\title[Algebraic maps constant on unpolarized isomorphism classes]{Algebraic maps constant on isomorphism classes of unpolarized abelian varieties are constant}
\author[E.\ Rains]{E.\ Rains}
\address{Department of Mathematics, California Institute of Technology, Pasadena, CA, USA}
\email{rains@caltech.edu}
\author[K.\ Rubin]{K.\ Rubin}
\address{Department of Mathematics, University of California, Irvine, CA 92697, USA}
\email{krubin@uci.edu}
\author[T.\ Scholl]{T.\ Scholl}
\address{Department of Mathematics, University of California, Irvine, CA 92697, USA}
\email{schollt@uci.edu}
\author[S.\ Sharif]{S.\ Sharif}
\address{Department of Mathematics, California State University San Marcos, San Marcos, CA 92096, USA}
\email{ssharif@csusm.edu}
\author[A.\ Silverberg]{A.\ Silverberg}
\address{Department of Mathematics, University of California, Irvine, CA 92697, USA}
\email{asilverb@uci.edu}
\keywords{abelian varieties, polarizations}
\thanks{We thank the Alfred P.~Sloan Foundation and the National Science Foundation, which provided support for the research, and the June 2019 ICERM workshop on Arithmetic of Low-Dimensional Abelian Varieties (funded by the Simons Collaboration on Arithmetic Geometry, Number Theory, and Computation), where this project began.}
\begin{document}

\begin{abstract}
We show that if a %morphism
rational map is constant on each isomorphism class of unpolarized abelian varieties of a given dimension, then it is a constant map.
Our results shed light on a question raised in \cite{multiparty} concerning a proposal for multiparty non-interactive key exchange.
\end{abstract}

%\today
\maketitle

%\tableofcontents

\section{Introduction}
\label{sec:introduction}

If $A_1$ and $A_2$ are abelian varieties, we say $A_1$ and $A_2$ are \emph{weakly isomorphic}, written $A_1 \approx A_2$, if $A_1$ and $A_2$ are isomorphic as unpolarized abelian varieties. Let $\ag$ denote the coarse moduli scheme of principally polarized abelian varieties of dimension $g$. One of our main results is the following.
\begin{theorem}\label{thm:invariant-c-constant}
  Suppose that $g\in\Z_{\ge 2}$, that $k$ is an algebraically closed field, and that $f: \ag \dashrightarrow X$ is a rational map of $k$-schemes. Suppose that $f(A_1,\lambda_1) = f(A_2,\lambda_2)$ whenever $A_1 \approx A_2$ and $(A_1,\lambda_1)$ and $(A_2,\lambda_2)$ are in the domain of definition of $f$. Then $f$ is a constant function.
\end{theorem}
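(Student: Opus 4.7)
The key difficulty is that the weak isomorphism class of a generic point of $\ag$ is a singleton: for a generic PPAV $(A,\lambda)$ one has $\mathrm{Aut}(A) = \{\pm 1\}$, which fixes every principal polarization, so the hypothesis is vacuous at generic points. Any proof must therefore amplify constraints coming from the special locus where $A$ has extra endomorphisms and admits several inequivalent principal polarizations, and then propagate this constraint to all of $\ag$ by rationality and density.

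The plan is to realize the weak isomorphism relation, on certain Shimura subvarieties of $\ag$, as a nontrivial rational correspondence, then combine enough such correspondences to force constancy. For a totally real field $F$ of degree $g$ with nontrivial narrow class group, the Hilbert modular subvariety $H_F \subset \ag$ of PPAVs with real multiplication by $\mathcal{O}_F$ splits, at the level of polarization data, into components indexed by narrow ideal classes; fixing the underlying abelian variety together with its $\mathcal{O}_F$-action, distinct narrow ideal classes produce inequivalent principal polarizations on the same $A$. Hence the narrow class group acts on $H_F$ by rational correspondences whose orbits lie in single weak isomorphism classes, forcing $f|_{H_F}$ to be invariant under this action. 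Iterating over all such $F$ (and similarly over CM fields $K$ of degree $2g$, using the Shimura class group on the corresponding zero-dimensional CM locus), and using Zariski-density of the union of these Shimura subvarieties in $\ag$, one hopes to conclude.

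The hardest step is bridging the gap from invariance under each individual class group action to global constancy of $f$. Each $H_F$ has dimension $g$, strictly less than $\dim \ag = g(g+1)/2$ for $g \ge 2$, and finite-group invariance on $H_F$ is weaker than constancy of $f|_{H_F}$. I would try to close this gap by combining identifications from overlapping Shimura subvarieties and Hecke-type isogeny correspondences, so that the resulting system of self-correspondences of $\ag$ has Zariski-dense orbits. A more direct alternative, which the authors may well adopt, is to exhibit, for each generic $(A,\lambda)$, an explicit positive-dimensional family of distinct but weakly isomorphic PPAVs through it, for instance by constructing abelian varieties of the form $A' \times A'$ and varying principal polarizations via unitary forms with respect to extra endomorphisms, or by exploiting symmetries of theta divisors on Jacobians admitting auxiliary automorphisms.
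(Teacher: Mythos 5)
Your proposal correctly identifies the central obstruction — a generic PPAV has trivial automorphisms, so its weak isomorphism class is a singleton and the hypothesis only bites on special loci — and correctly observes that the relation must be attacked via subvarieties parametrizing abelian varieties with extra structure. But the route you outline does not close, and you say so yourself. Finite class-group invariance of $f$ on a $g$-dimensional Hilbert modular subvariety $H_F$ of the $g(g+1)/2$-dimensional $\ag$ is far weaker than constancy of $f|_{H_F}$, and ``combine enough such correspondences so that the resulting system of self-correspondences of $\ag$ has Zariski-dense orbits'' is precisely the missing idea, not a technicality; you give no mechanism for it. As written, the argument records consequences of the hypothesis but does not prove the theorem.

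The paper's proof has a different shape. It first reduces Theorem~\ref{thm:invariant-c-constant} to a purely geometric density statement, Theorem~\ref{thm:sg-c-dense}: the set $\sg(k)$ of weakly isomorphic pairs is Zariski dense in $\ag\times\ag$. This reduction is a short formal argument via the fiber product $U\times_X U$, which is a closed subscheme of $U\times U$ containing $\sg(k)\cap(U\times U)(k)$; density then forces $U\times_X U = U\times U$, i.e.\ $f$ constant. The density statement is proved not with $g$-dimensional Hilbert modular subvarieties but with one-dimensional families $X_A=\psimod(Y_0(\det A))\subset\ag$, where $A$ is a symmetric positive definite integer matrix and $\psimod(E,C)$ is a principally polarized quotient of $E^g$. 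Two separate ingredients replace your ``dense orbits'' hope: a $q$-expansion and trace-minimization argument (Propositions~\ref{prop:unique-minimizer} and~\ref{prop:q-expansion}) shows $\bigcup_A X_A$ is Zariski dense in $\ag$, and a CM computation using Kani's classification of ideal subgroups (Proposition~\ref{prop:lim-degree}) shows that over suitable CM points the number of weakly isomorphic partners in $X_{A'}$ grows without bound, forcing $\sg\cap(X_A\times X_{A'})$ to be two-dimensional. Your ``direct alternative'' — a positive-dimensional family of distinct weakly isomorphic PPAVs through a generic point — is not what the paper does either: no such family through a generic point is exhibited, and density is extracted from a degree-growth argument concentrated on the CM locus inside the curves $X_A$.
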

Note that Theorem~\ref{thm:invariant-c-constant} fails when $g = 1$; the $j$-invariant gives a nonconstant morphism $\mathcal{A}_1 \to \Pro^1$ such that $j(E_1) = j(E_2)$ whenever $E_1 \approx E_2$.

% It suffices to prove Theorem \ref{thm:invariant-c-constant} when $R$ is an algebraically closed field, since a scheme over an algebraically closed field $k$ is also a scheme over every subring of $k$.

% Let $R$ be a scheme, and for an $R$-scheme $S$ write $S[R]$\textcolor{red}{TODO: is this necessary?} to be the set
% \[
%   \dlim_{T/R} S(T)
% \]
% where the limit is over all $R$-schemes $T$. Let
% \[
%   \sg: \rschemes \to \mathcal{P}((\ag \times \ag)[R])
% \]
% be the functor from $R$-schemes to the power set of $(\ag \times \ag)[R]$ that sends an $R$-scheme $T$ to the set
% \[
%   \sg(T) = \{(A,B) \in (\ag \times \ag)(T) | A \approx B\}.
% \]

If $R$ is a scheme, let
$$
\sg(R) = \{(A_1,A_2) \in (\ag \times \ag)(R) | A_1 \approx A_2\}.
$$
We will deduce Theorem \ref{thm:invariant-c-constant} from the following result.

\begin{theorem}\label{thm:sg-c-dense}
  If $g\in\Z_{\ge 2}$ and $k$ is an algebraically closed field, then the set $\sg(k)$ is Zariski dense in the $k$-scheme $\ag \times \ag$.
\end{theorem}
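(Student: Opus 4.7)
The plan is to prove Zariski density of $\sg(k)$ in $\ag \times \ag$ by exhibiting a sufficiently rich collection of pairs $((A_1,\lambda_1),(A_2,\lambda_2)) \in \sg$ and showing that the union of these pairs cannot be contained in any proper closed subvariety of $\ag \times \ag$. The primary source of such pairs is any abelian variety $A$ admitting two non-isomorphic principal polarizations $\lambda_1 \neq \lambda_2$, which immediately yields the pair $((A,\lambda_1),(A,\lambda_2)) \in \sg \setminus \Delta$.

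First I would identify abundant constructions of abelian varieties with multiple principal polarizations. Natural families include: (a) varieties of the form $A = B \times B$ for $B$ an abelian variety of lower dimension, whose endomorphism ring contains $M_2(\mathrm{End}(B))$ and which in general admit more than one iso class of principal polarization (e.g.\ the product polarization versus twists coming from $GL_2(\mathrm{End}(B))$); and (b) abelian varieties with complex multiplication by an order whose class group is nontrivial. For each such $A$ and each pair of non-isomorphic principal polarizations $\lambda_1,\lambda_2$ on $A$, one obtains a point of $\sg$ off the diagonal.

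Second I would let the underlying structure vary in a moduli-theoretic family: as $B$ ranges over $\mathcal{A}_{g/2}$, or as a CM abelian variety moves in its Shimura subvariety, the resulting pairs trace out algebraic subvarieties of $\ag \times \ag$ contained in $\sg$. Ranging over the countable collection of possible extra-endomorphism data (product decompositions, CM fields, orders) produces a countable family of such subvarieties, whose Zariski closure in $\ag \times \ag$ should be the entire product.

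The main obstacle is the density step. Each individual subvariety of pairs has dimension at most $\dim \ag = g(g+1)/2$, strictly less than $\dim(\ag \times \ag) = g(g+1)$, since the first projection $\sg \to \ag$ has finite fibers (the number of iso classes of principal polarizations on a fixed $A$ is finite). Density therefore hinges on the \emph{infinitude and spread} of these lower-dimensional families, analogous to how a countable union of lines through the origin is Zariski dense in the affine plane. Concretely, one must show that for any non-zero regular function $F$ on $\ag \times \ag$, some special $A$ and some pair $\lambda_1 \neq \lambda_2$ satisfy $F((A,\lambda_1),(A,\lambda_2)) \neq 0$. I would attempt this by invoking the Zariski density of CM points in $\ag$ together with the action of Hecke correspondences to produce, within any open subset, a specific pair with the required polarization structure, thereby defeating any proposed polynomial constraint $F$.
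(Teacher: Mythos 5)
Your proposal correctly identifies the structural difficulty: $\sg(k)$ is supported on a countable union of subvarieties of dimension at most $\dim\ag$, and density must come from the infinitude of the families rather than from the dimension of any single one. However, the density step --- showing that for any nonzero regular $F$ on $\ag\times\ag$ some weakly isomorphic pair $((A,\lambda_1),(A,\lambda_2))$ has $F\neq 0$ --- is exactly where the real content lies, and the tools you invoke do not obviously supply it. Hecke correspondences relate isogenous, not weakly isomorphic, abelian varieties; selecting from a Hecke orbit the points that are honestly isomorphic to the source as unpolarized varieties, and proving there are enough of them to escape every proper closed subscheme, is precisely the arithmetic input that is missing. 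Your plan names ``CM density plus Hecke action'' as the mechanism but does not explain how it produces points of $\sg$ rather than merely of an isogeny correspondence.

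The paper's argument is organized quite differently and is substantially more concrete. It builds maps $\psi_A: Y_0(\det A)\to\ag$, for $A$ a positive definite symmetric integer matrix of $\ell$-power determinant, whose images $X_A$ parametrize principally polarized varieties weakly isomorphic to products of elliptic curves (Lemma~\ref{lem:psimod-weakly-isomorphic-to-product}). Density of $\sg(k)$ then splits into two independent statements: (i) $\bigcup_A X_A$ is dense in $\ag$ (Theorem~\ref{thm:curves-dense}), proved via $q$-expansions of Siegel modular forms together with a positivity argument (Proposition~\ref{prop:unique-minimizer}) selecting a matrix $A$ on which $\tr(AQ)$ is uniquely minimized over the support of the $q$-expansion; and (ii) $\sg(k)$ is dense inside each surface $X_A\times X_{A'}$ (Theorem~\ref{thm:Sg-dense}), proved by producing, via CM elliptic curves and Kani's theory of kernel ideals, sequences $x_i$ for which the number of $y$ with $\psi_A(x_i)\approx\psi_{A'}(y)$ grows without bound (Proposition~\ref{prop:lim-degree}). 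Statement (ii) is the precise realization of the ``spread'' you appeal to: it rules out the Zariski closure of $\sg(k)\cap(X_A\times X_{A'})(k)$ being one-dimensional by an explicit unbounded-fiber-degree argument. Without an analogue of this and of the Kani isomorphism criterion (Theorem~\ref{thm:kani-20b}), your plan stalls at exactly the step you flag as ``the main obstacle.''
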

% \begin{theorem}\label{thm:sg-c-dense}
%   If $g\in\Z_{\ge 2}$, $R$ is a domain, $k$ is an algebraically closed field, and $R \subseteq k$, then the set $\sg(k)$ is Zariski dense in the $R$-scheme $\ag \times \ag$.
% \end{theorem}

It is widely acknowledged that in order to obtain a moduli space of abelian varieties, one must first fix a polarization.  However, the mere fact that isomorphism as unpolarized abelian varieties itself is not a well behaved equivalence relation does not {\it a priori} exclude the possibility that some slightly coarser equivalence relation {\em could} give rise to a moduli space. Theorem \ref{thm:sg-c-dense} implies that this cannot happen: the only coarser equivalence relation that is also Zariski closed is the trivial relation in which all elements are equivalent.
(Since the Zariski closure of an equivalence relation need not be an equivalence relation, Theorem \ref{thm:sg-c-dense} is somewhat stronger than this.) Furthermore, all of the results stated in this section hold even when $k$ is replaced by a geometrically reduced scheme; see Remark~\ref{rmk:replace-with-scheme}.
%Theorem~\ref{thm:invariant-c-constant}
Our results can be viewed as making precise the discussion in~\cite[p. 97]{git} about  ``pathologies'' that arise when considering the relation $\approx$.

We will prove the following result in \S\ref{sec:characteristic-p}.
\begin{theorem}\label{thm:arbit-char}
  Suppose that $g\in\Z_{\ge 2}$, $k$ is an algebraically closed field, and that $f: \ao^g \dashrightarrow X$ is a rational map of $k$-schemes. Suppose that $f(A_1,\lambda_1) = f(A_2,\lambda_2)$ whenever $A_1 \approx A_2$ and $(A_1,\lambda_1)$ and $(A_2,\lambda_2)$ are in the domain of definition of $f$. Then $f$ is a constant function.
\end{theorem}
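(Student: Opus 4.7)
The plan is to adapt the deduction of Theorem~\ref{thm:invariant-c-constant} from Theorem~\ref{thm:sg-c-dense} to the setting of $\ao^g$. Concretely, I would first establish an analog of Theorem~\ref{thm:sg-c-dense}: the set
\[
S := \{(A_1,A_2) \in (\ao^g \times \ao^g)(k) \mid A_1 \approx A_2\}
\]
is Zariski dense in $\ao^g \times \ao^g$ for every algebraically closed field $k$. Once that density is in hand, Theorem~\ref{thm:arbit-char} follows formally: let $U \subseteq \ao^g$ be the open domain of definition of $f$, and after further shrinking assume $X$ is separated so that $\Delta_X \subseteq X \times X$ is closed. Then $(f \times f)^{-1}(\Delta_X)$ is a closed subset of $U \times U$ containing the dense set $S \cap (U \times U)$, hence equals all of $U \times U$. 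So $f$ takes a single value on $U$ and is therefore constant.

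The real work is establishing density on $\ao^g$ in positive characteristic, where Theorem~\ref{thm:sg-c-dense} may not transfer directly. Assuming $\ao^g$ denotes the ordinary locus in characteristic $p$, the natural tool is Serre--Tate theory: every ordinary principally polarized abelian variety over an algebraically closed field of characteristic $p>0$ admits a canonical lift to characteristic zero that is functorial with respect to isogenies and compatible with principal polarizations. I would try either to transport the weakly isomorphic pairs produced by Theorem~\ref{thm:sg-c-dense} in characteristic zero back to the ordinary locus through specialization of the canonical lifts, or, equivalently, to reprove density directly on the ordinary locus by imitating the characteristic-zero constructions in Serre--Tate coordinates.

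The main obstacle is that weak isomorphism is neither open nor closed, and it need not be preserved under either specialization or lifting; this is precisely what makes passing to characteristic $p$ nontrivial. Overcoming it requires careful choice of lifts, exploiting the compatibility of the Serre--Tate canonical lift with isogenies and polarizations to control how weak-isomorphism classes behave in families. The restriction to $\ao^g$ is more than a convenience: on the supersingular (or more generally non-ordinary) locus the deformation theory is far less tractable, so one should not expect this approach to extend without substantial new input.
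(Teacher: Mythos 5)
Your formal reduction is fine: if the weakly--isomorphic locus is Zariski dense in $\ao^g\times\ao^g$, then the closedness of $U\times_X U$ (equivalently, of $(f\times f)^{-1}(\Delta_X)$ for $X$ separated) forces $f$ to be constant, exactly as in the paper's deduction of Theorem~\ref{thm:invariant-c-constant} from Theorem~\ref{thm:sg-c-dense}.

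However, the rest of the proposal rests on a misreading of the statement. Here $\ao^g$ is not the ordinary locus of $\ag$ in characteristic $p$; it is the $g$-fold self-product $\mathcal{A}_1^g$ of the coarse moduli space of elliptic curves, parametrizing ordered $g$-tuples of elliptic curves (equivalently, products $E_1\times\cdots\times E_g$ with the product principal polarization). This is visible throughout \S\ref{sec:characteristic-p}: the maps $\psidiag\colon Y_0(N)\to\ao^g$ send $(E,C)$ to a tuple of elliptic curves, and the proof of Theorem~\ref{thm:curves-dense-a1} concludes by observing that an infinite set of pairwise non-isomorphic elliptic curves is dense in $\ao$, hence its $g$-th power is dense in $\ao^g$. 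Consequently there is no ``positive characteristic obstruction'' of the kind you envision: Theorem~\ref{thm:sg-c-dense} is already proved for every algebraically closed field, and Serre--Tate canonical lifts, specialization of weak-isomorphism classes, and the ordinary-versus-supersingular dichotomy play no role.

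The genuine reason Theorem~\ref{thm:arbit-char} needs its own argument is different and you do not address it: density of $\sg(k)$ in $\ag\times\ag$ does not formally give density of the analogous set in $\ao^g\times\ao^g$, because for $g\ge2$ the natural map $\ao^g\to\ag$ (take the product with the product polarization) has image a proper closed subvariety of $\ag$. The paper fixes this by noting that for diagonal $A$ the map $\psimod$ factors through $\psidiag\colon Y_0(\det A)\to\ao^g$, and then proving direct analogs of the two density statements on $\ao^g$: Theorem~\ref{thm:curves-dense-a1} (the images $X^{(1)}_A$ for $A\in W_{\ell,g}$ are dense in $\ao^g$) and Theorem~\ref{thm:Sg-dense-a1} (the weakly-isomorphic pairs are dense in $X^{(1)}_A\times X^{(1)}_{A'}$, with the same CM construction as Proposition~\ref{prop:lim-degree}). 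To repair your proposal you should discard the Serre--Tate plan entirely and instead establish these two analogs, restricting the matrices $A$ in Theorems~\ref{thm:Sg-dense} and~\ref{thm:curves-dense} to diagonal ones so that the construction lands in $\ao^g$ rather than merely in $\ag$.
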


% \begin{theorem}\label{thm:arbit-char}
%   Suppose that $g\in\Z_{\ge 2}$, $R$ is a domain, and $f: \ao^g \dashrightarrow X$ is a rational map of $R$-schemes. Suppose $f(A_1,\lambda_1) = f(A_2,\lambda_2)$ whenever $A_1 \approx A_2$ and $(A_1,\lambda_1)$ and $(A_2,\lambda_2)$ are in the domain of definition of $f$. Then $f$ is a constant function.
% \end{theorem}

A motivation behind Theorem \ref{thm:arbit-char} is the proposal for a construction of a cryptographic protocol in~\cite{multiparty}. In that protocol, $n$ parties each construct a product of elliptic curves over a finite field such that any pair of products is isomorphic. Put another way, each party computes a point of $\ao^g$ so that the chosen points are weakly isomorphic to each other. However, the proposal for a protocol was incomplete. The open question in~\cite{multiparty} was whether one can extract a numerical invariant of the product of elliptic curves that respects weak isomorphism, that is, a non-constant map $f: \ao^g \to X$ for a suitable space $X$ such that $f(A_1) = f(A_2)$ whenever $A_1 \approx A_2$. In this context, Theorem~\ref{thm:arbit-char} shows that if $f$ is algebraic, then it is constant, and thus not useful cryptographically. It remains an open problem whether there is a useful non-algebraic invariant of (isomorphism classes of non-polarized) products of elliptic curves.

Since $\ag$ is only a coarse moduli space, one might wonder whether Theorem~\ref{thm:invariant-c-constant} still holds if $\ag$  is replaced by the fine moduli space of abelian varieties with full level $n$ structure with $n\ge 3$ and not divisible by $\car(k)$. The following generalization of
Theorem~\ref{thm:invariant-c-constant} shows that the phenomenon persists even for the fine moduli space.

\begin{corollary}\label{cor:agn}
  Let $g,n \in \Z_{\geq 2}$, and let $\agn$ be the moduli space of principally polarized abelian varieties with full level $n$ structure. Suppose that $k$ is an algebraically closed field with $\car(k) \nmid n$,  and $f: \agn \dashrightarrow X$ is a rational map of $k$-schemes. View points of $\agn$ as triples  $(A, \lambda,L)$, where $A$ is an abelian variety with principal polarization $\lambda$, and $L$ is a symplectic basis for the $n$-torsion $A[n]$ (where symplectic means with respect to the Weil pairing induced by $\lambda$). Suppose $f(A_1,\lambda_1, L_1) = f(A_2,\lambda_2, L_2)$ whenever $A_1 \approx A_2$ and $(A_1,\lambda_1, L_1)$ and $(A_2,\lambda_2, L_2)$ are in the domain of definition of $f$. Then $f$ is a constant function.
\end{corollary}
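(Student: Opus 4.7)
The plan is to descend $f$ along the forgetful map $\pi\colon\agn\to\ag$ to a rational map $\bar f$ on $\ag$, and then invoke Theorem~\ref{thm:invariant-c-constant}. Two points of $\agn$ lying over the same point of $\ag$ correspond to the same principally polarized abelian variety equipped with different symplectic level-$n$ structures, and any two such level structures are related by the natural action of $G:=\mathrm{Sp}_{2g}(\Z/n\Z)$ on $\agn$ (since $G$ acts transitively on symplectic bases of $A[n]$).

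The hypothesis on $f$ immediately implies that $f$ is $G$-invariant where defined: if $(A,\lambda,L)$ and $(A,\lambda,L')$ both lie in the domain of $f$, then the trivial isomorphism $A\approx A$ forces $f(A,\lambda,L)=f(A,\lambda,L')$. Let $U\subseteq\agn$ denote the domain of $f$, and set $V:=\bigcap_{\sigma\in G}\sigma\cdot U$. Since $\agn$ is geometrically irreducible under our hypothesis on $\car(k)$ and $G$ is finite, $V$ is a $G$-stable dense open subscheme of $\agn$ on which $f$ is defined and $G$-invariant in the strong sense.

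By descent for finite group quotients of quasi-projective schemes, $f|_V$ factors uniquely through the quotient map $V\to V/G$, which may be identified with the open subscheme $\pi(V)\subseteq\ag$ since the fibers of $\pi$ over geometric points are precisely the $G$-orbits on $\agn$. This yields a rational map $\bar f\colon\ag\dashrightarrow X$, which inherits the weak-isomorphism hypothesis of Theorem~\ref{thm:invariant-c-constant}: given $(A_i,\lambda_i)\in\pi(V)$ with $A_1\approx A_2$, pick any lifts $(A_i,\lambda_i,L_i)\in V$; then $\bar f(A_i,\lambda_i)=f(A_i,\lambda_i,L_i)$, and the hypothesis on $f$ gives $\bar f(A_1,\lambda_1)=\bar f(A_2,\lambda_2)$.

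Applying Theorem~\ref{thm:invariant-c-constant} to $\bar f$ yields that $\bar f$ is constant, hence $f=\bar f\circ\pi$ is constant as well. The main technical obstacle is the descent step in the third paragraph: one must verify that $\agn/G$ really is $\ag$ (at least birationally) and that a $G$-invariant morphism on an open subset of $\agn$ descends to the quotient. Both points follow from standard facts about $\agn$ as a moduli space together with the representability of finite group quotients on quasi-projective schemes, but one has to be careful about which group acts faithfully (for $n=2$, the element $-I$ acts trivially, so the effective group is a quotient of $G$ — this does not affect the descent argument).
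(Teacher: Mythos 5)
Your proof is correct, and it takes a genuinely cleaner route than the paper's. You observe that the hypothesis, applied to the trivial isomorphism $A\approx A$, already forces $f$ to be constant on fibers of the forgetful map $\pi\colon\agn\to\ag$, i.e.\ $f$ is $G$-invariant; you then shrink the domain to a $G$-stable open $V$ and descend $f$ through the finite quotient $V\to V/G\subseteq\ag$ to obtain $\bar f$, to which Theorem~\ref{thm:invariant-c-constant} applies directly. The paper does not exploit this $G$-invariance: instead, for each $u\in\pi(V)$ it records the entire tuple of values $(f(g(v)))_{g\in G}$ over the fiber, packages it as a point of $X^G/G$, and applies Theorem~\ref{thm:invariant-c-constant} to the resulting map $f_G\colon U\to X^G/G$; it then needs a final step (connectedness of $\agn$) to pass from ``$f_G$ constant'' back to ``$f$ constant.'' That construction would work even if $f$ were not fiberwise constant, so it is more robust, but here it is overkill: since $f$ is $G$-invariant, the paper's $f_G$ lands in the diagonal copy of $X$ inside $X^G/G$, and unwinding that identification recovers your $\bar f$. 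Your version avoids the auxiliary space $X^G/G$ and the final connectedness argument, at the cost of invoking descent of morphisms along a finite group quotient and the identification (up to shrinking) of $\agn/G$ with $\ag$; both are standard, and your remark about $-I$ acting trivially (so the effective group is $\mathrm{Sp}_{2g}(\Z/n\Z)/\{\pm I\}$ for $n>2$) is the right caveat and, as you note, harmless. One small bookkeeping point you handle and the paper elides: one must indeed replace the domain of definition by $V=\bigcap_{\sigma\in G}\sigma\cdot U$ to ensure every point in a fiber over $\pi(V)$ is in the domain of $f$.
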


% \begin{corollary}\label{cor:agn}
%   Let $g,n \in \Z_{\geq 2}$, and let $\agn$ be the moduli space of principally polarized abelian varieties with full level $n$ structure. Suppose that $R$ is a domain with $n$ invertible in $R$, and $f: \agn \dashrightarrow X$ is a rational map of $R$-schemes. View points of $\agn$ as triples  $(A, \lambda,L)$, where $A$ is an abelian variety with principal polarization $\lambda$, and $L$ is a symplectic basis for the $n$-torsion $A[n]$ (where symplectic means with respect to the Weil pairing induced by $\lambda$). Suppose $f(A_1,\lambda_1, L_1) = f(A_2,\lambda_2, L_2)$ whenever $A_1 \approx A_2$ and $(A_1,\lambda_1, L_1)$ and $(A_2,\lambda_2, L_2)$ are in the domain of definition of $f$. Then $f$ is a constant function.
% \end{corollary}

Suppose $k$ is an algebraically closed field. If $\mu: A \to A^\vee$ is a polarization on a $g$-dimensional abelian variety $A$ over $k$ of degree not divisible by $\car(k)$, then there are unique positive integers $d_1 \mid \cdots \mid d_g$ such that
\[
  \ker \mu \cong (\Z/d_1\Z \times \cdots \times \Z/d_g\Z)^2
\]
and $\car(k) \nmid d_g$ (see \cite[Theorem 1 et seq.]{mumford1966}). Letting $D = (d_1, \ldots, d_g)$, we say that the polarization $\mu$ has \emph{type} $D$.
A \emph{polarization type} is a tuple ${D} = (d_1, \ldots, d_g)$ where the $d_i$ are positive integers such that $d_1 \mid \cdots \mid d_g$. If $D$ is a polarization type, let $\agd$ denote the moduli space of abelian varieties with a polarization of type ${D}$. View points of $\agd$ as pairs $(A,\mu)$, where $A$ is an abelian variety and $\mu$ is a polarization of type ${D}$.

The following result, which we prove in \S\ref{sec:Cor15proof} below, is a generalization of Theorem \ref{thm:invariant-c-constant} with $\ag$ replaced by the moduli space of  abelian varieties with a fixed %non-principal
polarization type.

\begin{corollary}\label{cor:agpol}
  Suppose that $g \in \Z_{\geq 2}$, ${D} = (d_1, \ldots, d_g)$ is a polarization type, $k$ is an algebraically closed field, $\car(k) \nmid d_g$, and  $f: \agd \dashrightarrow X$ is a rational map of $k$-schemes. Suppose $f(A_1, \mu_1) = f(A_2, \mu_2)$ whenever $A_1 \approx A_2$ and $(A_1,\mu_1)$ and $(A_2,\mu_2)$ are in the domain of definition of $f$. Then $f$ is a constant function.
\end{corollary}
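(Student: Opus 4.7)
The plan is to mirror the deduction of Theorem~\ref{thm:invariant-c-constant} from Theorem~\ref{thm:sg-c-dense}. First I reduce Corollary~\ref{cor:agpol} to the analogous density statement, namely that the weak-isomorphism locus
\[
S_{g,D}(k):=\bigl\{((A_1,\mu_1),(A_2,\mu_2))\in\agd(k)^2:A_1\approx A_2\bigr\}
\]
is Zariski dense in $\agd\times\agd$. Granted this density, the rational map $(f,f)\colon\agd\times\agd\dashrightarrow X\times X$ sends $S_{g,D}$ into the closed diagonal $\Delta_X$ by hypothesis, hence lands in $\Delta_X$ on the entire source, forcing $f$ to be constant.

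I would establish the density of $S_{g,D}$ by bootstrapping from that of $S_g$ via an isogeny construction. Because $\car(k)\nmid d_g$, for each $(B,\lambda)\in\ag$ the $d_g$-torsion $B[d_g]$ is étale, and every subgroup $K\subset B[d_g]$ of abstract isomorphism type $\Z/d_1\times\cdots\times\Z/d_g$ satisfying the appropriate isotropy condition for the Weil pairing induced by $\lambda$ yields a point $(B/K,\mu)\in\agd$, with $\mu$ the descended polarization of type $D$. The crucial observation is that this construction is equivariant under weak isomorphism: given an unpolarized isomorphism $\sigma\colon B_1\to B_2$ and an admissible $K_1\subset B_1[d_g]$, the transported subgroup $K_2:=\sigma(K_1)\subset B_2[d_g]$ has the same abstract type and is therefore also admissible, and $\sigma$ descends to an unpolarized isomorphism $B_1/K_1\cong B_2/K_2$. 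Hence the construction sends weakly-isomorphic pairs in $\ag\times\ag$ to weakly-isomorphic pairs in $\agd\times\agd$; notably, no compatibility between $\sigma$ and the Weil pairings of the $\lambda_i$ is needed, since the type of the induced polarization depends only on the abstract group structure of the subgroup.

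To close the argument, I would form the parameter scheme $\mathcal P$ of tuples $(B_1,\lambda_1,B_2,\lambda_2,\sigma,K_1)$ with $(B_i,\lambda_i)\in\ag$, $\sigma$ an unpolarized isomorphism $B_1\to B_2$, and $K_1\subset B_1[d_g]$ admissible, and consider the natural morphism $\Phi\colon\mathcal P\to\agd\times\agd$, whose image lies in $S_{g,D}$ by the equivariance above. Every $(A,\mu)\in\agd$ arises from this isogeny construction applied to some $(B,\lambda,K)$ where $B=A/H$ for a Lagrangian $H\subset\ker\mu$, so the correspondence $(B,\lambda,K)\mapsto(B/K,\mu)$ is dominant from a finite étale cover of $\ag$ onto $\agd$. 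Combining this dominance, the fact that dominant morphisms between irreducible varieties carry dense subsets to dense subsets, and the density of $S_g$ in $\ag\times\ag$ (Theorem~\ref{thm:sg-c-dense}), one obtains density of $\Phi(\mathcal P)$, and hence of $S_{g,D}$, in $\agd\times\agd$.

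The main obstacle will be making this moduli-theoretic setup rigorous: constructing $\mathcal P$ as a scheme of the expected dimension, verifying that the admissible subgroups of $B[d_g]$ form a nonempty finite étale cover of $\ag$, and confirming the dominance of the isogeny correspondence on each irreducible component. The hypothesis $\car(k)\nmid d_g$ is essential here, ensuring both the étaleness of the relevant subgroup schemes and the non-degeneracy of the Weil pairings used throughout.
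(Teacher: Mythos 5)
Your opening reduction of the corollary to the density of $\sgd(k)$ in $\agd\times\agd$ is correct and matches the paper's strategy. The gap is in your ``crucial observation'' of equivariance. Admissibility of a subgroup $K\subset B[d_g]$ is the condition that $K$ be isotropic for the Weil pairing induced by the principal polarization $\lambda$; it is \emph{not} an abstract group-theoretic property of $K$. An unpolarized isomorphism $\sigma\colon B_1\to B_2$ does not respect the Weil pairings (indeed $\sigma^*\lambda_2\neq\lambda_1$ in general), so $K_1$ being $\lambda_1$-isotropic gives no control over whether $K_2:=\sigma(K_1)$ is $\lambda_2$-isotropic. (When $K$ is cyclic, e.g.\ for $D=(1,\ldots,1,d)$, isotropy is automatic for an alternating pairing, but for $D=(d,\ldots,d)$ with $g\geq 2$ it is a genuine constraint.) Without admissibility, $\lambda_2$ does not descend to a type-$D$ polarization on $B_2/K_2$, so your map $\Phi$ has no well-defined second output. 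The natural repair --- transport $\mu_1$ across the isomorphism $B_1/K_1\cong B_2/K_2$ induced by $\sigma$ --- only produces the same point of $\agd$ twice, so the image of $\Phi$ lands in the diagonal rather than densely in $\agd\times\agd$.

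The paper sidesteps exactly this obstruction by routing through the fine moduli space $\agn$ and a stricter equivalence $\medium$ that demands the weak isomorphism also carry one level structure to the other. Because the morphism $\pi_D\colon\agn\to\agd$ of de Jong encodes the auxiliary subgroup $H\subset A^\vee[n]$ in terms of the level structure $L$, a $\medium$-isomorphism automatically carries $H_1$ onto $H_2$, and the resulting quotients are weakly isomorphic; no transport of admissible subgroups along arbitrary unpolarized isomorphisms is ever needed. Density of $\mg(k)$ is then established by a dimension count against $\ag$ (Proposition~\ref{prop:medium-isomorphic}), and density of $\sgd(k)$ follows from finiteness of $\pi_D\times\pi_D$ together with irreducibility of $\agd$ (Proposition~\ref{prop:whatever}). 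If you want to pursue your direct approach you would need to identify some genuine substitute for this bookkeeping, since the admissibility condition cannot simply be asserted to transfer.
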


\begin{corollary}
\label{cor:alldcor}
  Suppose that $g\in\Z_{\ge 2}$, that $k$ is an algebraically closed field, and that $f: \cup_{D} \agd \to X$ is a morphism of $k$-schemes, where $D$ runs over all polarization types $(d_1, \ldots, d_g)$ for which $\car(k) \nmid d_g$. Suppose that $f(A_1,\lambda_1) = f(A_2,\lambda_2)$ whenever $A_1 \weak A_2$. Then $f$ is a constant function.
\end{corollary}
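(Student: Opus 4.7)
My plan is to combine Corollary~\ref{cor:agpol} with a direct construction linking different polarization types through a common underlying abelian variety. Since $f$ is a morphism on $\cup_D\agd$, its restriction to each $\agd$ is in particular a rational map, and the weak-isomorphism hypothesis restricts to pairs inside a fixed $\agd$. Corollary~\ref{cor:agpol} therefore yields, for each admissible polarization type $D=(d_1,\ldots,d_g)$ (admissible meaning $\car(k)\nmid d_g$), a single value $c_D\in X(k)$ with $f|_{\agd}\equiv c_D$. The entire statement reduces to proving $c_D=c_{D'}$ for any two admissible types.

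To compare the $c_D$, I would exhibit a single abelian variety $A/k$ that carries a polarization of type $D$ for every admissible $D$. Fix any elliptic curve $E$ over $k$ and set $A=E^g$ with its canonical product principal polarization $\lambda_0$. Using $\lambda_0$ to identify $\mathrm{Hom}(A,A^\vee)$ with $M_g(\mathrm{End}(E))$, every symmetric positive-definite matrix $M\in M_g(\Z)\subset M_g(\mathrm{End}(E))$ gives a polarization $\mu_M:=\lambda_0\circ M$ on $A$, since integer scalars are fixed by the Rosati involution attached to $\lambda_0$. Taking $M_D:=\mathrm{diag}(d_1,\ldots,d_g)$, the polarization $\mu_{M_D}$ has kernel $E[d_1]\times\cdots\times E[d_g]$; since $d_i\mid d_g$ and $\car(k)\nmid d_g$, each $d_i$ is invertible in $k$, and this kernel is isomorphic to $(\Z/d_1\Z\times\cdots\times\Z/d_g\Z)^2$, confirming that $\mu_{M_D}$ has type $D$. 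Thus $(A,\mu_{M_D})\in\agd(k)$ for every admissible $D$.

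Given any two admissible types $D$ and $D'$, the pairs $(A,\mu_{M_D})$ and $(A,\mu_{M_{D'}})$ share the same underlying variety $A=E^g$, so $A\weak A$ trivially. The hypothesis on $f$ then gives $f(A,\mu_{M_D})=f(A,\mu_{M_{D'}})$, hence $c_D=c_{D'}$, and $f$ is globally constant on $\cup_D\agd$.

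I do not expect any serious obstacle: once Corollary~\ref{cor:agpol} is in hand, the only computation of substance is the identification of $\ker\mu_{M_D}$, which reduces to the standard structure of the prime-to-$\car(k)$ torsion of $E$. The construction is completely uniform in $k$, so the mild characteristic hypothesis built into ``admissible'' is exactly what is needed.
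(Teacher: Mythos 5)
Your proposal is correct and follows essentially the same route as the paper: reduce to Corollary~\ref{cor:agpol} to get constancy on each $\agd$, then take $E^g$ with polarizations of the two types and apply the hypothesis to the trivial weak isomorphism $E^g \weak E^g$. The only difference is that you spell out explicitly (via $\mu_{M_D}=\lambda_0\circ M_D$ with $M_D=\mathrm{diag}(d_1,\ldots,d_g)$) why $E^g$ carries a polarization of every admissible type, a detail the paper leaves implicit.
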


% \begin{corollary}\label{cor:agpol}
%   Suppose that $g \in \Z_{\geq 2}$, ${D} = (d_1, \ldots, d_g)$ is a polarization type, $R$ is a domain, $d_g$ is invertible in $R$, and  $f: \agd \dashrightarrow X$ is a rational map of $R$-schemes. Suppose $f(A_1, \mu_1) = f(A_2, \mu_2)$ whenever $A_1 \approx A_2$ and $(A_1,\mu_1)$ and $(A_2,\mu_2)$ are in the domain of definition of $f$. Then $f$ is a constant function.
% \end{corollary}

% \begin{corollary}
% \label{cor:alldcor}
%   Suppose that $g\in\Z_{\ge 2}$, $R$ is a domain, and $f: \cup_{D} \agd \to X$ is a morphism of $R$-schemes, where $D$ runs over all polarization types $(d_1, \ldots, d_g)$ for which $d_g$ is invertible in $R$. Suppose $f(A_1,\lambda_1) = f(A_2,\lambda_2)$ whenever $A_1 \weak A_2$. Then $f$ is a constant function.
% \end{corollary}

We next discuss the proofs.
If $N$ is a positive integer, let $Y_0(N)/\Spec(\Z[1/N])$ denote the modular curve parametrizing pairs $(E, C)$, where $E$ is an elliptic curve and $C \subset E$ is a cyclic subgroup of $E$ of order $N$. If $A$ is a positive definite, symmetric, integer $g\times g$ matrix, define a map $\psimod: Y_0(\det(A)) \to \ag$ as follows. Suppose $(E, C) \in Y_0(\det(A))$. The matrix $A$ induces a natural endomorphism $\lambda_A$ of $E^g$. Since $A$ is symmetric and positive definite, we can view $\lambda_A$ as a polarization on $E^g$. Let $B = E^g/((\ker \lambda_A) \cap C^g)$ and let $\pi: E^g \to B$ be the quotient map. Then by~\cite[Prop.~16.8]{milne-av} there is a unique polarization $\lambda$ on $B$ such that $\pi^*(\lambda) = \lambda_A$. As we shall see in Lemma~\ref{lem:psimod-weakly-isomorphic-to-product}, $\lambda$ is a principal polarization, and $B$ is weakly isomorphic to a product of elliptic curves.  Define
\begin{equation}\label{def:psimoddef}
\psimod: Y_0(\det(A)) \to \ag  \text{ by } \psimod(E,C) = (B,\lambda).
\end{equation}
%by sending $(E,C) \in Y_0(N)$ to the abelian variety $B$ with principal polarization $\lambda$, and l
For another viewpoint on the maps $\psimod$, see \cite[p. 19 et seq.]{rains}. In Prop.~\ref{prop:psimod-over-c} below we obtain a more concrete characterization of $\psimod$ in the case where the ground field is the field of complex numbers. 

Let
%\begin{equation}\label{def:XAdef}
$$X_A=\psimod(Y_0(\det(A))).$$
%\end{equation}

\begin{definition}\label{def:detl}
If $\ell$ is a prime number and $g$ is a positive integer, let $\detl$ denote the set of positive definite symmetric $g \times g$ integer matrices whose determinant is a power of $\ell$.
\end{definition}

\begin{theorem}\label{thm:Sg-dense}
  Suppose $g\in\Z_{\ge 2}$ and $k$ is an algebraically closed field.
  Then there are infinitely many prime numbers $\ell$ such that if
  $A,A' \in \detl$, then $\sg(k) \cap (X_A \times X_{A'})(k)$ is Zariski dense in $X_A \times X_{A'}$.
\end{theorem}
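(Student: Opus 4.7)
The plan is to reduce the density of $\sg(k) \cap (X_A \times X_{A'})(k)$ in $X_A \times X_{A'}$ to a density statement on the product of modular curves $Y_0(\det A) \times Y_0(\det A')$, and then use complex multiplication to produce enough non-trivial weakly isomorphic pairs to give the density.

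First, by Lemma~\ref{lem:psimod-weakly-isomorphic-to-product}, $\psimod(E, C)$ is weakly isomorphic to a product of elliptic curves isogenous to $E$; for non-CM $E$ this is just $E^g$. Since $\psimod_A : Y_0(\det A) \to X_A$ is surjective, density of a subset of $X_A \times X_{A'}$ follows from density of its preimage under $\psimod_A \times \psimod_{A'}$. So it suffices to show that
\[
  W = \{((E, C), (E', C')) : \psimod(E,C) \approx \psimod(E',C')\}
\]
is Zariski dense in the $2$-dimensional variety $Y_0(\det A) \times Y_0(\det A')$. The ``diagonal'' contribution from $E \cong E'$ alone is only $1$-dimensional, so I must produce non-diagonal pairs as well.

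These non-diagonal pairs come from CM theory. For CM elliptic curves $E, E'$ with common endomorphism order $\mathcal{O}$ in an imaginary quadratic field $K$ and ideal classes $[\mathfrak{a}], [\mathfrak{b}]$, the weak isomorphism $E^g \approx (E')^g$ is equivalent to $[\mathfrak{a}\mathfrak{b}^{-1}] \in \mathrm{Pic}(\mathcal{O})[g]$. By Nagell's theorem there are infinitely many imaginary quadratic fields $K$ with $g \mid h(K)$, each of which furnishes non-trivial pairs $(E, E')$ with $E \not\cong E'$ but $E^g \approx (E')^g$. For these CM curves to lift to $Y_0(\det A)$ and $Y_0(\det A')$ (carrying cyclic subgroups of orders $\ell^a$ and $\ell^{a'}$), it suffices that $\ell$ split in $K$, so that primes above $\ell$ in $\mathcal{O}$ supply the required $\ell$-power isogenies. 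Applying a Chebotarev-type count across the infinite family of such fields $K$, there are infinitely many primes $\ell$ realizing this condition simultaneously in infinitely many $K$, and these are the primes claimed by the theorem.

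The main obstacle is then the Zariski density of this countable collection of CM pairs in the $2$-dimensional variety $Y_0(\det A) \times Y_0(\det A')$. Any proper closed subvariety $Z$ decomposes into a finite union of curves and points. Fiber-type curves $\{p\} \times Y_0(\det A')$ or $Y_0(\det A) \times \{q\}$ cannot contain all of $W$, since varying the CM order $\mathcal{O}$ produces infinitely many valid $E'$ for each fixed $E$ (and conversely). For irreducible curves $Z$ dominating both factors, I would invoke either an Andr\'e--Oort type rigidity result, or a direct argument using the Galois action of $\mathrm{Pic}(\mathcal{O})$ on CM points: as $\mathcal{O}$ varies over infinitely many orders with $g \mid h(\mathcal{O})$, the Galois orbits of corresponding CM pairs grow without bound in degree and cannot all lie on a single fixed positive-dimensional proper subvariety of the product of modular curves. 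Making this density argument go through uniformly for every $A, A' \in \detl$, and hence for an explicit infinite set of primes $\ell$, is the crux.
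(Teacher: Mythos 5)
Your reduction to density in $Y_0(\det A)\times Y_0(\det A')$ is fine, and you correctly identify that CM points are the source of weakly isomorphic pairs and that Zariski density of a countable set of CM pairs in a surface is the crux. However, you then explicitly leave the crux unresolved, and the two routes you sketch both have problems.

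The Andr\'e--Oort route is available in characteristic zero (Andr\'e's theorem on $Y(1)\times Y(1)$) but not, in any form you could cite, over $\overline{\F}_p$; the theorem must hold for every algebraically closed $k$. The Galois-orbit route is not correct as stated: the putative proper closed $Z\subset Y_0(\det A)\times Y_0(\det A')$ need not be defined over a number field, and even when it is, a fixed curve can of course contain Galois orbits of arbitrarily large degree, so unbounded orbit size alone yields no contradiction without a degree-of-projection argument. There is also a smaller slip: $\psimod(E,C)$ is not $E^g$ but $E_1\times\cdots\times E_g$ with the $E_i$ distinct $\ell$-power-isogenous quotients of $E$ determined by the elementary divisors of $A$ (Lemma~\ref{lem:psimod-weakly-isomorphic-to-product}), so the class-group criterion you want is not ``$[\mathfrak{a}\mathfrak{b}^{-1}]\in\mathrm{Pic}(\mathcal{O})[g]$'' applied to $E^g\approx(E')^g$, but a statement about products of non-isomorphic curves as in Proposition~\ref{prop:prod-equiv-torsor}.

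The missing idea, and what the paper actually does, is to \emph{fix} one CM field $K$ (and one curve $E$ with $\End(E)\cong\sO_K$) and vary the non-maximal order $\Z+q_1\cdots q_i\sO_K$ by quotienting $E$ by cyclic subgroups of growing order $q_1\cdots q_i$, with the $q_i$ inert primes chosen via Lemma~\ref{lem:K-exists}. Proposition~\ref{prop:prod-equiv-torsor} then shows that for each fixed $x_i\in Y_0(\ell^n)$ the number of $y\in Y_0(\ell^m)$ with $\psi_A(x_i)\approx\psi_{A'}(y)$ is exactly $g^i$, which is unbounded (Proposition~\ref{prop:lim-degree}; this is the unique place $g\ge 2$ is used). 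A one-dimensional Zariski closure $S$ would, after discarding finitely many horizontal components and isolated points, project to $X_A$ with finite degree, contradicting these unbounded fibers. This ``unbounded fiber over a single point'' observation is the engine of the proof; your proposal replaces it with a density-of-special-points heuristic that would require substantial new argument (and fails in positive characteristic), so as written there is a genuine gap.
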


\begin{theorem}\label{thm:curves-dense}
    Suppose $\ell$ is a prime number, $g\in\Z_{\ge 1}$, and $k$ is an algebraically closed field with $\car(k) \neq \ell$. Then as $k$-schemes, $\bigcup_{A \in \detl} X_A$ is Zariski dense in $\ag$.
\end{theorem}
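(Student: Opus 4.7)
The plan is to handle the case $k = \mathbb{C}$ directly using the Siegel uniformization and strong approximation for $\mathrm{Sp}_{2g}$, and then deduce the general case by spreading out.  Over $\mathbb{C}$ we have $\ag(\mathbb{C}) = \mathrm{Sp}_{2g}(\Z) \backslash \mathcal{H}_g$, where $\mathcal{H}_g$ is the Siegel upper half space, and I will use the concrete complex-analytic description of $\psimod$ provided by Proposition~\ref{prop:psimod-over-c}.

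By that proposition, for each $A \in \detl$ the preimage $\widetilde{X}_A \subseteq \mathcal{H}_g$ of $X_A$ is a one-dimensional subvariety described explicitly in terms of $A$; for $A = I_g$ it is the diagonal $\{\tau I_g : \tau \in \mathcal{H}_1\}$.  The key step is to show that, as $A$ varies over $\detl$, the $\widetilde{X}_A$ are obtained from $\widetilde{X}_{I_g}$ by the action of $\mathrm{Sp}_{2g}(\Z[1/\ell])$ on $\mathcal{H}_g$.  Morally, the isogeny $\pi \colon E^g \to B = E^g/((\ker \lambda_A) \cap C^g)$ has $\ell$-power degree, so the induced symplectic change of basis on the rational Tate module at each prime $p \neq \ell$ preserves the integral structure; together with the principality of $\lambda$ and the relation $\pi^\ast \lambda = \lambda_A$, this places the change of basis inside $\mathrm{Sp}_{2g}(\Z[1/\ell])$.

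Granting this identification, density follows from strong approximation for $\mathrm{Sp}_{2g}$: removing the non-compact place $v = \ell$ shows that $\mathrm{Sp}_{2g}(\Z[1/\ell])$ is dense in $\mathrm{Sp}_{2g}(\mathbb{R})$.  Since $\mathrm{Sp}_{2g}(\mathbb{R})$ acts transitively on $\mathcal{H}_g$, for any nonempty open $U \subseteq \mathcal{H}_g$ and any $x_0 \in \widetilde{X}_{I_g}$ one first finds $h \in \mathrm{Sp}_{2g}(\mathbb{R})$ with $h \cdot x_0 \in U$ by transitivity, then approximates $h$ by some $\gamma \in \mathrm{Sp}_{2g}(\Z[1/\ell])$ so that $\gamma \cdot x_0 \in U$.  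Thus $\bigcup_A \widetilde{X}_A$ is analytically, and hence Zariski, dense in $\mathcal{H}_g$, so its image $\bigcup_A X_A$ is Zariski dense in $\ag(\mathbb{C})$.

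Finally, for $\car(k) = p \neq \ell$: the schemes $\ag$ and $Y_0(\det A)$ and the morphisms $\psimod$ are defined over $\Spec \Z[1/\ell]$.  If some proper closed subscheme $Z \subsetneq \ag_k$ contained $\bigcup X_A$ in characteristic $p$, one could spread $Z$ out over an open subscheme of $\Spec \Z[1/\ell p]$ and pass to the $\mathbb{C}$-fiber to produce a proper closed subscheme of $\ag_\mathbb{C}$ still containing every $X_A$, contradicting the characteristic zero case.  I expect the main obstacle to be the identification in the second paragraph: establishing precisely that each $\widetilde{X}_A$ is an $\mathrm{Sp}_{2g}(\Z[1/\ell])$-translate of $\widetilde{X}_{I_g}$ will require a careful analysis of $\psimod$ at the level of lattices and symplectic forms using Proposition~\ref{prop:psimod-over-c}.
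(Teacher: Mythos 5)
Your outline contains two genuine gaps, and both are at the points you flag as needing care.

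\textbf{The orbit identification is false as stated.}  You claim the curves $\widetilde{X}_A$ are the $\Sp_{2g}(\Z[1/\ell])$-translates of the diagonal $\widetilde{X}_{I_g}$.  The translates that one actually gets come only from the Levi subgroup $\bigl\{\bigl(\begin{smallmatrix} G & 0 \\ 0 & (G^t)^{-1}\end{smallmatrix}\bigr) : G \in \Sl_g\bigr\}$, which sends $\tau I_g$ to $\tau GG^t$; and these account for at most those $A$ of the form $\ell^m GG^t$.  A general $\gamma\in\Sp_{2g}(\Z[1/\ell])$ does \emph{not} send $\tau I_g$ into any $\widetilde{X}_A$.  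For instance with $g=2$ and $\gamma = \bigl(\begin{smallmatrix} I & B \\ 0 & I\end{smallmatrix}\bigr)$, $B = \bigl(\begin{smallmatrix} 0 & 1 \\ 1 & 0\end{smallmatrix}\bigr)\in\Sp_{4}(\Z)$, the point $\gamma(\tau_0 I_2) = \bigl(\begin{smallmatrix} \tau_0 & 1 \\ 1 & \tau_0\end{smallmatrix}\bigr)$ is not of the form $\tau A$ for any $\tau\in\hh$ and real $A$, since the off-diagonal entries are real while the diagonal entries are not.  So the $\Sp_{2g}(\Z[1/\ell])$-orbit of a point of $\widetilde{X}_{I_g}$ is \emph{not} contained in $\bigcup_A \widetilde{X}_A$, and strong approximation for $\Sp_{2g}$ cannot be applied as you propose.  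Moreover, the moduli-theoretic heuristic involving $\pi^*\lambda = \lambda_A$ points to $\GSp$, not $\Sp$, because $\lambda_A$ is not principal.  The paper's Remark at the end of \S\ref{sec:step-1} gives the correct version of the $\C$-argument: use strong approximation for $\Sl_g$ (proved directly in Lemma~\ref{lem:sl-dense}) to deduce Lemma~\ref{lem:rggt-dense}, conclude that $V = \{\tau A : \tau\in\hh, A\in\detl\}$ has topological closure containing the purely imaginary locus $\{iB : B>0\}$, and then use that a holomorphic function vanishing on that locus is zero.

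\textbf{The reduction to characteristic zero does not work.}  If $Z \subsetneq (\ag)_k$ is a proper closed subscheme over an algebraically closed field $k$ of characteristic $p$, spreading $Z$ out produces a model over a finitely generated subring of $k$ — which has characteristic $p$ — so there is no $\C$-fiber to pass to.  Even setting that aside, the containment $Z \supseteq X_A$ is required for all $A\in\detl$, i.e.\ infinitely many constructible conditions; one cannot in general find a single open subscheme of the base over which they all persist.  The paper avoids this entirely by working with $q$-expansions at the level of the integral model $\Z[1/\ell]$ (Proposition~\ref{prop:q-expansion}), combined with the unique-minimizer cone argument (Proposition~\ref{prop:unique-minimizer}), so the density is proved directly in every characteristic $\neq \ell$ rather than by transfer from $\C$.
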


Theorems \ref{thm:Sg-dense} and \ref{thm:curves-dense} are proved in \S\ref{sec:step-2} and \S\ref{sec:step-1}, respectively. Proposition~\ref{prop:lim-degree} is the only place where we require that $g > 1$.

Next we derive Theorem \ref{thm:sg-c-dense} from Theorems \ref{thm:Sg-dense} and \ref{thm:curves-dense},  we derive Theorem \ref{thm:invariant-c-constant} from Theorem \ref{thm:sg-c-dense},  we derive Corollary \ref{cor:agn} from Theorem \ref{thm:invariant-c-constant}, and we derive Corollary \ref{cor:alldcor} from Corollary~\ref{cor:agpol}.

\begin{proof}[Proof of Theorem~\ref{thm:sg-c-dense}]
%Let $k$ be an algebraically closed field containing $R$.
Fix a prime number $\ell\neq\car(k)$ that satisfies the conclusion of Theorem \ref{thm:Sg-dense}. We have
\begin{multline*}
\bigcup_{A,A' \in \detl} \left(\sg(k) \cap (X_A \times X_{A'})(k)\right) \\
 =  \sg(k) \cap \left(\bigcup_{A,A' \in \detl} (X_A \times X_{A'})(k)\right)
   \subset \sg(k).
\end{multline*}
By Theorem~\ref{thm:Sg-dense}, the set
$\bigcup_{A,A' \in \detl} \left(\sg(k) \cap (X_A \times X_{A'})(k)\right)$ is Zariski dense in
$$
\bigcup_{A,A' \in \detl} \left(X_A \times X_{A'}\right) =
    \left(\bigcup_{A \in \detl} X_A\right) \times \left(\bigcup_{A' \in \detl} X_{A'}\right),
    $$
    which by Theorem~\ref{thm:curves-dense} is Zariski dense in $\ag \times \ag$.
%  \begin{align*}
%    \sg(\C)
%    &\supseteq
%    \sg(\C) \cap \left(\bigcup_{A,A'} X_A \times X_{A'}\right)
%    \\
%    &=
%    \bigcup_{A,A'} \sg(\C) \cap (X_A \times X_{A'})
%    \\
%    &\prec
%    \bigcup_{A,A'} X_{A} \times X_{A'}
%    &&\text{By Theorem~\ref{thm:sg-c-dense}}
%    \\
%    &=
%    \left(\bigcup_{A} X_A\right) \times \left(\bigcup_{A'} X_{A'}\right)
%    \\
%    &\prec
%    \ag \times \ag
%    &&\text{By Theorem~\ref{thm:Sg-dense}}.
%  \end{align*}
\end{proof}

\begin{proof}[Proof of Theorem~\ref{thm:invariant-c-constant}]
  Let $U$ be the domain of definition of $f$, so $f: U \to X$ is a morphism. Consider the fiber product $\df = U \times_X U$.
  The universal property of $\df$ says that for any scheme $W$ and morphisms $h_1$ and $h_2$ from $W$ to $U$ such that $f \circ h_1 = f \circ h_2$, there is a unique morphism $h: W \to \df$ such that $h_1$ and $h_2$ factor through $h$.

  % Let $k$ be an algebraically closed field containing $R$.
  Applying the universal property with $W = \Spec k$ shows that $\df(k)$ consists of the pairs $(A_1,A_2) \in U \times U$ of abelian varieties over $k$ such that $f(A_1) = f(A_2)$.
  By the hypothesis on $f$, if $A_1 \approx A_2$ then $f(A_1) = f(A_2)$. Hence 
  $$(\sg \cap (U \times U))(k) \subseteq \df(k).$$
  Since $\df$ is a closed subscheme of $U \times U$, Theorem~\ref{thm:sg-c-dense} implies that $\df = U \times U$, as desired.
\end{proof}

\begin{proof}[Proof of Corollary~\ref{cor:agn}]
  Let $\pi: \agn \to \ag$ be the canonical map that ``forgets'' the level $n$ structure. The space $\agn$ is Galois over $\ag$; let $G$ denote the Galois group, so % that if $R$ is an algebraically closed field, then
  $G = \Sp_{2g}(\Z/n\Z)$. Letting $X^G$ denote the product of $\# G$ copies of $X$, indexed by $G$, then the group $G$ acts on $X^G$ by permuting the factors: for $x \in X^G$ and $h,g \in G$, if the $g$th coordinate of $x$ is $x_g$, then the $g$th coordinate of $h(x)$ is $x_{h^{-1}g}$.
  Let $X^G/G$ denote the quotient of $X^G$ by this action.

  Let $V$ denote the domain of definition of $f$, and let $U = \pi(V)$, which is an open subset of $\ag$. Define a map $f_G: U \to X^G/G$ as follows. For $u \in U$, choose  $v \in \pi^{-1}(x)$. Then $\pi^{-1}(u) = \{g(v)\}_{g \in G} \subset \agn$. Let $z = (f(g(v)))_{g \in G} \in X^G$, and let $f_G(u)$ be the image of $z$ in $X^G/G$.

  Since $\pi(A,L) = A$, we have $f_G(A_1) = f_G(A_2)$ whenever $A_1 \approx A_2$ with $A_1,A_2 \in U$. By Theorem~\ref{thm:invariant-c-constant}, $f_G$ is constant. Since $\agn$ is connected, it follows that $f$ is constant.
\end{proof}

\begin{proof}[Proof of Corollary~\ref{cor:alldcor}]
  By Corollary~\ref{cor:agpol}, $f$ is constant on each $\agd$.
 Fix two polarization types $D$ and $D'$. Our goal is to show that $f(\agd) = f(\agdp)$. Choose an elliptic curve $E$ over $k$. Then $E^g$ has polarizations $\mu$ and $\mu'$ over $k$ of types $D$ and $D'$, respectively, so $(E^g, \mu) \in \agd(k)$ and $(E^g,\mu') \in \agdp(k)$. Since $E^g \weak E^g$, by our hypothesis we have $f(E^g, \mu) = f(E^g, \mu')$. Thus $f(\agd) = f(\agdp)$, as desired.
  %\textcolor{red}{What if $E^g$ is not in domain of definition?}
\end{proof}

\begin{remark}\label{rmk:replace-with-scheme}
  All of the above results %apply
hold
if we replace the algebraically closed field $k$ with a connected, geometrically reduced scheme $S$; for example, if we replace $k$ with (Spec of) an integral domain. To see this, note that when $S$ is geometrically integral, we can use the fact that the generic point is dense in $S$. For $S$ geometrically reduced, instead consider the union of the generic points of the irreducible components. To generalize our results that assume non-divisibility by the characteristic of $k$ (for example, $\car(k) \nmid n$), we replace this hypothesis with the assumption that the relevant value %($n$ in the example) 
is relatively prime to the characteristics of all geometric generic points. %Note that where there is a divisibility hypothesis (for example, $\car(k) \nmid d_g$), we instead require being relatively prime to the characteristic of all geometric generic points.
\end{remark}

\section{Proof of Theorem~\ref{thm:Sg-dense}}
\label{sec:step-2}

%The main goal of this section is to prove Theorem~\ref{thm:Sg-dense}.

The plan is to find a suitable infinite set of pairs $(x,y) \in \sg(k) \cap (X_A \times X_{A'})(k)$. We will explicitly construct these pairs by choosing $x,y \in \ag(k)$ that are products of isogenous CM elliptic curves. To ensure both that our products are weakly isomorphic and that there are enough pairs, we first establish basic observations about primes in CM fields.

%\begin{lemma}\label{lem:silly}
%  Suppose $\ell$ is a prime number. Then there is an imaginary quadratic field $K$ such that $\sO_K^\times = \{\pm 1\}$ and $\ell$ splits into principal prime ideals in $K$.
%\end{lemma}
%\begin{proof}
%  Let $L$ denote the set of algebraic integers $\alpha \in \C$ whose minimal polynomial is of the form $x^2 - tx + \ell$ with $0 < |t| < 2\sqrt{\ell}$.
%  Then $\#L = 4\lfloor 2\sqrt{\ell} \rfloor$.
%  For any imaginary quadratic field $K$, there are at most $2\#(\sO_K^\times) \leq 12$ algebraic integers in $K$ of norm $\ell$.
%  If $\ell \geq 13$, then $4\lfloor 2\sqrt{\ell} \rfloor > 24$, so there is some $\alpha \in L$ that does not lie in $\Q(i)$ or $\Q(\sqrt{-3})$, and we can let
%   $K = \Q(\alpha)$.
%  The cases when $\ell < 13$ can be checked by hand.
%\end{proof}

\begin{lemma}\label{lem:K-exists}
  Suppose $\ell$ is a prime number, %$g$ is a positive integer,
  $g \in\Z_{\ge 1}$, and $K$ is an imaginary quadratic field over which $\ell$ splits into principal prime ideals, namely $\ell = \alpha\overline{\alpha}$ with $\alpha\in \sO_K$.
%  \begin{enumerate}
%    \item $\sO_K^\times = \{\pm 1\}$,
%    \item $\ell$ splits in $K$ into principal primes $\alpha\sO_K \cdot \overline{\alpha}\sO_K$ with $\alpha\in \sO_K$,
%    \item
Then there are infinitely many rational prime numbers $q$ such that
    \begin{enumerate}
      \item $q$ is inert in $K$,
      \item $q \equiv -1 \mod{g}$, and
      \item $\alpha \mod q\sO_K \in (\sO_K/q\sO_K)^\times$ is a $g$th power.
    \end{enumerate}
%  \end{enumerate}
\end{lemma}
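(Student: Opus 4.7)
The plan is to construct an auxiliary Galois extension $N/\mathbb{Q}$ and apply Chebotarev's density theorem to a carefully chosen element of $\mathrm{Gal}(N/\mathbb{Q})$ that enforces all three conditions simultaneously.

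First, I would set $N = K(\zeta_g, \alpha^{1/g}, \ell^{1/g})$, with compatible choices of $g$-th roots so that $\bar{\alpha}^{1/g} := \ell^{1/g}/\alpha^{1/g}$ is an actual $g$-th root of $\bar\alpha = \ell/\alpha$. The extension $N/\mathbb{Q}$ is Galois: the only Galois conjugates of $\alpha^{1/g}$ over $\mathbb{Q}$ not obviously in $N$ are the $g$-th roots of $\bar\alpha$, and these equal $\zeta_g^i \bar\alpha^{1/g} \in N$.

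Second, I would define $\sigma \in \mathrm{Gal}(N/\mathbb{Q})$ by prescribing that $\sigma|_K$ is complex conjugation, $\sigma(\zeta_g) = \zeta_g^{-1}$, $\sigma(\alpha^{1/g}) = \bar\alpha^{1/g}$, and $\sigma(\ell^{1/g}) = \ell^{1/g}$. The only consistency check is compatibility with the relation $\alpha^{1/g} \bar\alpha^{1/g} = \ell^{1/g}$: one computes $\sigma(\bar\alpha^{1/g}) = \sigma(\ell^{1/g}/\alpha^{1/g}) = \ell^{1/g}/\bar\alpha^{1/g} = \alpha^{1/g}$, so both sides transform to $\bar\alpha^{1/g}\cdot\alpha^{1/g} = \ell^{1/g}$. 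The same bookkeeping on the four generators yields $\sigma^2 = \mathrm{id}$.

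Third, I would apply Chebotarev's density theorem to $N/\mathbb{Q}$, obtaining infinitely many primes $q$ unramified in $N$ whose Frobenius conjugacy class is that of $\sigma$. Conditions (1) and (2) are then immediate: $\sigma|_K$ nontrivial forces $q$ inert in $K$, and the identity $\zeta_g^q = \zeta_g^{-1}$ forces $q \equiv -1 \pmod g$. For condition (3), choose a prime $\mathfrak{P}$ of $N$ above $q$ with $\mathrm{Frob}_\mathfrak{P} = \sigma$; its decomposition subgroup in $\mathrm{Gal}(N/K)$ is $\langle \sigma \rangle \cap \mathrm{Gal}(N/K) = \{1\}$ (since $\sigma \notin \mathrm{Gal}(N/K)$ while $\sigma^2 = 1$), so $\mathfrak{q} := q\mathcal{O}_K$ splits completely in $N/K$ and in particular in $K(\zeta_g, \alpha^{1/g})/K$. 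As the residue field at $\mathfrak{q}$ is $\mathbb{F}_{q^2} \supset \mu_g$, Kummer theory translates this complete splitting into the assertion that $\alpha \bmod \mathfrak{q}$ is a $g$-th power in $(\mathcal{O}_K/\mathfrak{q})^\times$.

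The main obstacle I expect is just the bookkeeping to verify that $\sigma$ is a well-defined involution of $N$; once that is in place, Chebotarev together with the standard dictionary between complete splitting and residue-field $g$-th powers handles all three conditions at once.
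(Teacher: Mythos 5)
Your proof is correct and takes essentially the same approach as the paper: both work over the Galois closure of $K(\zeta_g,\alpha^{1/g})/\Q$ (your $N = K(\zeta_g,\alpha^{1/g},\ell^{1/g})$ equals the paper's $L = K(\zeta_g,\alpha^{1/g},\overline{\alpha}^{1/g})$) and apply Chebotarev to the complex-conjugation class. The paper sidesteps your hand-verification that $\sigma$ is a well-defined involution by simply taking $\sigma$ to be the restriction of complex conjugation under an embedding $L\hookrightarrow\C$ (legitimate since $L/\Q$ is Galois), and it deduces (3) more directly: since $\sigma$ has order $2$, the residue field $\sO_L/\mathfrak{q}$ coincides with $\sO_K/q\sO_K\cong\F_{q^2}$, so $\alpha^{1/g}\bmod\mathfrak{q}$ is already the desired $g$-th root of $\alpha$ in $(\sO_K/q\sO_K)^\times$, with no need to invoke complete splitting and Kummer theory.
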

\begin{proof}
  %By Lemma~\ref{lem:silly}, we can find a $K$ satisfying (1) and (2).
  Let $L = K(\zeta_g,\alpha^{1/g},\overline{\alpha}^{1/g})$.
  Let $\sigma$ be any complex conjugation in $\Gal(L/\Q)$.
  Let $\mathfrak{q}$ be any prime of $L$ unramified in $L/\Q$ whose Frobenius $\mathrm{Frob}(\mathfrak{q}) \in \Gal(L/\Q)$ is $\sigma$. The Chebotarev density theorem guarantees that there are infinitely many such $\mathfrak{q}$.
  Let $q$ be the rational prime below $\mathfrak{q}$.
  Since $\sigma|_K$ is complex conjugation, $q$ is inert in $K$, giving (1).
  Since $\sigma|_{\Q(\zeta_g)}$ is complex conjugation, $q \equiv -1 \pmod{g}$, giving (2).
%To see this, note that under the usual isomorphism $\Gal(\Q(\zeta_g)/\Q) \cong (\Z/g\Z)^\times$, complex conjugation is associated to $-1$. The map Frobenius element corresponding to $\mathfrak{q} \cap \Q(\zeta_g)$ sends $\zeta_g \mapsto \zeta_g^c$.
Since $\sigma$ has order $2$, we have $\sO_L/\mathfrak{q} \cong \sO_K/q\sO_K \cong \F_{q^2}$. In particular, reducing $\alpha^{1/g}$
modulo ${\mathfrak q}$ gives a $g$th root of $\alpha$
% $\alpha^{1/g}$ is a $g$th root of $\alpha$
 in $(\sO_K/q\sO_K)^\times$, giving (3).
%Moreover, if $\sigma$ satisfies these conditions, then so does every conjugate of $\sigma$.
%By Chebotarev's density theorem, it remains to show that there exists some $\sigma \in \Gal(L/\Q)$ of order $2$ that restricts to complex conjugation in $K$ and $\Q(\zeta_g)$. Every complex conjugation in $\Gal(L/\Q)$ has these properties.
%  Choose any embedding $\iota: L \to \C$ and define $\sigma \in \Gal(L/\Q)$ by $a \mapsto \iota^{-1}(\overline{\iota(a)})$. Note that $\sigma$ has order $2$. Since $K$ and $\Q(\zeta_g)$ are CM fields, $\sigma$ restricts to the usual complex conjugation on them. Therefore $\sigma$ has the desired properties.
\end{proof}

We now establish some properties of CM elliptic curves that we will use in Proposition \ref{prop:lim-degree}.
\begin{definition}
  If $K$ is an imaginary quadratic field and $q$ is a prime number that is inert in $K$, let
  $$\Gt_q=(\sO_K/q\sO_K)^\times/(\Z/q\Z)^\times. $$
  %The field $K$ should be clear from the context.
\end{definition}

\begin{lemma}\label{lem:c-torsor}
  Suppose $E$ is an elliptic curve over an algebraically closed field $k$, and $\End(E) \cong \sO_K$ for some imaginary quadratic field $K$. Let $q$ be a prime number that is inert in $K$ and not equal to the characteristic of $k$. Then the set of subgroups of $E$ of order $q$ is a torsor over $\Gt_q$.
\end{lemma}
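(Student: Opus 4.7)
The plan is to exploit the CM structure to put a $\mathcal{O}_K/q\mathcal{O}_K \cong \mathbb{F}_{q^2}$-module structure on $E[q]$, of rank one, and then translate the statement into elementary linear algebra over $\mathbb{F}_{q^2}$.

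First I would observe that, because $q \neq \mathrm{char}(k)$, the group $E[q]$ is a $2$-dimensional $\mathbb{F}_q$-vector space, and the subgroups of $E$ of order $q$ are exactly its $1$-dimensional $\mathbb{F}_q$-subspaces, so there are $q+1$ of them. Since $q$ annihilates $E[q]$, the action of $\mathcal{O}_K = \mathrm{End}(E)$ on $E[q]$ factors through $\mathcal{O}_K/q\mathcal{O}_K$, which is a field with $q^2$ elements because $q$ is inert in $K$. So $E[q]$ carries the structure of an $\mathbb{F}_{q^2}$-vector space, and comparing $\mathbb{F}_q$-dimensions forces it to be one-dimensional. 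Note that the order of $\tilde{G}_q$ is $(q^2-1)/(q-1) = q+1$, which matches the number of order-$q$ subgroups.

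Next I would define the action and show it descends. The group $(\mathcal{O}_K/q\mathcal{O}_K)^\times = \mathbb{F}_{q^2}^\times$ acts on $E[q]$, and this action permutes the $\mathbb{F}_q$-lines. Scalars from $(\mathbb{Z}/q\mathbb{Z})^\times = \mathbb{F}_q^\times \subseteq \mathbb{F}_{q^2}^\times$ act trivially on every $\mathbb{F}_q$-line, so the action factors through $\tilde{G}_q$.

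It then remains to verify transitivity and freeness. For transitivity, fix a line $L_0$ with a nonzero generator $P$; since $E[q]$ is free of rank one over $\mathbb{F}_{q^2}$, every nonzero point is of the form $\alpha P$ for a unique $\alpha \in \mathbb{F}_{q^2}^\times$, and $\alpha \cdot L_0$ is the line through $\alpha P$, so every line is hit. For freeness, suppose $\alpha \cdot L = L$ for some line $L$; choose a nonzero $P \in L$, so $\alpha P = c P$ for some $c \in \mathbb{F}_q^\times$, hence $(\alpha - c)P = 0$; since $P$ is a free generator of $E[q]$ over $\mathbb{F}_{q^2}$, this forces $\alpha = c \in \mathbb{F}_q^\times$, so $\alpha$ is trivial in $\tilde{G}_q$.

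There is no real obstacle here; the only content is the identification of $E[q]$ as a free rank-one module over $\mathcal{O}_K/q\mathcal{O}_K$, which is standard once $q$ is inert and coprime to $\mathrm{char}(k)$. Everything else is linear algebra over a field, and the matching of cardinalities $\#\tilde{G}_q = q+1 = \#\{\text{order-}q\text{ subgroups}\}$ means that transitivity alone would already imply the torsor property.
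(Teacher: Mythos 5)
Your proof is correct and follows essentially the same route as the paper: identify $E[q]$ as a free rank-one module over $\mathcal{O}_K/q\mathcal{O}_K \cong \mathbb{F}_{q^2}$, then observe that $\mathbb{F}_{q^2}^\times/\mathbb{F}_q^\times$ acts freely and transitively on the $\mathbb{F}_q$-lines. You simply unpack the freeness and transitivity in more detail than the paper's one-line justification.
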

\begin{proof}
  Since $E[q]$ is a module over $\sO_K/q\sO_K \cong \F_{q^2}$  of size $q^2$, we have that $E[q] \cong \F_{q^2}$ as $\F_{q^2}$-vector spaces. So $\F_{q^2}^\times/\F_q^\times \cong \Gt_q$ acts freely and transitively on the one-dimensional $\F_q$-subspaces of $E[q]$.
\end{proof}

\begin{definition}[{\cite[Sec.~2]{kani2011products}}]\label{def:ker-idl}
  Suppose $A$ is an abelian variety over a field. If $I$ is a regular left ideal of $\End(A)$ (i.e., a left ideal that contains an isogeny), let $H(I) = \cap_{\phi \in I}\ker \phi$. If $H$ is a finite subgroup scheme of $A$, let $I(H) = \{\phi \in \End(A) \colon \phi(H) = 0\}$. A finite subgroup scheme $H$ of $A$ is an \emph{ideal subgroup} of $A$ if $H = H(I)$ for some ideal $I$ of $\End(A)$. A left ideal $I$ of $\End(A)$ is a \emph{kernel ideal} if $I = I(H)$ for some finite subgroup scheme $H$ of $A$.
\end{definition}

\begin{lemma}[{\cite[Prop.~23]{kani2011products}}]\label{lemma:order-ideal-subgroup}
  Suppose $E$ is a CM elliptic curve. If $H = H(I)$ is an ideal subgroup of $E$, then $\# H = [\End(E):I]$.
\end{lemma}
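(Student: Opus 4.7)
The plan is to reduce to the principal case, where the claim is immediate, and then to leverage the $\End(E)$-module structure on a torsion subgroup $E[n] \supseteq H(I)$. Write $\sO = \End(E)$, an order in the imaginary quadratic field $K$.

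\textbf{Principal case.} If $I = \sO\phi$, then every $\psi \in I$ factors through $\phi$, so $H(I) = \ker\phi$. Since the degree of an endomorphism equals its absolute norm on $K$, this gives $\#H(I) = \deg\phi = \mathrm{Nm}_{K/\mathbb{Q}}(\phi) = [\sO:\sO\phi]$.

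\textbf{General case.} The quotient $\sO/I$ is finite, so some positive integer $n$ lies in $I$; hence $H(I) \subseteq E[n]$, and for the applications in this paper one may arrange $n$ coprime to $\mathrm{char}(k)$. The main technical input is that $E[n]$ is a free module of rank one over the Artinian ring $R := \sO/n\sO$. In characteristic zero this is visible from the analytic uniformization $E = \mathbb{C}/\mathfrak{a}$ for a proper (hence $\sO$-invertible) ideal $\mathfrak{a}$, giving
\[
E[n] \;\cong\; \mathfrak{a}/n\mathfrak{a} \;\cong\; \mathfrak{a} \otimes_{\sO} R,
\]
which is $R$-free of rank one since $\mathfrak{a}$ is invertible and $R$ is semilocal. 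In positive characteristic the same computation runs prime by prime on the $\ell$-adic Tate modules $T_\ell E$, which are free of rank one over $\sO \otimes \Z_\ell$ for each $\ell \neq \mathrm{char}(k)$. Under this identification,
\[
H(I) \;\cong\; \mathrm{Ann}_R\bigl(I/n\sO\bigr) \;=\; (n\sO:I)/n\sO.
\]

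Finally, every order in a quadratic field is Gorenstein, so $R$ is an Artinian self-injective ring, and Matlis duality yields $\#\mathrm{Ann}_R(J) = \#(R/J)$ for every ideal $J \subseteq R$. Applying this with $J = I/n\sO$ gives $\#H(I) = \#(R/J) = \#(\sO/I) = [\sO:I]$. I expect the main obstacle to be the rank-one freeness of $E[n]$ over $R$ when $\sO$ is non-maximal and the conductor divides $n$: one has to argue locally at each prime of $\sO$ and exploit that the $\sO$-module class underlying $E$ is invertible at every such prime, so that its reduction mod $n$ remains a free cyclic $R$-module.
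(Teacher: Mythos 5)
This lemma is cited in the paper from \cite[Prop.~23]{kani2011products} and is not proved there, so there is no internal proof to compare against; I will evaluate your argument on its own terms.

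Your proof is essentially correct, and it follows what is in fact the standard structural route: the two key inputs are (i) that $E[n]$ is free of rank one over $R := \mathcal{O}/n\mathcal{O}$ where $\mathcal{O} = \End(E)$, and (ii) that $R$ is a finite Gorenstein ring (since every quadratic order is monogenic over $\Z$, hence a complete intersection), so that $\mathrm{Hom}_R(-,R)$ is an exact, length-preserving duality and $\#\mathrm{Ann}_R(J)=\#(R/J)$. Given (i), the identification $H(I)\cong\mathrm{Ann}_R(\bar I)=(n\mathcal{O}:I)/n\mathcal{O}$ is immediate, and (ii) then gives $\#H(I)=\#(R/\bar I)=[\mathcal{O}:I]$. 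One pleasant byproduct, worth noting, is that the same Gorenstein duality shows $I(H(I))=I$ for every regular ideal $I$, so for CM elliptic curves every such ideal is automatically a kernel ideal; this explains why the statement can be phrased for arbitrary ideals $I$ without inconsistency.

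The one step you rightly flag as the real work is (i), rank-one freeness of $E[n]$ (equivalently, of $T_\ell E$ over $\mathcal{O}\otimes\Z_\ell$ when $\ell$ divides the conductor of $\mathcal{O}$). Over $\C$ this is the invertibility of the lattice $\mathfrak{a}$ with $E=\C/\mathfrak{a}$; in positive characteristic one needs the standard but nontrivial facts that an ordinary CM curve has $T_\ell E$ a \emph{proper} $\mathcal{O}\otimes\Z_\ell$-lattice for $\ell\neq\mathrm{char}(k)$ (via Tate's theorem or a Deuring/Serre--Tate lift) and that proper fractional ideals of quadratic orders are invertible. Your hedge about taking $n$ coprime to $\mathrm{char}(k)$ is harmless for the applications in this paper (where the lemma is always applied to subgroups of order prime to the characteristic), though the lemma as cited from Kani is stated without that restriction; the $p$-part would require handling the $p$-divisible group of an ordinary curve separately. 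Also note that your ``principal case'' is subsumed by the general argument and is not used as a reduction step, so it functions only as a sanity check rather than part of the logical chain.
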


\begin{theorem}[{\cite[Thm.~20b]{kani2011products}}]\label{thm:kani-20b}
  Suppose $E$ is a CM elliptic curve over an algebraically closed field and $H$ is a finite subgroup scheme of $E$. Then $H$ is an ideal %kernel
  subgroup if and only if %there is an inclusion $\End(E) \hookrightarrow \End(E/H)$.
  $\End(E) \subseteq \End(E/H)$. Moreover, if $H_1$ and $H_2$ are ideal subgroups of $E$, then $E/H_1 \approx E/H_2$ if and only if $I(H_1)$ and $I(H_2)$ are isomorphic as $\End(E)$-modules.
  %$f_{E_H} \mid f_E$
\end{theorem}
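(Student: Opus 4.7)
The plan is to leverage commutativity of $R:=\End(E)$ (an order in an imaginary quadratic field $K$) together with the structure of $E[n]$ as an $R/nR$-module, which in the maximal-order case is free of rank one. The two directions of the first biconditional then become: ($\Rightarrow$) if $H=H(I)$ for a left ideal $I$ and $\phi\in R$, $\psi\in I$, $h\in H$, commutativity of $R$ gives $\psi(\phi(h))=(\phi\psi)(h)=0$, so $\phi(h)\in\bigcap_{\psi\in I}\ker\psi=H$ and hence $\phi$ descends to an endomorphism of $E/H$, yielding $R\subseteq\End(E/H)$. ($\Leftarrow$) If $R\subseteq\End(E/H)$, then each $\phi\in R$ stabilizes $H$, so $H$ is an $R$-submodule of $E[n]$ for $n=\#H$. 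Since $R/nR$ is a quotient of a Dedekind domain by a nonzero ideal and is therefore an Artinian Gorenstein ring, the assignments $H\mapsto I(H)$ and $I\mapsto H(I)$ are mutually inverse order-reversing bijections between $R$-submodules of $E[n]$ and ideals of $R$ containing $nR$; in particular $H=H(I(H))$, so $H$ is an ideal subgroup.

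For the ``moreover'' claim, both $E/H_1$ and $E/H_2$ are CM elliptic curves whose endomorphism rings contain $R$, so by the theory of complex multiplication their isomorphism classes are classified by elements of the Picard group $\mathrm{Pic}(R)$. Working over $\mathbb{C}$ with $E=\mathbb{C}/\Lambda$, one checks that $E/H(I)\cong\mathbb{C}/I^{-1}\Lambda$, so the class attached to $E/H_i$ is precisely $[I(H_i)^{-1}]\in\mathrm{Pic}(R)$; Deuring's lifting theorem propagates the identification to arbitrary characteristic. Two invertible $R$-ideals lie in the same Picard class iff they are isomorphic as $R$-modules, so we conclude $E/H_1\approx E/H_2$ iff $I(H_1)\cong I(H_2)$ as $R$-modules. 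The counting identity $\#H(I)=[R:I]$ from Lemma~\ref{lemma:order-ideal-subgroup} provides a consistency check on degrees throughout.

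The main obstacle I anticipate is the non-maximal-order case: when $R$ is a proper suborder of $\sO_K$, the module $E[n]$ need not be cyclic over $R/nR$ and not every ideal of $R$ is invertible, so both the Gorenstein duality step and the Picard-class step require care. I would handle this by restricting attention to the \emph{proper} ideals of $R$ (those whose multiplier ring $\{x\in K:xI\subseteq I\}$ equals $R$); these are exactly the kernel ideals arising from finite subgroups, they are all invertible, and $\mathrm{Pic}(R)$ computed via proper ideals still controls isomorphism classes of CM elliptic curves with endomorphism ring $R$. With this restriction the argument above applies verbatim.
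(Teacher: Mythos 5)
The paper does not prove this statement; it is quoted verbatim from Kani's paper and used as a black box. So there is no internal proof to compare against --- I can only evaluate your sketch on its own.

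Your argument for the implication ``ideal subgroup $\Rightarrow \End(E) \subseteq \End(E/H)$'' is correct. For the converse you need one more small step: $\End(E)\subseteq\End(E/H)$ does not literally say that each $\phi\in R$ stabilizes $H$; one must observe that $\Hom(E,E/H)\otimes\Q$ is a one-dimensional $K$-vector space, so $\pi\circ\phi = \tilde\phi\circ\pi$ for the corresponding $\tilde\phi\in\End(E/H)$, and then $\phi(H)\subseteq H$ follows. Also, $R/nR$ being Gorenstein is not because $R$ is Dedekind --- a non-maximal order is not Dedekind --- but because every order in a quadratic field is monogenic, hence Gorenstein; with that and the fact that $\Lambda$ (the lattice of $E$) is a proper, hence invertible, $R$-ideal (so $E[n]\cong\Lambda/n\Lambda\cong R/nR$), your duality argument goes through. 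These are fixable gaps.

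The genuine error is in the last paragraph, where you claim that the kernel ideals arising from finite subgroups ``are exactly the proper ideals of $R$.'' This is false, and the failure is exactly in the case you flagged as needing care. Take $R=\Z[2i]$, $E=\C/R$, and $H=\Z[i]/\Z[2i]$, a subgroup of order $2$. Then $\End(E/H)=\Z[i]\supsetneq R$, so $H$ is an ideal subgroup, and one computes $I(H)=2\Z[i]$. The multiplier ring of $2\Z[i]$ inside $K=\Q(i)$ is $\Z[i]$, not $R$, so $I(H)$ is a kernel ideal that is \emph{not} proper (equivalently, not invertible) as an $R$-ideal. Consequently the ``moreover'' direction cannot be reduced to $\mathrm{Pic}(R)$: restricting to proper ideals silently drops precisely those ideal subgroups $H$ for which $\End(E/H)\supsetneq R$, and the Picard-class computation $E/H(I)\cong\C/I^{-1}\Lambda$ does not literally apply when $I$ is not invertible. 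The correct framework is the full ideal class \emph{monoid} of $R$ (isomorphism classes of all fractional $R$-ideals), which under the Latimer--MacDuffee/Bass correspondence classifies $R$-lattices in $K$ and hence all elliptic curves whose endomorphism ring \emph{contains} $R$; for invertible ideals this recovers $\mathrm{Pic}(R)$, but the general case needs the monoid. As written, your proof of the ``moreover'' clause only covers the case where both $\End(E/H_i)$ equal $R$ exactly.
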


\begin{lemma}\label{lem:c-end}
  Suppose $E$ is an elliptic curve over an algebraically closed field $k$, and $\End(E) = \sO_K$ for some imaginary quadratic field $K$. Suppose $q$ is a product of distinct prime numbers that are inert in $K$ and not equal to $\car(k)$, and suppose $\sC$ is a subgroup of $E$ of order $q$. Then $\End(E/\sC) = \Z + q\sO_K \subset \sO_K = \End(E)$. In particular, every endomorphism of $E/\sC$ is induced by an endomorphism of $E$ that takes $\sC$ to $\sC$.
\end{lemma}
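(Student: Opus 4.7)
The plan is to compute $\End(E/\sC)$ as a subring of $K = \End^0(E) = \End^0(E/\sC)$, using the isogeny $\pi : E \to E/\sC$ to identify these quasi-endomorphism algebras; under this identification, an endomorphism $\phi \in \End(E) = \sO_K$ descends to an endomorphism of $E/\sC$ (with the same image in $K$) exactly when $\phi(\sC) \subseteq \sC$. Since $E/\sC$ is isogenous to $E$, the ring $\End(E/\sC)$ is an order in $K$, and every order in $K$ is contained in the maximal order $\sO_K$. It therefore suffices to determine which $\phi \in \sO_K$ preserve $\sC$: these are precisely the elements of $\End(E/\sC)$.

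To carry this out, factor $q = \ell_1 \cdots \ell_r$ into distinct inert primes and decompose $\sC = \bigoplus_{i=1}^r \sC_i$ with $\sC_i \subseteq E[\ell_i]$ of order $\ell_i$ (by Chinese Remainder applied to the finite group $\sC$). Since $\ell_i$ is inert in $K$, $\sO_K/\ell_i\sO_K \cong \F_{\ell_i^2}$, so $E[\ell_i]$ (which has size $\ell_i^2$) is a one-dimensional $\F_{\ell_i^2}$-vector space, and $\sC_i$ is a one-dimensional $\F_{\ell_i}$-subspace of this space. A scalar $a \in \F_{\ell_i^2}$ preserves such a subspace precisely when $a \in \F_{\ell_i}$, so $\phi \in \sO_K$ satisfies $\phi(\sC_i) \subseteq \sC_i$ if and only if $\phi \bmod \ell_i\sO_K \in \F_{\ell_i}$, i.e., if and only if $\phi \in \Z + \ell_i\sO_K$.

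Thus $\phi(\sC) \subseteq \sC$ if and only if $\phi \in \bigcap_i (\Z + \ell_i\sO_K)$, and a short calculation in a $\Z$-basis $\{1,\omega\}$ of $\sO_K$ (using that the $\ell_i$ are distinct primes with product $q$) identifies this intersection with $\Z + q\sO_K$. Combined with the reduction in the first paragraph, $\End(E/\sC) = \Z + q\sO_K$, and the ``in particular'' assertion follows: under the identification any $\psi \in \End(E/\sC)$ equals some $\phi \in \Z + q\sO_K \subseteq \sO_K = \End(E)$, which preserves $\sC$ by construction. The main conceptual content lies in the first-paragraph reduction to working inside $\sO_K$ (which uses that $\End(E/\sC)$ is an order in $K$, because the isogeny $\pi$ gives $\End^0(E/\sC) = K$, together with the general fact that every order in an imaginary quadratic field lies in its maximal order); once that reduction is in place, everything else follows from the linear-algebra observation that an $\F_{\ell^2}$-scalar preserves an $\F_\ell$-line in $\F_{\ell^2}$ if and only if it lies in $\F_\ell$.
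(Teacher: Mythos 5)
Your proof is correct and takes a genuinely different route from the paper's. The paper factors the isogeny $E \to E/\sC$ into a chain of prime-degree isogenies and tracks the endomorphism ring step by step, invoking Kohel's result that each prime-degree isogeny can change the endomorphism order only by a factor of that prime, Cox's norm-preserving bijection between ideals of suborders and of $\sO_K$ prime to the conductor, and Kani's machinery on ideal subgroups and kernel ideals (Lemma~\ref{lemma:order-ideal-subgroup}, Theorem~\ref{thm:kani-20b}). You instead identify $\End(E/\sC)$ directly as the stabilizer of $\sC$ inside $\sO_K$ and then compute that stabilizer by hand using the $\F_{\ell_i^2}$-module structure of $E[\ell_i]$. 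This is more elementary and self-contained; what it loses is the contact with Kani's ideal-subgroup framework, which the paper re-uses later (e.g., in Proposition~\ref{prop:prod-equiv-torsor}), so the paper's route is more economical in the context of the whole argument.

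One small point worth spelling out in your first paragraph: the equality $\End(E/\sC) = \{\phi \in \sO_K : \phi(\sC) \subseteq \sC\}$ needs both directions. The containment $\supseteq$ is immediate (such a $\phi$ descends). For $\subseteq$, you correctly observe that $\End(E/\sC)$ is an order, hence its image in $K$ lies in $\sO_K$; but you should then note that if $\psi \in \End(E/\sC)$ corresponds to $\phi \in \sO_K$, the identity $\pi\circ\phi = \psi\circ\pi$, which a priori holds only in $\Hom^0(E,E/\sC)$, holds as an equality of actual morphisms because both sides are genuine morphisms and $\Hom(E,E/\sC)$ is torsion-free; that equality then forces $\phi(\sC) \subseteq \ker\pi = \sC$. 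You clearly have this in mind (it's what "with the same image in $K$" is doing), but writing it out closes the only gap in an otherwise complete argument.
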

\begin{proof}
  Let $q = \prod_{i=1}^n q_i$ be the prime factorization of $q$.
  %, where each $q_i$ is a prime inert in $K$ and not equal to the characteristic of the base field. We can write
  Then $\sC = \sum_{i=1}^n \sC_i$ where each $\sC_i$ is a subgroup of $E$ of order $q_i$. Let $E_j = E/\sum_{i=1}^j\sC_i$.

  The isogeny $\pi: E \to E/\sC$ factors into a chain of isogenies
   \[
    E = E_0
    \xrightarrow{\pi_1}
    E_1
    \xrightarrow{\pi_2}
    E_2
    \xrightarrow{\pi_3}
    \cdots
    \xrightarrow{\pi_n}
    E_n = E/\sC.
  \]
% \[
%    E
%    \xrightarrow{\pi_1}
%    E/\sC_1
%    \xrightarrow{\pi_2}
%    E/(\sC_1 + \sC_2)
%    \xrightarrow{\pi_3}
%    \cdots
%    \xrightarrow{\pi_n}
%    E/\sum_{i=1}^n\sC_i.
%  \]
  Let $\sO_j$ be the order $\End(E_j)$ in $K$, and let $f_j$ be its conductor, i.e., $f_j = [\sO_{K} : \sO_j]$. Since the isogeny $\pi_j$ has degree $q_j$, either $[\sO_j : \sO_{j-1}] = q_j$, or $[\sO_{j-1} : \sO_j] = q_j$, or $\sO_{j-1} = \sO_j$ \cite[Prop.~5]{kohel1996endomorphism}. Thus $f_j$ divides $\prod_{i=1}^{j}q_i$. Since the $q_i$ are distinct, $q_{j+1} \nmid f_j$.
  % = [\sO_K : \sO_j]
  Since $q_j$ is inert in $K$, $\sO_K$ has no prime ideal of norm $q_j$. There is a norm-preserving bijection between prime ideals of $\sO_{j}$ that do not divide $f_{j}$ and prime ideals of $\sO_K$ that do not divide $f_{j}$ \cite[Prop.~7.20]{cox2011primes}. Thus, $\sO_{j-1}$ has no prime ideal of norm $q_j$.
By Lemma \ref{lemma:order-ideal-subgroup}, it follows that $\ker\pi_j$ is not an ideal subgroup of $E_{j-1}$. % \cite[Prop.~23]{kani2011products}.
Now by Theorem~\ref{thm:kani-20b}, we must have $[\sO_{j-1} : \sO_j] = q_j$. Then $[\sO_K : \sO_n] = \prod_{i=1}^n q_i = q$, as required.
\end{proof}

\begin{lemma}\label{lem:c-subgps-distinct-quotients}
  Suppose $E$ is an elliptic curve over an algebraically closed field $k$, and $\End(E) \cong \sO_K$ for some imaginary quadratic field $K$ with $\sO_K^\times = \{\pm 1\}$. Suppose $q$ is a prime not equal to $\car(k)$ and inert in $K$, and $\sC$ is a subgroup of $E$ of order $q$. If $\alpha,\beta \in \sO_K$ are prime to $q$, then $E/\alpha(\sC) \approx E/\beta(\sC)$ if and only if $\alpha(\sC) = \beta(\sC)$.
\end{lemma}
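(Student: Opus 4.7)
The ``if'' direction is immediate, since $\alpha(\sC) = \beta(\sC)$ exhibits $E/\alpha(\sC)$ and $E/\beta(\sC)$ as the same quotient. For the ``only if'' direction, given a weak isomorphism $\phi\colon E/\alpha(\sC)\xrightarrow{\sim}E/\beta(\sC)$, the plan is to assemble $\phi$ with the quotient isogenies $\pi_\alpha\colon E\to E/\alpha(\sC)$ and $\pi_\beta\colon E\to E/\beta(\sC)$ and the dual isogeny $\widehat{\pi_\beta}\colon E/\beta(\sC)\to E$ into an endomorphism of $E$, and then use the arithmetic of $\sO_K$ to rigidify the result. The hypothesis that $\alpha,\beta$ are prime to $q$ is needed only to guarantee that $\alpha(\sC)$ and $\beta(\sC)$ still have order $q$, so that $\pi_\alpha$ and $\pi_\beta$ are genuine degree-$q$ isogenies.

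\smallskip

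The key step is to set $\psi := \widehat{\pi_\beta}\circ\phi\circ\pi_\alpha \in \End(E) = \sO_K$. By multiplicativity of degree, $N(\psi) = \deg\psi = q\cdot 1\cdot q = q^2$, so $(\psi)(\overline{\psi}) = (q)^2$ as ideals of $\sO_K$. Since $q$ is inert in $K$, the ideal $(q)$ is prime and self-conjugate, and unique factorization of ideals forces $(\psi) = (q)$. The hypothesis $\sO_K^\times = \{\pm 1\}$ then pins down $\psi = \pm q$. This identification is the main content of the argument: the inertness of $q$ and the restriction on units together rule out any ``twisted'' lift.

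\smallskip

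To conclude, I would use the identity $\widehat{\pi_\beta}\circ\pi_\beta = [q]_E$ to rewrite the previous step as $\widehat{\pi_\beta}\circ(\phi\circ\pi_\alpha\mp\pi_\beta) = 0$. Thus the group-scheme homomorphism $\phi\circ\pi_\alpha\mp\pi_\beta\colon E\to E/\beta(\sC)$ has image contained in $\ker\widehat{\pi_\beta}$, which is a finite étale subgroup scheme of order $q$ (since $q\neq\car(k)$). Because $E$ is connected and the map sends $0$ to $0$, the image is $\{0\}$, so $\phi\circ\pi_\alpha = \pm\pi_\beta$. Comparing kernels yields $\alpha(\sC) = \beta(\sC)$, as desired. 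I do not anticipate any serious obstacle beyond correctly invoking the inertness of $q$ and the unit restriction to force $\psi = \pm q$; once that is done, everything else follows from standard dual-isogeny identities and the connectedness of $E$.
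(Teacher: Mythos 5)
Your proof is correct, and it takes a genuinely different route from the paper's. The paper first reduces to $\beta = 1$, lifts the isomorphism to an endomorphism $\tilde u\circ\tilde\alpha$ of $E/\sC$, and then invokes Lemma~\ref{lem:c-end} (the description $\End(E/\sC)=\Z+q\sO_K$, which ultimately rests on Kani's ideal-subgroup machinery) to descend this to an element $\alpha'\in\sO_K$ stabilizing $\sC$; it then matches kernels and uses $\sO_K^\times=\{\pm 1\}$ to conclude $\alpha'=\pm\alpha$. You instead assemble the endomorphism $\psi=\widehat{\pi_\beta}\circ\phi\circ\pi_\alpha$ of $E$ directly, pin it down as $\pm q$ via the degree-equals-norm identity together with inertness of $q$ and the unit restriction, cancel against $\widehat{\pi_\beta}\circ\pi_\beta=[q]$, and use connectedness of $E$ against the \'etale kernel of $\widehat{\pi_\beta}$ to get $\phi\circ\pi_\alpha=\pm\pi_\beta$, whence $\alpha(\sC)=\beta(\sC)$. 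Your argument is more elementary and more symmetric in $\alpha,\beta$ --- it bypasses Lemma~\ref{lem:c-end} entirely and needs only the standard CM and dual-isogeny identities --- whereas the paper's version is tied more tightly to the ideal-subgroup framework it reuses in Proposition~\ref{prop:prod-equiv-torsor}. Both proofs isolate the same two hypotheses ($q$ inert, $\sO_K^\times=\{\pm 1\}$) as the essential input; I see no gap in yours.
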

\begin{proof}
 Since $\alpha(\sC) = (\alpha\overline{\beta})(\beta(\sC))$, after replacing $\beta(\sC)$ with $\sC$ and $\alpha\overline{\beta}$ with $\alpha$ it suffices to prove the claim for $\beta = 1$.

  If $\alpha(\sC) = \sC$, then $E/\alpha(\sC) \approx E/\sC$, so it suffices to show the converse.

  Let $\tilde{u}: E/\alpha(\sC) \to E/\sC$ be an isomorphism. %We want to show that $\sC = \alpha(\sC)$.
If $D$ is a subgroup of $E$, let $\pi_{D}$ denote the quotient map $E \to E/D$. Then $\alpha$ induces an isogeny $\tilde{\alpha}$
%: E/\sC \to E/\alpha(\sC)$ such that $\tilde{\alpha}\circ\pi_{\sC} = \pi_{\alpha(\sC)} \circ \alpha$.
such that the left-hand square of the following diagram commutes.
$$
\xymatrix{
E \ar^{\alpha}[r]  \ar^{\pi_\sC}[d] \ar@/^1.6pc/^{\alpha'}[rr] & E \ar^{\exists u}[r]  \ar^{\pi_{\alpha(\sC)}}[d] & E \ar^{\pi_\sC}[d] \\
E/\sC \ar^-{\tilde{\alpha}}[r] & E/\alpha(\sC) \ar^-{\tilde{u}}[r] & E/\sC
}
$$
The map $\tilde{u}\circ\tilde{\alpha}$ is an endomorphism of $E/\sC$. By Lemma~\ref{lem:c-end}, $\tilde{u}\circ\tilde{\alpha}$ is induced by some $\alpha' \in \sO_K$ such that $\alpha'(\sC) = \sC$. That is, $\pi_{\sC}\circ\alpha' = \tilde{u}\circ\tilde{\alpha}\circ\pi_{\sC} = \tilde{u}\circ\pi_{\alpha(\sC)}\circ\alpha$.
  %This shows that $\deg(\alpha) = \deg(\alpha')$.
  Since $\alpha$ and $\alpha'$ are prime to $q = \#\sC$, we have
  \[
    \sC + \ker(\alpha')
    =
    \ker(\pi_{\sC} \circ \alpha')
    =
    \ker(\tilde{u}\circ\pi_{\alpha(\sC)}\circ\alpha)
    =
    \ker(\pi_{\alpha(\sC)}\circ\alpha)
    =
    \sC + \ker(\alpha),
  \]
  and $\ker(\alpha) = \ker(\alpha')$. Thus $\alpha' = u\alpha$ for some $u \in \sO_K^\times = \{\pm 1\}$. So $\alpha(\sC) = \pm\alpha'(\sC) = \pm\sC = \sC$, as desired.

  %Alternate proof of existence of u: $E/\alpha(\sC)$ and $E/\sC$ have a unique ascending isogeny. As $\tilde{u}$ is an isomorphism, it must send the ascending kernel of $E/\alpha(\sC)$ to that of $E/\sC$. The ascending kernel is the image of $E[c]$ under the quotient. Therefore $\pi^{\vee}\circ\tilde{u}\circ\pi_{\alpha(\sC)}$ is divisible by $c$, and so factors through an automorphism of $E$.
\end{proof}

\begin{proposition}\label{prop:prod-equiv-torsor}
  Suppose $E$ is an elliptic curve over an algebraically closed field $k$, and $\End(E) \cong \sO_K$ for some imaginary quadratic field $K$ with $\sO_K^\times = \{\pm 1\}$. Suppose $q$ is a prime not equal to $\car(k)$ and inert in $K$, and $\sC$ is a subgroup of $E$ of order $q$. Suppose $\alpha_1,\dots,\alpha_g,\beta_1,\dots,\beta_g \in \sO_K$ are prime to $q$, and let $\alpha = \prod_{i=1}^g\alpha_i$ and $\beta = \prod_{i=1}^g\beta_i$. Then
  \[
    \prod_{i=1}^g E/\alpha_i(\sC) \approx \prod_{i=1}^g E/\beta_i(\sC)
    \,\Leftrightarrow\,
    \alpha \equiv \beta \text{ in } \Gt_q.
  \]
\end{proposition}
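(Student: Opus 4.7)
The plan is to reinterpret the products $\prod_i E/\alpha_i(\sC)$ module-theoretically over the order $\sO' := \Z + q\sO_K$ (which by Lemma~\ref{lem:c-end} equals $\End(E/\sC) = \End(E/\alpha_i(\sC))$), and then apply a Steinitz-type classification theorem. Setting $E_0 := E/\sC$, I would use the classical CM-theoretic bijection between isomorphism classes of elliptic curves isogenous to $E_0$ with endomorphism ring exactly $\sO'$ and the Picard group $\mathrm{Pic}(\sO')$, sending an invertible $\sO'$-ideal class $[\mathfrak{b}]$ to $\mathfrak{b} \otimes_{\sO'} E_0$. Under this bijection, each $E/\alpha(\sC)$ corresponds to some class $[\mathfrak{b}_\alpha] \in \mathrm{Pic}(\sO')$.

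The hypothesis $\sO_K^\times = \sO'^\times = \{\pm 1\}$ collapses the standard conductor exact sequence for $\sO' \subset \sO_K$ to
\[
1 \to \Gt_q \to \mathrm{Pic}(\sO') \to \mathrm{Pic}(\sO_K) \to 1,
\]
and I would verify that $\alpha \mapsto [\mathfrak{b}_\alpha]$ realizes the natural injection $\Gt_q \hookrightarrow \mathrm{Pic}(\sO')$. This follows from Lemma~\ref{lem:c-subgps-distinct-quotients} together with the elementary observation that the two $\F_q$-lines $\alpha(\sC), \beta(\sC) \subset E[q] \cong \F_{q^2}$ coincide iff $\alpha/\beta$ lies in the $\F_q$-stabilizer $\F_q^\times$, equivalently iff $\alpha \equiv \beta$ in $\Gt_q$.

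Since $\prod_i E/\alpha_i(\sC) \cong (\bigoplus_i \mathfrak{b}_{\alpha_i}) \otimes_{\sO'} E_0$, the two products are weakly isomorphic iff $\bigoplus_i \mathfrak{b}_{\alpha_i} \cong \bigoplus_i \mathfrak{b}_{\beta_i}$ as $\sO'$-modules. For $g = 1$ this is immediate from Lemma~\ref{lem:c-subgps-distinct-quotients}. For $g \geq 2$ I would invoke the Steinitz--Jacobinski theorem, which classifies locally free rank-$g$ projective modules over orders in number fields by their determinant in $\mathrm{Pic}(\sO')$; the isomorphism of direct sums then reduces to $\prod_i [\mathfrak{b}_{\alpha_i}] = \prod_i [\mathfrak{b}_{\beta_i}]$ in $\mathrm{Pic}(\sO')$, which by the injection $\Gt_q \hookrightarrow \mathrm{Pic}(\sO')$ just says $\alpha \equiv \beta$ in $\Gt_q$. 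The main obstacle is justifying the Steinitz--Jacobinski classification for the non-maximal order $\sO'$: this is standard but goes beyond the classical Steinitz theorem (which handles only the Dedekind case), relying on Bass's cancellation theorem for projective modules over one-dimensional Noetherian rings.
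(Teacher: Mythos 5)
Your approach is a genuine alternative to the paper's. The paper first uses \cite[Cor.~5.2]{ktheory1964bass} to lift a diagonal matrix to $\Sl_g(\sO_K)$, producing an automorphism of $E^g$ that puts both products into the normal form $(E/\sC)^{g-1}\times E/\gamma(\sC)$, and then chains together Kani's Theorems~46, 48, and~20b with Lemmas~\ref{lem:c-subgps-distinct-quotients} and~\ref{lem:c-torsor}. You avoid the $\Sl_g$ lifting entirely by working with arbitrary direct sums of invertible $\sO'$-ideals and classifying them by their Steinitz (determinant) class. That part of your argument is sound, and in fact does not need the heavy machinery you name: for invertible fractional ideals $I_1,\dots,I_g$ of an order, choosing pairwise coprime representatives gives the split exact sequence $0\to IJ\to I\oplus J\to \sO'\to 0$ whenever $I+J=\sO'$, whence $\bigoplus_i I_i\cong\sO'^{\,g-1}\oplus\prod_i I_i$; the converse (that the determinant class is a complete invariant of such a sum) is immediate from taking $\bigwedge^g$. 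Invoking Jacobinski or Bass cancellation is overkill and introduces hypotheses you do not actually use.

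The genuine gap is the sentence ``Since $\prod_i E/\alpha_i(\sC)\cong(\bigoplus_i\mathfrak{b}_{\alpha_i})\otimes_{\sO'}E_0$, the two products are weakly isomorphic iff $\bigoplus_i\mathfrak{b}_{\alpha_i}\cong\bigoplus_i\mathfrak{b}_{\beta_i}$.'' The ``if'' direction is trivial, but the ``only if'' direction is the heart of the matter: it is precisely the assertion that the Serre tensor functor $M\mapsto M\otimes_{\sO'}E_0$ is fully faithful on finitely generated locally free $\sO'$-modules, so that an isomorphism of abelian varieties forces an isomorphism of modules. In the paper this role is played by Kani's Theorem~46, which is stated and proved exactly for this purpose and in all characteristics. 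You would need either to cite that result or to establish the adjunction $\Hom(M\otimes_{\sO'}E_0,\,N\otimes_{\sO'}E_0)\cong\Hom_{\sO'}(M,N)$ for finitely generated projective $M,N$ yourself, and in characteristic $p>0$ that requires some care (the curves are ordinary since $\End(E)\cong\sO_K$, but the bijection of $\mathrm{Pic}(\sO')$ with isomorphism classes of curves having $\End$ exactly $\sO'$, which you also use, rests on Deuring's theory rather than being purely formal). Without that input the argument does not close, even though the module-theoretic half is correct.
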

\begin{proof}
   Let $M$ be the diagonal matrix with diagonal entries $\alpha_1^{-1},\dots,\alpha_{g-1}^{-1}, \alpha_g^{-1}\alpha$. Note that $M\mod{q\sO_K} \in \Sl_g(\sO_K/q\sO_K)$. By \cite[Cor.~5.2]{ktheory1964bass}, there exists $M' \in \Sl_g(\sO_K)$ such that $M \equiv M' \mod{q\sO_K}$. The matrix $M'$ corresponds to an automorphism of $E^g$ that sends $\prod_{i=1}^g \alpha_i(\sC)$ to $\sC^{g-1} \times \alpha(\sC)$. A similar construction with $\beta$ shows that
  \[
    \prod_{i=1}^g E/\alpha_i(\sC) \approx \prod_{i=1}^g E/\beta_i(\sC)
    \,\Leftrightarrow\,
    \left(E/\sC\right)^{g-1} \times E/\alpha(\sC) \approx \left(E/\sC\right)^{g-1} \times E/\beta(\sC).
  \]

  Note that $\alpha$ induces an isogeny $\tilde{\alpha}: E/\sC \to E/\alpha(\sC)$. Let $\sO = \Z + q\sO_K$. By Lemma~\ref{lem:c-end} we have $\sO \cong \End(E/\sC)\cong \End(E/\alpha(\sC))\cong \End(E/\beta(\sC))$. Thus $\ker\tilde{\alpha}$ is an ideal subgroup by Theorem~\ref{thm:kani-20b}. The same holds for $\beta$ in place of $\alpha$.

  %I(\phi) = \{f \in \End(E/\sC) \colon \ker f \supseteq \ker \phi \}
  For an isogeny $\phi$ with domain $E/\sC$, let $I_\phi$ denote the kernel ideal of $\ker(\phi)$, i.e., $I_\phi = \{f \in \End(E/\sC) \colon f(\ker(\phi)) = 0\}$.
  By \cite[Thm.~46]{kani2011products},
  \begin{multline*}
    \left(E/\sC\right)^{g-1} \times E/\alpha(\sC) \approx \left(E/\sC\right)^{g-1} \times E/\beta(\sC)
    \\
    \Leftrightarrow
    \left(\bigoplus_{i=1}^{g-1} \sO\right) \oplus I_{\tilde{\alpha}} \cong \left(\bigoplus_{i=1}^{g-1} \sO\right) \oplus I_{\tilde{\beta}} \text{  as $\sO$-modules}.
  \end{multline*}
%  where the second isomorphism is as $\sO$-modules.
By \cite[Thm.~48]{kani2011products} (see also \cite[Rem.~49b]{kani2011products}), the latter isomorphism is equivalent to $I_{\tilde{\alpha}} \cong I_{\tilde{\beta}}$.
  By Theorem~\ref{thm:kani-20b},
  \[
    I_{\tilde{\alpha}} \cong I_{\tilde{\beta}}
    \,\Leftrightarrow\,
    E/\alpha(\sC) \approx E/\beta(\sC).
  \]
  By Lemma~\ref{lem:c-subgps-distinct-quotients},
  \[
    E/\alpha(\sC) \approx E/\beta(\sC)
    \,\Leftrightarrow\,
    \alpha(\sC) = \beta(\sC).
  \]
  By Lemma~\ref{lem:c-torsor}, $\alpha(\sC) = \beta(\sC)$ if and only if $\alpha \equiv \beta$ in $\Gt_q$.
\end{proof}

Recall the map $\psimod$ defined in \eqref{def:psimoddef}.

\begin{lemma}\label{lem:psimod-weakly-isomorphic-to-product}
Suppose $g\in\Z_{\ge 1}$, and $A$ is a $g \times g$ symmetric, positive definite integer matrix whose determinant $N$ is not divisible by the characteristic of our base. %(Later we will take $N$ to be a power of a prime $\ell$.)
Suppose $(E,C) \in Y_0(N)$, and let $\psimod(E,C) = (B,\lambda)$. Let $d_1, \cdots, d_g$ be the elementary divisors of $A$, and for $1 \le i \le g$ let $E_i = E/(N/d_i)C$. Then $\lambda$ is a principal polarization and 
$
    B \approx E_1 \times \cdots \times E_g.
$
\end{lemma}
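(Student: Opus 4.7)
The plan is to use Smith normal form to reduce to the diagonal case, at which point both claims fall out by direct computation. I would first write $A = PDQ$ with $P, Q \in \mathrm{GL}_g(\Z)$ and $D = \mathrm{diag}(d_1, \ldots, d_g)$. The integer matrices $P$ and $Q$ act as automorphisms of $E^g$ and preserve the subgroup $C^g$, because their entries are integers and $C$ is a subgroup of $E$.

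Next I would compute the kernel subgroup $\ker \lambda_A \cap C^g$. Since $\lambda_A$ factors as $P \circ \lambda_D \circ Q$ and both $P$ and $Q$ are automorphisms, $\ker \lambda_A = Q^{-1}(\ker \lambda_D) = Q^{-1}(E[d_1] \times \cdots \times E[d_g])$. Intersecting with $Q^{-1}(C^g) = C^g$ and using that the unique order-$d_i$ subgroup of the cyclic group $C$ is $(N/d_i)C$, I get
\[
  \ker \lambda_A \cap C^g = Q^{-1}\bigl((N/d_1)C \times \cdots \times (N/d_g)C\bigr).
\]
Hence $Q$ descends to an isomorphism
\[
  B \;\cong\; E^g \big/ \bigl((N/d_1)C \times \cdots \times (N/d_g)C\bigr) \;=\; E_1 \times \cdots \times E_g,
\]
which gives $B \approx E_1 \times \cdots \times E_g$.

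For principality, I would use a degree count. From $\pi^*\lambda = \lambda_A$ and the identity $\deg(\pi^*\lambda) = (\deg \pi)^2 \deg \lambda$, it suffices to compute $\deg \lambda_A$ and $\deg \pi$. The former equals $\det(A)^2 = N^2$ since $A$ is positive definite, and the latter equals $|\ker \lambda_A \cap C^g| = d_1 \cdots d_g = N$, so $\deg \lambda = 1$. The main thing to be careful about is verifying that $Q$, viewed as an automorphism of $E^g$ via $M_g(\Z) \subset \mathrm{End}(E^g)$, actually preserves $C^g$; this is immediate because $Q$ has integer entries and $C$ is a subgroup, but it is the step that makes the Smith normal form reduction legitimate.
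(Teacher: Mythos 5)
Your proof is correct and takes essentially the same approach as the paper: both reduce to the diagonal case via Smith normal form $A=PDQ$ (the paper writes $A=UDV$), use that the $\mathrm{GL}_g(\Z)$ factors preserve $C^g$ to identify $\ker\lambda_A\cap C^g$ with $\prod_i(N/d_i)C$ up to an automorphism of $E^g$, and use the degree identity $\deg(\pi^*\lambda)=(\deg\pi)^2\deg\lambda$ for principality. Your write-up makes the degree count ($\deg\lambda_A=N^2$, $\deg\pi=N$) explicit, which the paper leaves to the reader; otherwise the arguments are the same.
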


\begin{proof}
Since $\lambda_A = \pi^*(\lambda) = \pi^\vee \circ \lambda \circ \pi$, where $\pi^\vee$ is the dual isogeny, we have $\deg(\lambda_A) = \deg(\pi)^2 \cdot \deg(\lambda)$. Therefore $\lambda$ is a principal polarization.
  
Let $D$ be the diagonal matrix with $d_1, \cdots, d_g$ on the diagonal.
%   \[
%     D =
%    \begin{bmatrix}
%      {d_1} & & & \\
%      & {d_2} & & \\
%      & & \ddots & \\
%      & & & {d_{g}}
%    \end{bmatrix}
%  \]
Then $D$ is the Smith normal form of $A$, and there exist $U,V \in \Gl_g(\Z)$ such that $A = UDV$.
If $M \in \Gl_g(\Z)$, let $\lambda_M$ denote the natural endomorphism of $E^g$ induced by $M$.
Then %$\lambda_U$ and $\lambda_V$ are automorphisms of $E^g$ induced by $U$ and $V$, respectively, and
$\lambda_A = \lambda_U \lambda_D \lambda_V$. Since
  \begin{align*}
    \ker \lambda_D &= E[d_1] \times E[d_2] \times \cdots \times E[d_g]
  \end{align*}
  we have $(\ker \lambda_D) \cap C^g = \prod_{i=1}^g (\frac{N}{d_i}C)$.
 Since $\lambda_V(C^g) = C^g$, we now have
  \[
    \lambda_V((\ker \lambda_A) \cap C^g) = (\ker \lambda_D) \cap C^g.
  \]
  Thus,
  $$
  B = E^g/((\ker \lambda_A) \cap C^g) \approx E^g/((\ker \lambda_D) \cap C^g) = E_1 \times \cdots \times E_g.
  $$
  %the quotients of $E^g$ by $(\ker \lambda_A) \cap C^g$ and by $(\ker \lambda_D) \cap C^g$ are weakly isomorphic. The lemma follows.
\end{proof}

\begin{proposition}\label{prop:lim-degree}
Suppose $g\in\Z_{\ge 2}$. % and $k$ is an algebraically closed field containing the base ring.
  Then there are infinitely many rational primes $\ell\neq\car(k)$ such that if
   $A,A' \in \detl$, $\det(A) = \ell^n$, and $\det(A') = \ell^m$,
  %M_{g \times g}(\Z)$ be positive definite symmetric matrices such that $\det(A) = \ell^n$ and $\det(A') = \ell^m$ for a prime $\ell$. Let $X_A$ and $X_{A'}$ denote the images of the maps $\psi_A: Y_0(\ell^n) \to \ag$ and $\psi_{A'}: Y_0(\ell^m) \to \ag$.
  then there exists an infinite sequence $x_i \in Y_0(\ell^n)(k)$ such that
  \[
    \lim_{i \to \infty}\#\left\{ y \in Y_0(\ell^m)(k) \colon \psi_A(x_i) \approx \psi_{A'}(y) \right\} = \infty.
  \]
\end{proposition}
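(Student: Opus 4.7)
The plan is to construct an infinite sequence $x_r \in Y_0(\ell^n)(k)$ by quotienting a fixed CM elliptic curve $E$ by cyclic subgroups whose orders are squarefree products of $r$ primes drawn from Lemma~\ref{lem:K-exists}; Proposition~\ref{prop:prod-equiv-torsor}, suitably extended to such squarefree moduli, will then produce at least $g^r$ matching $y$'s, forcing the count to tend to infinity.

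Fix an imaginary quadratic field $K$ with $\sO_K^\times = \{\pm 1\}$ (chosen so that $\car(k)$ splits in $K$ when $\car(k) > 0$) and an elliptic curve $E/k$ with $\End(E) \cong \sO_K$. The primes $\ell \neq \car(k)$ in the conclusion will be those that split in $K$, say $\ell = \alpha\overline\alpha$; by Lemma~\ref{lem:K-exists} I extract an infinite sequence of distinct primes $q_1,q_2,\ldots$, each inert in $K$, with $q_j \equiv -1 \pmod g$, and with $\alpha$ a $g$th power modulo $q_j\sO_K$. For each $r$ set $Q_r = q_1\cdots q_r$, pick a cyclic subgroup $\sC_r \subset E$ of order $Q_r$, and define $x_r = (E/\sC_r,\ \pi_{\sC_r}(E[\alpha^n]))$, where $\pi_{\sC_r}$ is the quotient map (an isomorphism on $\ell$-torsion since $\gcd(\ell,Q_r) = 1$). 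Writing $d_i = \ell^{e_i}$ for the elementary divisors of $A$, Lemma~\ref{lem:psimod-weakly-isomorphic-to-product} combined with the isomorphism $E/E[\alpha^{e_i}]\cong E$ induced by $\alpha^{e_i}$ yields
\[
  \psi_A(x_r) \approx \prod_{i=1}^g E/\alpha^{e_i}(\sC_r).
\]
For each $\beta \in \Gt_{Q_r} := (\sO_K/Q_r\sO_K)^\times / (\Z/Q_r\Z)^\times \cong \prod_j \Gt_{q_j}$, set $y_\beta = (E/\beta(\sC_r),\ \pi_{\beta(\sC_r)}(E[\alpha^m])) \in Y_0(\ell^m)(k)$; an analogous computation gives $\psi_{A'}(y_\beta) \approx \prod_j E/(\alpha^{e'_j}\beta)(\sC_r)$.

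The main technical step, and where I expect the main obstacle to lie, is to extend Proposition~\ref{prop:prod-equiv-torsor} (and its supporting Lemmas~\ref{lem:c-torsor}, \ref{lem:c-end}, \ref{lem:c-subgps-distinct-quotients}, together with the cited Kani Theorems~46 and~48) from prime $q$ to the squarefree modulus $Q_r$, using the CRT decomposition $\sO_K/Q_r\sO_K \cong \prod_j \F_{q_j^2}$ and the fact that $\Z + Q_r\sO_K$ remains a Bass order. Granting this extension, $\psi_A(x_r) \approx \psi_{A'}(y_\beta)$ if and only if the products $\alpha^n$ and $\alpha^m\beta^g$ agree in $\Gt_{Q_r}$, i.e., $\beta^g \equiv \alpha^{n-m}$. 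By Lemma~\ref{lem:K-exists}(3), $\alpha^{n-m}$ is a $g$th power in each cyclic factor $\Gt_{q_j}$, whose order $q_j+1$ is divisible by $g$, so $\alpha^{n-m}$ has exactly $g$ $g$th roots in each $\Gt_{q_j}$; by CRT there are $g^r$ valid $\beta \in \Gt_{Q_r}$. Distinct $\beta$'s yield distinct subgroups $\beta(\sC_r)$, and hence distinct elliptic curves $E/\beta(\sC_r)$ (since $\Aut(E) = \{\pm 1\}$ fixes every subgroup), so the $g^r$ points $y_\beta$ are distinct in $Y_0(\ell^m)(k)$. Since $g \ge 2$, the count $g^r \to \infty$ as $r \to \infty$, completing the proof.
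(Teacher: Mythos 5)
Your proof is essentially the paper's: same fixed CM curve $E$ with $\End(E)\cong\sO_K$ and $\sO_K^\times=\{\pm1\}$, same $\ell=\alpha\bar\alpha$ splitting into principal primes, same auxiliary inert primes $q_j$ from Lemma~\ref{lem:K-exists}, same $x_r$ built from $E/\sC_r$ with $\#\sC_r=q_1\cdots q_r$, and the same count of $g^r$ matching $y_\beta$ indexed by the $g$th roots of $\alpha^{n-m}$ in $\prod_j\Gt_{q_j}$; moreover the paper, like you, invokes the extension of Proposition~\ref{prop:prod-equiv-torsor} (and of Lemmas~\ref{lem:c-torsor} and~\ref{lem:c-subgps-distinct-quotients}) from a prime to a squarefree modulus implicitly via the Chinese remainder theorem rather than spelling it out. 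One small slip: the parenthetical ``since $\operatorname{Aut}(E)=\{\pm1\}$ fixes every subgroup'' is not by itself a valid reason for the $y_\beta$ to be distinct --- distinct cyclic subgroups of a CM curve can have isomorphic quotients (e.g.\ the kernels of the two prime ideals above a split prime when $h(K)=1$) --- and the correct justification is the squarefree extension of Lemma~\ref{lem:c-subgps-distinct-quotients}, which you do already list among the lemmas needing extension.
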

\begin{proof}
  Let $K$ be an imaginary quadratic field %as in Lemma~\ref{lem:silly}, there exist a principal prime $\alpha\sO_K$ lying over $\ell$
with $\sO_K^\times = \{\pm 1\}$ and such that if $\car(k) > 0$ then $\car(k)$ splits in $K$.
Let $\ell$ be any prime number not equal to $\car(k)$ that splits into principal primes, $\ell = \alpha{\bar \alpha}$ with $ \alpha\in \sO_K$.
There are infinitely many such primes $\ell$ by the Chebotarev density theorem.
Choose an infinite sequence $\{q_i\}$ of rational primes as in Lemma~\ref{lem:K-exists}. Since $\ell$ splits and the $q_i$ are
inert in $K$, we have $q_i\ne \ell$ for all $i$. % We may assume $q_i \neq \ell$ for all $i$.

  The classical theory of complex multiplication shows that there is an elliptic curve $E_0$ defined over a number field $L$ with good reduction everywhere and with $\End(E_0) \cong \sO_K$.
  If $\car(k) = 0$, then the base change $E$ of $E_0$ to $k$ via any embedding of $L$ into $k$ is an elliptic curve over $k$ with $\End(E) \cong \sO_K$.
  If $p:= \car(k) > 0$, then since $p$ splits in $K$, the curve $E_0$ has ordinary reduction at all primes above $p$;
   if $E$ is the base change of $E_0$ to $k$ via any reduction map of $\sO_L$ to $k$, then $E$ is an ordinary elliptic curve and $\End(E)$ contains a copy of $\sO_K$, so $\End(E) \cong \sO_K$. Thus, there is an elliptic curve $E$ over $k$ such that $\End(E) \cong \sO_K$.
  Fix such a curve $E$.

  If $\sC$ is a cyclic subgroup of $E$ of order prime to $\ell$, then $\alpha$ induces a chain of isogenies
  \[
    E/\sC \to E/\alpha(\sC) \to \cdots \to E/\alpha^n(\sC).
  \]
  Let $\alpha_{\sC,n}: E/\sC \to E/\alpha^n(\sC)$ be the composition of this chain. Then $\alpha_{\sC,n}$ is a cyclic $\ell^n$-isogeny, so the pair $(E/\sC,\alpha_{\sC,n})$ defines a point in $Y_0(\ell^n)(k)$.

    For each $i$, fix a cyclic subgroup $\sC_i$ of $E$ of order $q_i$. Define $x_1$ to be $(E/\sC_1,\alpha_{\sC_1,n})$, $x_2$ to be $(E/(\sC_1 + \sC_2), \alpha_{\sC_1 + \sC_2,n})$, and so on. The $x_i$ are distinct points, since the curves all have different endomorphism rings by Lemma~\ref{lem:c-end}.

%    Next we will construct the values of $y \in Y_0(\ell^m)$ such that $\psi_{A'}(y) \approx \psi_{A}(x_i)$. We first focus on the case $i = 1$.
Let $\ell^{n_1},\dots,\ell^{n_g}$ be the elementary divisors of $A$  and let $\ell^{n_1'},\dots,\ell^{n_g'}$ be the elementary divisors of $A'$.
By Lemma~\ref{lem:psimod-weakly-isomorphic-to-product} we have $$\psi_{A}(x_1) \approx E/\alpha^{n_1}(\sC_1) \times \cdots \times E/\alpha^{n_g}(\sC_1).$$
    %By our choice of $K$, $\alpha$ is a $g$th power in $\sO_K/q_1\sO_K$, so we can find $\gamma \in \sO_K$ such that $\alpha \equiv \gamma^g \mod{q_1\sO_K}$. By Lemma~\ref{lem:c-torsor}, $\alpha(\sC_1) = \gamma^g(\sC_1)$ and
%    \[
%      \psi_{A}(x_1) \approx E/\gamma^{gn_1}(\sC_1) \times \cdots \times E/\gamma^{gn_g}(\sC_1).
%    \]
    Let $S$ denote a set of representatives $\beta \in \sO_K$ of the solutions to %the equation
    \[
      \alpha^{n_1 + \cdots + n_g} \equiv \beta^g\alpha^{n_1' + \cdots + n_g'}
      \text{ in } \Gt_{q_1}.
    \]
Since $\alpha$ is a $g$-th power in  $(\sO_K/q_1\sO_K)^\times$, and  $g$ divides the order $q_1 + 1$ of the cyclic group $\Gt_{q_1}$,
   we have $\#S = g$. For all $\beta \in S$, let $y_\beta = (E/\beta(\sC_1),\alpha_{\beta(\sC_1),m})$. By Lemma~\ref{lem:psimod-weakly-isomorphic-to-product} we have $$\psi_{A'}(y_\beta) = E/\alpha^{n_1'}(\beta(\sC_1)) \times \cdots \times E/\alpha^{n_g'}(\beta(\sC_1)).$$ Thus $\psi_A(x_1) \approx \psi_{A'}(y_\beta)$ by Proposition~\ref{prop:prod-equiv-torsor}. By Lemma~\ref{lem:c-subgps-distinct-quotients} the $y_\beta$ are distinct. This gives $g$ points $y \in Y_0(\ell^m)(k)$ with $\psi_{A}(x_1) \approx \psi_{A'}(y)$.

  For $x_2$, we use a similar construction. By the Chinese remainder theorem, the set of subgroups of $E$ of order $q_1q_2$ is a torsor over $\Gt_{q_1} \times \Gt_{q_2}$. Finding $\beta \in \sO_K$ such that
  \[
    \psi_{A}(x_2) \approx \psi_{A'}\left(E/\beta(\sC_1+\sC_2),\alpha_{\beta(\sC_1+\sC_2),m}\right)
  \]
  reduces to finding solutions $\beta$ to %the equation
  \[
    \alpha^{n_1 + \cdots + n_g} \equiv \beta^g\alpha^{n_1' + \cdots + n_g'}
    \text{ in } \Gt_{q_1} \times \Gt_{q_2}.
  \]
%  Here $\gamma$ is chosen such that $\alpha \equiv \gamma^g \mod{q_1q_2\sO_K}$.
There are precisely $g^2$ solutions $\beta$. Continuing this construction for $i=3,4,\dots$, we find that for each $i$ there are $g^i$ points $y \in Y_0(\ell^m)(k)$ such that $\psi_{A}(x_i) \approx \psi_{A'}(y)$. Since $g > 1$, the result follows.
\end{proof}

\begin{proof}[Proof of Theorem~\ref{thm:Sg-dense}]
Suppose $\ell$ is a prime as in Proposition \ref{prop:lim-degree}, and $A,A'\in\detl$.
  Let $S$ denote the Zariski closure of $\sg(k) \cap (X_A \times X_{A'})(k)$ in $X_A \times X_{A'}$. By Proposition~\ref{prop:lim-degree}, the set $\sg(k) \cap (X_A \times X_{A'})(k)$ has an infinite number of geometric points. Thus $\dim S \geq 1$. Suppose $\dim S = 1$, so that $S$ is a finite union of curves $\cup S_i$ and isolated points. The $S_i$ cannot all be horizontal components---that is, of the form $X \times \{z\}$---since this would contradict Proposition~\ref{prop:lim-degree}. Let $S'$ be $S$ with the horizontal components and isolated points removed. By Proposition~\ref{prop:lim-degree}, the projection map $\pi_X: S' \to X$ has unbounded degree. But $\pi_X$ on each irreducible component of $S'$ is nonconstant, so $\pi_X|_{S'}$ has finite degree. This contradiction shows that $\dim S \geq 2$, and the desired result follows.
\end{proof}

\section{Proof of Theorem \ref{thm:arbit-char}}
\label{sec:characteristic-p}

\begin{lemma}\label{lemma:travis-lemma}
Suppose $p$ and $\ell$ are distinct prime numbers.  There are infinitely many
imaginary quadratic fields $K$ such that $p$ splits in $K$ and $\ell$ is inert in $K$.
\end{lemma}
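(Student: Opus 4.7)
The plan is to build $K$ as $\mathbb{Q}(\sqrt{-q})$ for a prime $q$ chosen to satisfy explicit congruence conditions modulo $p$, modulo $\ell$, and modulo a small power of $2$, and then invoke Dirichlet's theorem on primes in arithmetic progressions to produce infinitely many such $q$. Since distinct primes $q$ yield distinct fields $\mathbb{Q}(\sqrt{-q})$, this will immediately give infinitely many $K$ with the required splitting behavior.

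First I would handle the generic case in which both $p$ and $\ell$ are odd. Requiring $q \equiv 1 \pmod 4$ forces the discriminant of $\mathbb{Q}(\sqrt{-q})$ to equal $-q$, so for an odd prime $r \neq q$ the splitting of $r$ in $K$ is controlled by the Legendre symbol $\left(\tfrac{-q}{r}\right) = \left(\tfrac{-1}{r}\right)\left(\tfrac{q}{r}\right)$. Thus ``$p$ splits in $K$'' and ``$\ell$ is inert in $K$'' each translate into the condition that the class of $q$ modulo $p$ (respectively modulo $\ell$) lies in a specific coset of the subgroup of nonzero squares in $(\mathbb{Z}/p\mathbb{Z})^\times$ (respectively $(\mathbb{Z}/\ell\mathbb{Z})^\times$). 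Each such coset is nonempty of size $(p-1)/2$ (resp.\ $(\ell-1)/2$), so by the Chinese Remainder Theorem the combined conditions $q \equiv 1 \pmod 4$, $q \equiv r_p \pmod p$, $q \equiv r_\ell \pmod \ell$ cut out a nonempty residue class in $(\mathbb{Z}/4p\ell\mathbb{Z})^\times$. Dirichlet's theorem then produces infinitely many primes $q$ in this class, each giving a distinct imaginary quadratic field in which $p$ splits and $\ell$ is inert.

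If $p = 2$ (or symmetrically $\ell = 2$), the splitting of $2$ in $\mathbb{Q}(\sqrt{-q})$ is instead determined by $q$ modulo $8$: the prime $2$ splits precisely when $-q \equiv 1 \pmod 8$ and is inert precisely when $-q \equiv 5 \pmod 8$. I would replace the modulus $4$ by $8$ in the argument above, pick the compatible residue mod $8$, and apply Dirichlet's theorem with modulus $8\ell$ (resp.\ $8p$). The main obstacle is purely bookkeeping with Legendre symbols to check that the prescribed residue classes are nonempty and coprime to the modulus; no conceptual input beyond Dirichlet's theorem is required.
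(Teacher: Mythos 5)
Your proof is correct and follows essentially the same approach as the paper: both reduce the problem to choosing a negative integer $d$ whose residue is a square modulo $p$ and a nonsquare modulo $\ell$ (with conditions modulo $8$ when one of $p,\ell$ equals $2$), and then showing that infinitely many admissible $d$ exist. The paper gets by with just the Chinese Remainder Theorem; you sharpen this by restricting to $d=-q$ with $q$ prime and invoking Dirichlet's theorem, which makes the distinctness of the resulting fields automatic. One minor slip worth flagging: if $q\equiv 1\pmod 4$ then $-q\equiv 3\pmod 4$, so the discriminant of $\mathbb{Q}(\sqrt{-q})$ is $-4q$, not $-q$ (to get fundamental discriminant $-q$ you would need $q\equiv 3\pmod 4$). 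This is harmless for your argument, since for an odd prime $r\nmid q$ the splitting of $r$ in $\mathbb{Q}(\sqrt{-q})$ is governed by $\bigl(\tfrac{-q}{r}\bigr)$ regardless of whether $2$ ramifies, but the stated justification for imposing $q\equiv 1\pmod 4$ is inaccurate.
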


\begin{proof}
If $q$ is an odd prime not dividing an integer $d$, then $q$ splits (respectively, is inert)
in $\Q(\sqrt{d})$ if $d$ is a square (respectively, is a nonsquare) modulo $q$.
Thus if $p$ and $\ell$ are both odd, the lemma is satisfied with $K = \Q(\sqrt{d})$
for any negative integer $d$ such that both:
\begin{itemize}
\item
$d \pmod{p}$ is a square in $(\Z/p\Z)^\times$, and
\item
$d \pmod{\ell}$ is a nonsquare in $(\Z/\ell\Z)^\times$.
\end{itemize}
There are infinitely many such $K$, by the Chinese remainder theorem.
If $p=2$ the argument is similar, but replacing $(\Z/p\Z)^\times$ above with
$(\Z/8\Z)^\times$.  If $\ell = 2$, replace
$(\Z/\ell\Z)^\times$ with $(\Z/8\Z)^\times$.
\end{proof}

Suppose $A$ is a $g \times g$ diagonal matrix whose diagonal entries are positive integers, and let $N = \det A$. Observe that the map $\psimod: Y_0(N) \to \ag$ factors through the map $\ao^g \to \ag$ that sends a tuple of polarized elliptic curves $(E_1,\dots,E_g)$ to the product $E_1 \times \cdots \times E_g$ with the product polarization. Write
\[
\psidiag: Y_0(N) \to \ao^g
\]
for the associated morphism. Let $d_1, \ldots, d_g$ be the diagonal entries of $A$. If $(E, C) \in Y_0(N)$, then
\[
  \psidiag(E,C) = E_1 \times \cdots \times E_g
\]
where  $E_i := E/\frac{N}{d_i}C$.
Let $$X^{(1)}_A = \psidiag(Y_0(N)).$$

%\begin{definition}
  If $g$ is a positive integer and $\ell$ is a prime number, let $W_{\ell,g}$ denote the set of all $g \times g$ diagonal matrices with diagonal entries $\ell^{n_1},\dots,\ell^{n_g}$ for some positive integers $n_1,\dots,n_g$.
%\end{definition}

\begin{theorem}\label{thm:curves-dense-a1}
    If $g \in \Z_{\ge 1}$ and $\ell$ is a
    prime number not equal to the characteristic of $k$, % the base ring,
    then $\bigcup_{A\in W_{\ell,g}} X^{(1)}_A$ is Zariski dense in $\ao^g$.
\end{theorem}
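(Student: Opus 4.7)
The plan is to induct on $g$. The base case $g = 1$ is immediate: for any $n \geq 1$ and any elliptic curve $F/k$, pick a cyclic $\ell^n$-isogeny $F \to E$ and let $C$ be the kernel of its dual $E \to F$; then $\psidiag(E,C) = F$, so $X^{(1)}_{(\ell^n)} = \ao$ for every $n \geq 1$.

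For the inductive step with $g \geq 2$, assume the result for $g-1$, and let $Y \subseteq \ao^g$ be the Zariski closure of $\bigcup_{A \in W_{\ell,g}} X^{(1)}_A$. The plan is to produce a Zariski dense subset $\mathcal{U} \subseteq \ao^{g-1}$ such that the fiber of $Y$ over each point of $\mathcal{U}$ is all of $\ao$; this will force $Y \supseteq \mathcal{U} \times \ao$, which is Zariski dense in $\ao^g$, and hence $Y = \ao^g$. Given $(E_1, \ldots, E_{g-1}) \in X^{(1)}_{A'}$ with $A' = \mathrm{diag}(\ell^{n_1}, \ldots, \ell^{n_{g-1}})$ and $N_0 = \sum n_i$, represented by some $(E, C^0) \in Y_0(\ell^{N_0})$, for each integer $m \geq 1$ I will choose a cyclic subgroup $\tilde C \subseteq E$ of order $\ell^{N_0+m}$ containing $C^0$, and then a cyclic extension $C \supseteq \tilde C$ in $E$ of order $\ell^{2N_0+m}$. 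Both extensions exist because $\car(k) \neq \ell$ makes multiplication by $\ell$ surjective on $E$. A short bookkeeping check using the fact that a cyclic group has a unique subgroup of each order dividing its own shows that $\psidiag(E,C) = (E_1, \ldots, E_{g-1}, E/\tilde C)$ where $A = \mathrm{diag}(\ell^{n_1}, \ldots, \ell^{n_{g-1}}, \ell^{N_0+m})$.

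As $\tilde C$ varies, $E/\tilde C = (E/C^0)/(\tilde C/C^0)$ ranges over all quotients of $F := E/C^0$ by cyclic subgroups of $\ell$-power order---i.e., the full $\ell$-power Hecke orbit of $F$ in $\ao$. When $F$ is not supersingular, this orbit is infinite, because successive cyclic $\ell$-isogenies strictly change the $\ell$-conductor of the endomorphism order, producing infinitely many distinct $j$-invariants. Since $\ao \cong \mathbb{A}^1$, any infinite subset of $\ao$ is Zariski dense, so the fiber of $Y$ above $(E_1, \ldots, E_{g-1})$ is all of $\ao$. Taking $\mathcal{U}$ to be the set of tuples in $\bigcup_{A' \in W_{\ell,g-1}} X^{(1)}_{A'}$ admitting some lift $(E, C^0)$ with $E$ non-supersingular, $\mathcal{U}$ is obtained from the inductively dense set $\bigcup_{A'} X^{(1)}_{A'}$ by removing at worst the supersingular coordinate locus (a finite union of hyperplanes in characteristic $p \neq \ell$, and empty in characteristic $0$). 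By irreducibility of $\ao^{g-1}$, $\mathcal{U}$ remains Zariski dense.

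The main obstacle is the infinitude of the $\ell$-power Hecke orbit for non-supersingular elliptic curves over an algebraically closed field of characteristic $\neq \ell$---a classical fact about the behavior of endomorphism orders under cyclic $\ell$-isogeny. The combinatorial compatibility check $\psidiag(E,C) = (E_1, \ldots, E_{g-1}, E/\tilde C)$ is elementary once one tracks the nested chain $C^0 \subseteq \tilde C \subseteq C$.
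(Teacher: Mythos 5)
Your proof is correct in its essentials but takes a genuinely different route from the paper. The paper builds a single explicit $\ell$-isogeny chain $E_0 \to E_1 \to \cdots$ of CM curves, using Kani's results on ideal subgroups and Kohel's results on orders to pin down $\End(E_i) = \Z + \ell^i\sO_K$ and hence distinctness; every $g$-tuple drawn from this chain then lands in some $X^{(1)}_A$, and density follows because the chain is an infinite subset of $\ao \cong \mathbb{A}^1$. You instead induct on $g$ and argue fiberwise: the base case $g=1$ is immediate, and for $g\ge 2$ you show the closure of $\bigcup_A X^{(1)}_A$ has full fiber $\ao$ over a dense set of points of $\ao^{g-1}$. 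Your route is conceptually cleaner and avoids the CM setup, at the cost of invoking a ``classical fact'' about infinitude of $\ell$-isogeny classes that the paper in effect re-proves from scratch (which is what lets its argument stay self-contained over an arbitrary algebraically closed field).

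Two small inaccuracies, neither fatal. First, you claim that $E/\tilde C = F/(\tilde C/C^0)$ ranges over the \emph{full} $\ell$-power Hecke orbit of $F$. It does not: if $D\subseteq F$ is a cyclic $\ell^m$-subgroup, its preimage $\pi^{-1}(D)$ in $E$ need not be cyclic --- it fails to be cyclic exactly when $D$ meets $\ker\widehat{\pi}$ nontrivially (i.e., the composite $E\to F\to F/D$ backtracks). So the achievable curves are those reached by non-backtracking extensions of the given path $E\to F$, a proper subset of the orbit. Second, ``successive cyclic $\ell$-isogenies strictly change the $\ell$-conductor'' is false for horizontal isogenies at the crater of the $\ell$-volcano. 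The fix for both is the same standard observation: from any vertex of the ordinary $\ell$-isogeny graph, with any one outgoing edge forbidden, there is always a descending edge, so there is an infinite strictly-descending non-backtracking path, giving infinitely many non-isomorphic targets. With that correction your argument goes through: the achievable set is an infinite subset of $\ao\cong\mathbb{A}^1$, hence dense, and the rest of your fiberwise closure argument is fine (the complement you need to excise is contained in the finite set of tuples all of whose coordinates are supersingular, so density of $\mathcal U$ is preserved).
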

\begin{proof}
 % Let $k$ be an algebraically closed field containing the base ring.
 If $\car(k)=0$, let $K$ be any imaginary quadratic field, and if $\car(k)> 0$, let
  $K$ be an imaginary quadratic field in which $\car(k)$ splits and $\ell$ is inert.
  Such a $K$ exists by Lemma \ref{lemma:travis-lemma}.
 Let $E_0$ be an elliptic curve over $k$ with CM by $\sO_K$.
 %Let $\ell$ be  a prime not equal to $\car(k)$ that is inert in $K$. There are infinitely many such primes $\ell$.
 Fix a subgroup $\sC_0$ of $E_0$ of order $\ell$. Let $\pi_0$ denote the quotient map $E_0 \to E_0/\sC_0 =: E_1$. Since $\End(E_0)$ has no ideal of norm $\ell$, $\sC_0$ is not an  ideal subgroup of $E_0$ by Lemma~\ref{lemma:order-ideal-subgroup}. Theorem~\ref{thm:kani-20b} and~\cite[Prop.~5]{kohel1996endomorphism} now imply that $\End(E_1) = \Z + \ell\sO_K$.

 For $i \ge 1$, let $\sC_i$ be any subgroup of $E_i$ of order $\ell$ other than the kernel of the dual isogeny $\pi_{i-1}^\vee$, and let $\pi_i$ be the quotient map $E_i \to E_i/\sC_i =: E_{i+1}$. In order to show that the $E_i$ are non-isomorphic, we will show inductively that $\End(E_{i+1}) = \Z + \ell^{i+1}\sO_{K}$. Since $\End(E_{i}) \subset \End(E_{i-1})$, Theorem~\ref{thm:kani-20b} implies that $\ker \pi_{i-1}^\vee$ is an ideal subgroup. Since $\End(E_{i}) = \Z + \ell^{i}\sO_{K}$, the only ideal of index $\ell$ in $\End(E_{i})$ is $\ell\Z + \ell^i\sO_K$. From Lemma~\ref{lemma:order-ideal-subgroup} and the uniqueness of the ideal of index $\ell$, it follows that $\ker \pi_{i-1}^\vee$ is the unique ideal subgroup of order $\ell$, namely $H(\ell\Z + \ell^i\sO_K)$. The subgroup $\sC_i$ therefore cannot also be an ideal subgroup of $E_i$. By Theorem~\ref{thm:kani-20b}, $\End(E_{i+1})$ must be strictly smaller than $\End(E_{i})$. The latter observation together with \cite[Prop.~5]{kohel1996endomorphism} implies $[\End(E_i):\End(E_{i+1})] = \ell$, and hence $\End(E_{i+1}) = \Z + \ell^{i+1}\sO_{K}$.

  Let $S = \{E_i\}_{i=0}^\infty$. Let $(E_{n_1},\dots,E_{n_g}) \in S^g$ and let $A$ be the diagonal matrix with diagonal entries $\ell^{n_1},\dots,\ell^{n_g}$.  Choose any $n > \max\{n_i\}$, and let $\sC = \ker \pi^{(n)}$. Then $(E_0,\pi^{(n)}) \in Y_0(\ell^n)(k)$ and
 \[
   \psidiag(E_0,\sC) = E_{n_1} \times \cdots \times E_{n_g}.
 \]
Thus, $S \subset \bigcup_A X^{(1)}_A(k)$. Since $S$ is an infinite set of non-isomorphic elliptic curves, $S$ is dense in $\ao$. Therefore $S^g$ is dense in $\ao^g$.
\end{proof}

\begin{theorem}\label{thm:Sg-dense-a1}
  Suppose $g\in\Z_{\ge 2}$ and $k$ is an algebraically closed field. Then there are infinitely many primes $\ell$ such that for all diagonal matrices $A$ and $A'$ in $\detl$, $\sg^{(1)}(k) \cap (X^{(1)}_A \times X^{(1)}_{A'})(k)$ is Zariski dense in $X^{(1)}_A \times X^{(1)}_{A'}$.
\end{theorem}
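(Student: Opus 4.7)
The plan is to run the proof of Theorem~\ref{thm:Sg-dense} essentially verbatim, after replacing $\psimod$ by $\psidiag$ and $X_A$ by $X^{(1)}_A$. The entire argument rested on the counting statement in Proposition~\ref{prop:lim-degree}, followed by a dimension argument on the Zariski closure of the intersection. Both pieces transfer to the $\ao^g$ setting essentially for free, because the relation $\approx$ only depends on the underlying (unpolarized) abelian varieties, and for diagonal $A$ the two maps produce literally the same product $E_1 \times \cdots \times E_g$.

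First I would establish the obvious analogue of Proposition~\ref{prop:lim-degree}: for the same set of primes $\ell$ produced there and any diagonal $A, A' \in \detl$ with $\det A = \ell^n$ and $\det A' = \ell^m$, there is an infinite sequence $x_i \in Y_0(\ell^n)(k)$ such that
\[
  \lim_{i\to\infty}\#\bigl\{y \in Y_0(\ell^m)(k) \colon \psidiag_A(x_i)\approx \psidiag_{A'}(y)\bigr\} = \infty.
\]
The construction is identical: pick an imaginary quadratic $K$ as in the proof of Proposition~\ref{prop:lim-degree} (with $\sO_K^\times = \{\pm 1\}$ and, in positive characteristic, $\car(k)$ split in $K$), a CM elliptic curve $E$ over $k$ with $\End(E)\cong\sO_K$, primes $q_i$ supplied by Lemma~\ref{lem:K-exists}, and subgroups $\sC_i \subset E$ of order $q_i$. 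Put $x_i = (E/(\sC_1+\cdots+\sC_i),\alpha_{\sC_1+\cdots+\sC_i,n})$. For diagonal $A$ with elementary divisors $\ell^{n_1},\ldots,\ell^{n_g}$, we have $\psidiag_A(x_i) = \prod_j E/\alpha^{n_j}(\sC_1+\cdots+\sC_i)$ on the nose, and similarly for $y_\beta$. The same Chinese-remainder count in $\Gt_{q_1}\times\cdots\times\Gt_{q_i}$ and Proposition~\ref{prop:prod-equiv-torsor} produce $g^i$ points $y$ with $\psidiag_A(x_i)\approx \psidiag_{A'}(y)$, yielding the desired limit.

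With this counting statement in hand, I would copy the dimension argument from the proof of Theorem~\ref{thm:Sg-dense}. Let $S$ be the Zariski closure of $\sg^{(1)}(k)\cap (X^{(1)}_A\times X^{(1)}_{A'})(k)$ in $X^{(1)}_A\times X^{(1)}_{A'}$. Since $X^{(1)}_A$ and $X^{(1)}_{A'}$ are images of curves, the ambient product has dimension at most $2$, so it suffices to rule out $\dim S\le 1$. Infinitely many geometric points force $\dim S\ge 1$, and if $\dim S = 1$ then $S$ is a finite union of irreducible curves plus isolated points; deleting the horizontal components (which would contradict the unbounded-fiber conclusion over $X^{(1)}_A$) and isolated points leaves $S'$ on which the projection to $X^{(1)}_A$ is nonconstant on each component, hence of finite total degree, contradicting the unbounded fiber sizes produced above. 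Thus $\dim S = 2$ and $S = X^{(1)}_A\times X^{(1)}_{A'}$. The only place where there is anything to check beyond a literal transcription is confirming that the construction, which lives inside products of elliptic curves, genuinely lands in $X^{(1)}_A$ and $X^{(1)}_{A'}$ rather than merely in their images under $\ao^g\to\ag$; this is immediate from the definition of $\psidiag$ and the diagonality of $A$ and $A'$, so I do not expect any real obstacle.
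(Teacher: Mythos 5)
Your proposal is correct and takes essentially the same approach as the paper, which simply states that the proof of Theorem~\ref{thm:Sg-dense} carries over verbatim after replacing $\ag$ with $\ao^g$ and $\psimod$ with $\psidiag$. You spell out the transfer in more detail, but the content is identical.
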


\begin{proof}
  The proof is the same as the proof of Theorem~\ref{thm:Sg-dense}, only replacing $\ag$ with $\ao^g$, and $\psimod$ with $\psidiag$.
\end{proof}

\begin{proof}[Proof of Theorem~\ref{thm:arbit-char}]
  The proof of Theorem~\ref{thm:arbit-char} is almost the same as that of Theorem~\ref{thm:invariant-c-constant}, replacing $\ag$  with $\ao^g$, and invoking Theorems~\ref{thm:Sg-dense-a1} and  Theorem~\ref{thm:curves-dense-a1} in place of Theorems~\ref{thm:Sg-dense} and \ref{thm:curves-dense}, respectively.
  %with analogs that work with $\ao^g$ instead of $\ag$. Theorem~\ref{thm:curves-dense-a1} is an analogue of Theorem~\ref{thm:curves-dense}, and Theorem~\ref{thm:Sg-dense-a1} is an analog of Theorem~\ref{thm:Sg-dense}.
\end{proof}

\section{Proof of Theorem~\ref{thm:curves-dense}}
\label{sec:step-1}

To prove Theorem~\ref{thm:curves-dense}, we will show that for each non-zero Siegel modular function $f$ on $\ag$, there exists $A \in \detl$ such that the pullback modular function $\psimod^*(f)$ is non-zero.
Proposition \ref{prop:unique-minimizer} and Lemma \ref{lem:rggt-dense} will allow us to choose such a matrix $A$. For background on Siegel modular functions over an arbitrary field, needed for Proposition~\ref{prop:q-expansion}, see~\cite[\S{}V.1]{faltings1990degeneration}.

\begin{lemma}\label{lem:sl-dense}
  If $g\in\Z_{\ge 1}$ and $N\in\Z_{\ge 2}$, then $\Sl_g(\Z[1/N])$ is dense in $\Sl_g(\R)$.
\end{lemma}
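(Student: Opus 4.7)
The plan is to use the fact that $\Sl_g(\R)$ is generated (as an abstract group) by elementary unipotent matrices $E_{ij}(t) = I + t\, e_{ij}$ with $i \ne j$ and $t \in \R$, where $e_{ij}$ denotes the matrix with a $1$ in position $(i,j)$ and zeros elsewhere. This is standard over a field, and follows from Gaussian elimination: any $M \in \Sl_g(\R)$ can be reduced to the identity by a finite sequence of elementary row and column operations, each of which is multiplication by some $E_{ij}(t)$. (For $g = 1$ the group $\Sl_1$ is trivial and the lemma is vacuous, so we assume $g \ge 2$.)

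The second ingredient is the density of $\Z[1/N]$ in $\R$, which follows immediately from $N \ge 2$: the fractions $m/N^k$ with $m \in \Z$ and $k \ge 0$ can approximate any real number to arbitrary precision, since $1/N^k \to 0$ as $k \to \infty$.

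Given these two ingredients, I would finish as follows. Fix $M \in \Sl_g(\R)$ and an open neighborhood $U$ of $M$ in $\Sl_g(\R)$. Write
\[
M = E_{i_1 j_1}(t_1) \cdots E_{i_r j_r}(t_r)
\]
with $t_1, \dots, t_r \in \R$. Multiplication $\Sl_g(\R)^r \to \Sl_g(\R)$ is continuous, so there exists $\varepsilon > 0$ such that whenever $|t_s - t_s'| < \varepsilon$ for all $s$, the product $E_{i_1 j_1}(t_1') \cdots E_{i_r j_r}(t_r')$ lies in $U$. By the density of $\Z[1/N]$ in $\R$, we may choose such $t_s' \in \Z[1/N]$; the resulting product lies in $\Sl_g(\Z[1/N]) \cap U$, proving density.

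There is no real obstacle here, since both supporting facts are classical; the only thing to be careful about is simply invoking the factorization into elementary matrices over the field $\R$ (as opposed to the more subtle situation over general rings where $\Sl_g$ may fail to be generated by elementary matrices).
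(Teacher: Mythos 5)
Your proof is correct and takes essentially the same approach as the paper: factor the given element of $\Sl_g(\R)$ into elementary matrices, approximate the single non-trivial entry of each factor by an element of $\Z[1/N]$, and invoke continuity of matrix multiplication. The paper phrases the factorization in terms of elementary matrices of determinant $\pm 1$ with at most one entry outside $\{0,\pm 1\}$, while you restrict cleanly to transvections $E_{ij}(t)$, but these are the same argument.
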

\begin{proof}
  Let $G \in \Sl_g(\R)$. Factor $G$ as a product of elementary matrices $G = E_n \cdots E_2 E_1$, where $\det E_i = \pm 1$. For each $E_i$, with at most one exception the entries are $0$ or $\pm 1$. Thus there exists a $g \times g$ matrix $E_i'$ with entries in $\Z[1/N]$ and $\det E_i' = \det E_i$ that is arbitrarily close to $E_i$. Let $G' = E_n' \cdots E_1' \in \Sl_g(\Z[1/N])$. Since matrix multiplication is continuous, $G'$ can be made arbitrarily close to $G$.
\end{proof}

\begin{lemma}\label{lem:rggt-dense}
  If $g\in\Z_{\ge 1}$ and $N\in\Z_{\ge 2}$, then the set
\[
\{ rGG^t \colon r \in \R^+ \text{ and } G \in \Sl_g(\Z[1/N]) \}
\]
is dense in the set of $g \times g$ real, symmetric, positive definite matrices.
\end{lemma}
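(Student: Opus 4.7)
The plan is to reduce the density statement to Lemma~\ref{lem:sl-dense} by first showing that every real symmetric positive definite matrix can be written in the form $rGG^t$ with $r \in \R^+$ and $G \in \SL_g(\R)$, and then using continuity of matrix multiplication.

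For the first step, let $M$ be a $g \times g$ real symmetric positive definite matrix. By the spectral theorem (or Cholesky), $M = HH^t$ for some $H \in \GL_g(\R)$. If $\det H < 0$, replace $H$ by $HD$, where $D = \diag(-1,1,\dots,1)$; since $D^2 = I$, this preserves the identity $M = HH^t$ and flips the sign of the determinant, so we may assume $\det H > 0$. Setting $r = (\det H)^{2/g}$ and $G = H/(\det H)^{1/g}$, we obtain $G \in \SL_g(\R)$ and $M = rGG^t$.

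For the second step, apply Lemma~\ref{lem:sl-dense} to produce a matrix $G' \in \SL_g(\Z[1/N])$ arbitrarily close to $G$. Since the map $(X,Y)\mapsto XY^t$ is continuous on $M_g(\R)\times M_g(\R)$ and $r$ is a fixed scalar, the matrix $rG'G'^t$ can be made arbitrarily close to $rGG^t = M$. This produces an element of the given set arbitrarily close to $M$, completing the proof.

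There is no substantial obstacle here; the only point requiring a brief comment is the sign adjustment ensuring $\det H > 0$, which is needed because $\SL_g$ (rather than a larger group like $\mathrm{GL}_g^+$ modulo positive scalars) is what Lemma~\ref{lem:sl-dense} provides us with. Everything else is a direct application of that lemma plus continuity.
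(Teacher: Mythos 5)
Your proof is correct and follows essentially the same strategy as the paper: write $M = rGG^t$ with $G \in \SL_g(\R)$, then approximate $G$ by an element of $\SL_g(\Z[1/N])$ via Lemma~\ref{lem:sl-dense} and use continuity. The only (cosmetic) difference is that you obtain the factorization via Cholesky together with an explicit determinant-sign adjustment, whereas the paper uses the spectral decomposition $D = OAO^t$ and normalizes $O\sqrt{D}$; both are standard routes to the same factorization, and your handling of the sign is a careful touch.
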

\begin{proof}
  Let $A$ be a $g \times g$ real, symmetric, positive definite matrix. Then there exists an orthogonal matrix $O$ such that $D = OAO^t$ is diagonal and has positive entries. Let $H = \det(D)^{-1/2}O\sqrt{D}$ so that $A = \det(D)HH^t$. Note that $H \in \Sl_g(\R)$, so by Lemma~\ref{lem:sl-dense}, there exists $G \in \Sl_g(\Z[1/N])$ arbitrarily close to $H$, and therefore $\det(D)GG^t$ can be made arbitrarily close to $A$.
\end{proof}

\begin{definition}
  A $g \times g$ matrix $Q$ is \emph{half-integral} if $2Q_{ij} \in \Z$ and $Q_{ii} \in \Z$ for all $1 \leq i,j \leq g$, where $Q_{ij}$ denotes the $i,j$ entry of $Q$.
\end{definition}

\begin{lemma}\label{lem:finite-fixed-trace}
 Fix $t\in\R$. Then there are finitely many $g \times g$ symmetric, positive semi-definite, half-integral matrices $Q$ such that $\tr(Q) \leq t$.
\end{lemma}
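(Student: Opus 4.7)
The plan is to bound each entry of $Q$ in terms of $t$, and then use the half-integrality condition to conclude that only finitely many entries are possible.

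First I would bound the diagonal entries. Positive semi-definiteness applied to the standard basis vectors gives $Q_{ii} = e_i^t Q e_i \geq 0$. Since the $Q_{ii}$ are non-negative integers summing to at most $t$, there are only finitely many choices for the diagonal, and moreover each $Q_{ii}$ satisfies $0 \leq Q_{ii} \leq t$.

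Next I would bound the off-diagonal entries. For $i \neq j$, the $2 \times 2$ principal submatrix
\[
  \begin{pmatrix} Q_{ii} & Q_{ij} \\ Q_{ij} & Q_{jj} \end{pmatrix}
\]
is positive semi-definite, so its determinant is non-negative, giving $Q_{ij}^2 \leq Q_{ii} Q_{jj} \leq t^2$. Hence $|Q_{ij}| \leq t$. Since $2 Q_{ij} \in \Z$, there are at most $4t + 1$ choices for $Q_{ij}$.

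Combining these bounds, there are at most a finite number of choices for each of the $g^2$ entries of $Q$, so only finitely many such $Q$ exist. The main (mild) obstacle is just remembering to invoke positive semi-definiteness both on the diagonal and on $2\times 2$ principal submatrices; no deeper input is required.
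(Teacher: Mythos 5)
Your proof is correct, and it takes a genuinely different (though equally elementary) route from the paper's. The paper bounds the Frobenius norm all at once via the eigenvalue identity $\sum_{i,j} Q_{ij}^2 = \tr(Q^2) = \sum_i \lambda_i^2 \le \bigl(\sum_i \lambda_i\bigr)^2 = \tr(Q)^2 \le t^2$, concluding that $Q$ lies in a ball of radius $t$ in $\R^{g^2}$ and hence that only finitely many half-integral $Q$ are possible. You instead bound each entry individually: non-negativity of the diagonal entries plus the trace bound controls the $Q_{ii}$, and non-negativity of the $2\times 2$ principal minors controls the $Q_{ij}$. Both arguments are clean one-liners once stated; yours avoids eigenvalues entirely and is slightly more hands-on, while the paper's is a touch slicker since it packages the whole matrix into a single norm inequality. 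Either would serve as a replacement proof.
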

\begin{proof}
  Suppose $Q$ is a symmetric, positive semi-definite, half-integral matrix with $\tr(Q) \leq t$. Let $\lambda_1,\dots,\lambda_g$ be the eigenvalues of $Q$. Since $\lambda_i \geq 0$, we have
  \[
    \sum_{1 \leq i,j \leq g} Q_{ij}^2 = \tr(Q^2) = \sum_{i=1}^g \lambda_i^2 \leq \left(\sum_{i=1}^g \lambda_i\right)^2 = \tr(Q)^2 \leq t^2.
  \]
  This shows that $Q$, as a vector in $\R^{g^2}$, lies in the ball of radius $t$. Thus the number of possible $Q$ is bounded by the number of half-integral points in the ball of radius $t$ in $\R^{g^2}$, which is finite.
\end{proof}

% \begin{lemma}\label{lem:bound-trace-finite}
%  For any positive definite symmetric real $g \times g$ matrix $A$ and $t \in \R$, the set of symmetric, positive semi-definite, half-integral $g \times g$ matrices $Q$ such that $\tr(AQ) \leq t$ is finite.
% \end{lemma}
% \begin{proof}
%  Recall that $\langle x,y \rangle = \tr(xy^t)$ is an inner product on the space of real matrices. Moreover, if $B$ is any positive semidefinite matrix, then $\tr(B^2) \leq \tr(B)^2$. So
%  \begin{align*}
%    \tr(BQ)^2 &\leq \tr(B^2)\tr(Q^2)
%    &&\text{by Cauchy–Schwarz}
%    \\
%    &\leq \tr(B)^2\tr(Q)^2
%    &&\text{By positive semidefinite.}
%    %\tr(B^2) = \sum \lambda_i^2 \leq \left(\sum \lambda_i\right)^2 = \tr(B)^2.
%  \end{align*}
%  Therefore, if $\tr(AQ) \leq t$, then
%  \[
%    \tr(Q) = \tr(A^{-1}AQ) \leq \tr(A^{-1})\tr(AQ) \leq \tr(A^{-1})t.
%  \]
%  The claim then follows from Lemma~\ref{lem:finite-fixed-trace}.
% \end{proof}

\begin{proposition}\label{prop:unique-minimizer}
  Suppose $g\in\Z_{\ge 1}$ and $S$ is a nonempty set of $g \times g$ symmetric, positive semi-definite, half-integral matrices. Then there exists an open cone of $g \times g$, positive definite, real, symmetric matrices $A$ such that $\displaystyle\min_{Q \in S}\tr(AQ)$ is achieved by a unique $Q\in S$.
\end{proposition}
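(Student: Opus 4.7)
The plan is to reduce the infinite minimization over $S$ to a minimization over a finite subset by exploiting the positive definiteness of $A$ and Lemma~\ref{lem:finite-fixed-trace}, and then break ties by a transversality argument.

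First, I would fix a closed ball $K$ in the space of symmetric $g \times g$ matrices, centered at the identity and small enough that every element of $K$ is positive definite. By compactness the minimum eigenvalue attains a positive lower bound $c$ on $K$, so $\tr(AQ) \ge c\tr(Q)$ for every $A \in K$ and every positive semi-definite $Q$. Fixing any $Q_0 \in S$ and setting $M := \max_{A \in K}\tr(AQ_0)$, every $Q \in S$ that is a minimizer of $\tr(A\,\cdot\,)$ for some $A \in K$ must satisfy $\tr(Q) \le M/c$. Lemma~\ref{lem:finite-fixed-trace} then yields a finite subset $S' \subset S$ that contains all such minimizers, so for each $A \in K$ the minimum $\min_{Q \in S}\tr(AQ)$ is attained and lies in $S'$. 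This reduction is the one place positive definiteness is essential, and is the main nontrivial point of the argument.

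Next, for each pair of distinct $Q_1, Q_2 \in S'$, the locus $H_{Q_1,Q_2} := \{A : \tr(AQ_1) = \tr(AQ_2)\}$ is a proper hyperplane in the space of symmetric matrices, because $A \mapsto \tr(A(Q_1 - Q_2))$ is a nonzero linear functional. Removing the finite union $\bigcup_{Q_1 \ne Q_2 \in S'} H_{Q_1,Q_2}$ from the interior of $K$ leaves an open, dense subset $U$ of positive definite matrices on which the finitely many values $\tr(AQ)$, $Q \in S'$, are pairwise distinct; hence for every $A \in U$ the minimum of $\tr(AQ)$ over $Q \in S$ is attained by a unique element of $S$.

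Finally, since the property of having a unique minimizer is invariant under scaling $A$ by a positive real, I would pick a small open ball $U_0 \subset U$ and form the open cone $\{rA : r > 0,\ A \in U_0\}$, which provides the desired open cone of positive definite symmetric matrices on which $\min_{Q \in S}\tr(AQ)$ is achieved by a unique $Q \in S$.
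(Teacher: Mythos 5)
Your proof is correct, and the overall strategy—reduce to a finite candidate set via Lemma~\ref{lem:finite-fixed-trace}, then delete finitely many hyperplanes to break ties, then scale to an open cone—matches the paper's. The difference lies in how the reduction to a finite set is achieved. The paper works directly with the trace: it sets $t_0 = \min_{Q \in S}\tr(Q)$, forms the finite set $S_0 = \{Q \in S : \tr(Q) = t_0\}$, and uses the gap $t_1 - t_0$ to the next-smallest trace value to bound the allowed perturbation $E$ of the identity, so that for $A = r(I+E)$ the global minimum is forced to lie in $S_0$. You instead fix a compact ball $K$ around $I$, take the uniform lower bound $c$ on the least eigenvalue over $K$, and deduce that any $Q \in S$ that can possibly minimize $\tr(A\,\cdot\,)$ for some $A \in K$ has $\tr(Q)$ bounded by $M/c$ for a fixed constant $M$; Lemma~\ref{lem:finite-fixed-trace} then gives the finite set $S'$. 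Your argument is somewhat more robust in flavour—it would work for any compact neighbourhood of any positive definite matrix, not just perturbations of $I$—while the paper's version is more explicit about the shape of the cone, identifying the minimizer as an element of the smaller set $S_0$ of trace-minimizers. Both are correct and elementary, and both rely on the same two ingredients (finiteness from the trace bound, genericity from hyperplane removal).
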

\begin{proof}
Let $t_0 = \min_{Q \in S}\tr(Q)$.
  Since $\{Q \in S \colon \tr(Q) \leq t\}$ is finite for all $t \in \R$ by Lemma~\ref{lem:finite-fixed-trace},
  there is a gap between $t_0$ and the next smallest value $t_1$ of $\tr(Q)$ for $Q \in S$. Let $S_0 = \{Q \in S \colon \tr(Q) = t_0\}$.

  Let $\mathfrak{S}$ be the set of real symmetric $g \times g$ matrices $E$ such that
  \begin{enumerate}
    \item $\tr(EQ_i) \neq \tr(EQ_j)$ for all distinct $Q_i,Q_j \in S_0$,
    \item $E$ is positive definite, and
%    \item $I+E$ is positive definite (this follows from (2), so omit it),
    \item $\tr(EQ) < t_1 - t_0$ for all $Q\in S_0$.
  \end{enumerate}
  To see that $\mathfrak{S}$ is non-empty, observe that (1) describes the complement of a finite union of hyperplanes, which is a dense set. Therefore there is a real symmetric matrix $E$ satisfying both (1) and (2). Scaling by a sufficiently small $\varepsilon > 0$, we can guarantee (3) as well.

  Let $C = \R^+(I + \mathfrak{S})$, where $I$ is the $g \times g$ identity matrix. By definition, $C$ is a cone. The set $C$ is open because $\mathfrak{S}$ is defined as a finite intersection of open subsets.

  Suppose $A \in C$; that is, $A = r(I+E)$ for some $r \in \R^+$ and $E \in \mathfrak{S}$. Let $Q \in S$. Since $E$ and $Q$ are positive semi-definite, $\tr(EQ)\ge 0$. Property (3) now implies that $\tr((I+E)Q)$ is minimized only when $Q \in S_0$. Hence by (1) we have that $\tr((I+E)Q)$ is minimized for a unique $Q \in S$. Since $E$ is positive definite, so is $I+E$, and hence $I + E$ satisfies the conclusion of the proposition. By linearity of the trace, $A$ also satisfies the conclusion of the proposition, and the desired result follows.
\end{proof}

%In ?? we will compute pullbacks of modular forms. It is convenient (instead, say what we mean?) to reduce the computation [what computation?] to one over $\C$.
Let $\hh$ denote the complex upper half plane and let $\hh_g$ denote the degree $g$ Siegel upper half space. Recall that $\Sp_{2g}(\Z)$ (resp., $\Gamma_0(N)$) acts on $\hh_g$ (resp., $\hh$) by fractional linear transformations,
 $Y_0(N) = \hh/\Gamma_0(N)$, and $\ag = \hh_g/\Sp_{2g}(\Z)$.
\begin{lemma}\label{lem:psi-A-n}
Suppose that $A$ is a $g \times g$ symmetric, positive definite integer matrix and $N = \det A$.
Then the map \[
  \hh \to \hh_g, \quad  \tau \mapsto \tau A
\]
%descends to
induces a morphism of $\C$-schemes
$
  \psimodt: Y_0(N) \longrightarrow \ag.
$
\end{lemma}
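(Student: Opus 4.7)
The plan is to verify the lemma in three steps: first check that $\tau \mapsto \tau A$ is a well-defined map $\hh \to \hh_g$; second establish $\Gamma_0(N)$-equivariance so that it descends to a holomorphic map on the quotients; and finally upgrade this analytic map to a morphism of $\C$-schemes.

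Step one is immediate: $\tau A$ is symmetric because $A$ is symmetric, and its imaginary part $\Im(\tau)\,A$ is positive definite because $\Im(\tau) > 0$ and $A$ is positive definite. For step two, given $\gamma = \begin{pmatrix} a & b \\ c & d \end{pmatrix} \in \Gamma_0(N)$, I would consider the block matrix
\[
  \hat\gamma \,:=\, \begin{pmatrix} aI_g & bA \\ cA^{-1} & dI_g \end{pmatrix}.
\]
Since $N = \det A$ divides $c$ and $NA^{-1}$ is the adjugate of $A$ (an integer matrix), the block $cA^{-1} = (c/N)\cdot NA^{-1}$ is integral, so $\hat\gamma \in M_{2g}(\Z)$. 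A direct block computation using $A = A^T$ and $ad - bc = 1$ then verifies the three block identities characterizing $\Sp_{2g}(\Z)$, so $\hat\gamma \in \Sp_{2g}(\Z)$. Moreover, $cA^{-1} \cdot \tau A + dI_g = (c\tau + d)I_g$, hence
\[
  \hat\gamma \cdot (\tau A) \,=\, (a\tau A + bA)\bigl((c\tau + d) I_g\bigr)^{-1} \,=\, \frac{a\tau + b}{c\tau + d}\,A \,=\, (\gamma\tau)\,A,
\]
and the map descends to a holomorphic map $Y_0(N)^{\mathrm{an}} \to \ag^{\mathrm{an}}$.

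For step three, I would invoke the coarse moduli property of $\ag$. The family of complex tori $\C^g/(\Z^g + \tau A\,\Z^g)$ over $\hh$, equipped with the principal polarization induced by the standard symplectic pairing on $\Z^{2g}$, is $\Gamma_0(N)$-equivariant by step two, so it descends to an algebraic family of principally polarized abelian varieties over $Y_0(N)$ (after first passing through a suitable level cover on which the moduli problem becomes fine, and then re-descending). The universal property of the coarse moduli space $\ag$ then yields the desired morphism of $\C$-schemes $\psimodt: Y_0(N) \to \ag$. The main obstacle is the symplecticity verification in step two: although routine, it requires careful block bookkeeping and uses both the symmetry of $A$ and the relation $ad - bc = 1$ in essential ways.
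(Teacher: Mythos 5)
Your proposal matches the paper's proof: the paper defines the same block matrix $\begin{bmatrix} aI_g & bA \\ cA^{-1} & dI_g \end{bmatrix}$, observes that $\tau A \in \hh_g$, and asserts the symplectic and equivariance identities as ``a direct computation,'' which is exactly what your steps one and two carry out. Your step three, upgrading the holomorphic descent to an algebraic morphism via the coarse moduli property and a level cover, correctly fills in a passage the paper leaves implicit.
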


\begin{proof}
Let
\[
\sigma = \begin{bmatrix} a & b \\ c & d \end{bmatrix} \in \Gamma_0(N) \text{ and } M = \begin{bmatrix} aI_g & bA \\ cA^{-1} & dI_g \end{bmatrix}
\]
where $I_g$ denotes the $g \times g$ identity matrix.
Suppose $\tau \in \hh$. Then $\tau A$ is symmetric and its imaginary part is positive definite, so $\tau A \in \hh_g$.
A direct computation shows that $M \in\Sp_{2g}(\Z)$
% it is integral because \det(A) | c, so cA^{-1} is integral.
% M is in Sp because then M^t*\Omega*M = M
% where \Omega = \begin{matrix} 0 & I \\ -I & 0 \end{matrix}
and  $\sigma(\tau)A = M(\tau A)$.
%(It's well-defined. Why is it a morphism?)
\end{proof}

Recall the definition of $\psimod$ in \eqref{def:psimoddef}.

%In fact, $\psimodt$ is none other than $\psimod$, from which one can obtain an alternative proof of Lemma \ref{lem:psi-A-n}.
\begin{proposition}\label{prop:psimod-over-c}
Suppose that $g\in\Z_{\ge 1}$, $\ell$ is a prime, $A \in \detl$, and
 the base scheme is $\C$. Then $\psimod = \psimodt$.
\end{proposition}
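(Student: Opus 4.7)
The plan is to verify the equality on complex points by an explicit lattice computation. Since both $\psimod$ and $\psimodt$ are morphisms of $\C$-schemes, it suffices to show that $\psimod(E_\tau,C_\tau) = \psimodt(\tau)$ in $\ag(\C)$ for every $\tau \in \hh$. Parametrize $Y_0(N)(\C)$ by $\Gamma_0(N)\backslash\hh$, so that $\tau$ represents the pair $(E_\tau,C_\tau)$ with $E_\tau = \C/(\Z+\tau\Z)$ and $C_\tau$ the cyclic subgroup of order $N = \det A$ generated by the image of $1/N$.

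First I compute the underlying complex torus of $B = E_\tau^g/((\ker\lambda_A)\cap C_\tau^g)$. Passing to universal covers, $E_\tau^g = \C^g/\Lambda$ with $\Lambda = \Z^g + \tau\Z^g$, and $C_\tau^g = (\tfrac{1}{N}\Z^g + \Lambda)/\Lambda$. The endomorphism of $E_\tau^g$ induced by $A$ preserves $\Lambda$ since $A$ has integer entries, and $\lambda_A$ equals $\lambda_0 \circ A$ where $\lambda_0$ is the canonical principal polarization; hence $\ker\lambda_A = A^{-1}\Lambda/\Lambda$. Using $NA^{-1} = \mathrm{adj}(A) \in M_g(\Z)$, an element $\tfrac{1}{N}m' \in C_\tau^g$ lies in $\ker\lambda_A$ iff $\tfrac{1}{N}Am' \in \Lambda$ iff $m' \in \mathrm{adj}(A)\Z^g$, so $(\ker\lambda_A) \cap C_\tau^g = A^{-1}\Z^g/\Z^g$ and $B$ has lattice $L_B = A^{-1}\Z^g + \tau\Z^g$. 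Multiplication by $A$ on $\C^g$ then supplies an isomorphism $\C^g/L_B \to \C^g/(\Z^g + \tau A \Z^g)$, identifying the abelian variety $B$ with the one attached to the Siegel point $\tau A \in \hh_g$, which is the underlying variety of $\psimodt(\tau)$.

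It remains to compare polarizations. The polarization $\lambda$ on $B$ is characterized by $\pi^*\lambda = \lambda_A$, so its Riemann form on $L_B$ is the unique extension of the Riemann form $E_A(m_1+\tau n_1,\, m_2+\tau n_2) = m_1^T A n_2 - m_2^T A n_1$ on $\Lambda$. Writing elements of $L_B$ as $A^{-1}u + \tau n$ with $u,n \in \Z^g$, the form
\[
(A^{-1}u_1 + \tau n_1,\, A^{-1}u_2 + \tau n_2)\ \longmapsto\ u_1^T n_2 - u_2^T n_1
\]
restricts to $E_A$ on $\Lambda$ (set $u = Am$, use $A = A^T$) and is unimodular in the basis $\{A^{-1}e_i,\tau e_j\}$. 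Under the change of variable $v \mapsto Av$ it transports to the standard symplectic pairing on $\Z^g \oplus \Z^g$, which is the Riemann form of the canonical principal polarization attached to $\tau A$. Therefore $\psimod(E_\tau,C_\tau) = \psimodt(\tau)$.

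The main obstacle is not conceptual but conventional: one must fix compatible choices of the generator of $C_\tau$, the sign of the Riemann form associated to a polarization, and the convention by which $A$ gives the endomorphism $\lambda_A$ (equivalently, the identification of $\lambda_A$ with $\lambda_0 \circ A$). Once these are pinned down consistently, the entire proof reduces to the lattice arithmetic sketched above.
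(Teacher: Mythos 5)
Your proof is correct and follows essentially the same lattice-theoretic strategy as the paper's: compute the lattice of $B$, recognize that multiplication by $A$ identifies $\C^g/(A^{-1}\Z^g + \tau\Z^g)$ with $\C^g/(\Z^g + \tau A\Z^g)$, and then transport the Riemann forms under this map. Your computation of $(\ker\lambda_A)\cap C^g$ via the adjugate is slightly more explicit than the paper's (which simply notes $\ker\rho = (\ker\lambda_A)\cap C^g$ for the natural isogeny $\rho$ coming from the inclusion of lattices), and your polarization comparison writes out the form in both variables rather than using the shorthand $\sE(u,\Omega v)=u^t v$, but the uniqueness-of-descent argument for matching the polarizations is the same in both.
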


\begin{proof}
Let  $N = \det A$.  Suppose $(E, C) \in Y_0(N)$. There exists $\tau \in \hh$ such that $E = \C/(\Z + \tau \Z)$ and $C = \langle 1/N \rangle$. We first show that $\psimodt(E,C) \approx \psimod(E,C)$.

  Let $\Lambda = \Z^g + \tau \Z^g \subset \C^g$. Then $\C^g/\Lambda = E^g$.
  %, where projection on the $i$th complex coordinate is the same as projection onto the $i$th factor of $E^g$.
  Let
  \[
    \tla = \Z^g + \tau A \Z^g \quad \text{ and } \quad     \Lambda_A = A^{-1}\Z^g + \tau \Z^g.
  \]
By definition, $\psimodt(E,C) \approx \C^g/\tla$.
%  \[
%    \Lambda_A = A^{-1}\Z^g + \tau \Z^g.
%  \]
%  Multiplication by $A^{-1}: \C^g \to \C^g$ induces an isomorphism
%  \[
 %   \C^g/\tla \xrightarrow{\sim} \C^g/\Lambda_A.
 % \]
 Since $\Lambda \subset \Lambda_A$, there is a natural isogeny
  \[
    \rho: E^g = \C^g/\Lambda \to \C^g/\Lambda_A.
  \]

  Multiplication by $A: \C^g \to \C^g$ induces an isomorphism
  \[
     \C^g/\Lambda_A \xrightarrow{\sim}  \C^g/\tla
  \]
and an isogeny
  \[
    \lambda_A: \C^g/\Lambda \longrightarrow \C^g/\Lambda,
  \]
  where $\lambda_A$ is the natural isogeny $E^g \to E^g$ induced by the matrix $A$. Then
\[
\ker \lambda_A = A^{-1}\Z^g + \tau A^{-1} \Z^g \pmod{\Lambda}
\]
%With our identification $E = \C/(\Z + \tau\Z)$, recall that the distinguished subgroup $C \subset E$ is $\langle 1/N \rangle$. Then $C^g \subset E^g$ is $\frac{1}{N}\Z^g \pmod{\Lambda}$. It follows that
and
\[
\ker \rho = (\ker \lambda_A) \cap C^g. % = (\ker \lambda_A) \cap C^g.
\]
Now %$\psimod(E,C) \approx E^g/((\ker \lambda_A) \cap C^g) = E^g/(\ker \rho)$, it follows that
\[
  \psimod(E,C) \approx E^g/((\ker \lambda_A) \cap C^g) = E^g/(\ker \rho)
  \cong \C^g/\Lambda_A
  \cong \C^g/\tla
  \approx \psimodt(E,C).
\]

  Elements of $\ag$ can be viewed as pairs $(\C^g/(\Z^g + \Omega \Z^g,\sE)$ where
  %polarization on a $g$-dimensional complex abelian variety $\C^g/(\Z^g + \Omega \Z^g)$ with
  $\Omega \in \hh_g$ and where $\sE: \C^g \times \C^g \to \R$ is the alternating Riemann form that
 satisfies $\sE(u,\Omega v) = u^tv$ for all $u,v \in \R^g$.
See for example \cite{rosen-avc}.
%The standard form $\sE$ is the unique map satisfying $\sE(u,\Omega v) = u^tv$ for all $u,v \in \R^g$.
  %We call this form the \emph{Siegel alternating form} arising from $\Omega \in \hh_g$; this form specifies the principal polarization for the class of $\Omega$ in $\ag$. %For example, if $\Omega = \tau I$ for $\tau \in \hh$, so that the abelian variety is $E^g$, then the Siegel alternating form represents the product principal polarization.

The (non-principal) polarization $\lambda_A$ on $E^g = \C^g/\Lambda$ corresponds to the alternating Riemann form that satisfies $\sE_A(u,\tau v) = u^tAv$ for all $u,v \in \R^g$. Since $\sE_A(\Lambda_A \times \Lambda_A) \subseteq \Z$, it follows that $\sE_A$ is an alternating Riemann form for $\C^g/\Lambda_A$ as well.
%Write $\lambda$ for the polarization of $\C^g/\Lambda_A$ given by $E_A$.
Since the polarization $\sE_A$ descends from $\lambda_A$, and the polarization coming from $\psimod$ is the unique polarization also descending from $\lambda_A$, we have
\[
  \psimod(E,C) = (\C^g/\Lambda_A, \sE_A).
\]
%Let $\tilde{\lambda}$ be
We have $\psimodt(E,C) = (\C^g/\tla, \tilde{\sE})$ where
 $\tilde{\sE}$ denotes the %polarization coming from the
%Siegel alternating form for $\tau A$; that is, given by the
alternating Riemann form %$E_{\tau A}$
that satisfies %$E_{\tau A}(u, \tau A v) = u^tv$
$\tilde{\sE}(u, \tau A v) = u^tv$ for all $u,v \in \R^g$. If $u, v \in \R^g$, then
%\begin{align*}
$$  \tilde{\sE}(u, \tau A v) = u^tv
                          = \sE_A(A^{-1}u, \tau v)
                          = \sE_{A}(A^{-1}u, A^{-1}\tau Av).$$
%\end{align*}
Thus multiplication by $A$ on $\C^g$ induces an isomorphism of polarized abelian varieties
\[
  \psimod(E,C) = (\C^g/\Lambda_A, \sE_A) \xrightarrow{\sim} (\C^g/\tla, \tilde{\sE}) = \psimodt(E,C),
\]
%Since $\psimodt(E,C) = (\C^g/\tla, \tilde{\lambda})$,
as desired.
 % Further, this polarization is specified by letting the columns be a symplectic basis for the associated Riemann form $H: \C^g \times \C^g \to \C$; this is precisely the product polarization.
 %  The abelian variety $B$ associated to $\psic(\tau)$ is $\C^g/\tilde{\Lambda}$, where $\tilde{\Lambda}$ is the lattice generated by the columns of
 %  \[
 %    \begin{bmatrix}
 %      I | \tau A
 %    \end{bmatrix},
 %  \]
 %  and with the associated principal polarization.
\end{proof}

%\begin{definition}
Suppose $g$ is a positive integer.
   Let $\sD$ denote the set of $g \times g$ symmetric, half-integral, positive semidefinite matrices.
   If $f$ is a Siegel modular form on $\ag$, $\ell$ is a prime not equal to the characteristic of the base, and $A\in \detl$, let $\psimod^*(f) = f \circ \psimod$ denote the pullback modular form on $X_0(\det(A))$.
     Note that the cusp $\infty$ on $X_0(\det(A))$ is characterized by the fact that the universal elliptic curve has type $I_1$ reduction there.
%\end{definition}

\begin{proposition}\label{prop:q-expansion}
  Suppose  $g \in \Z_{\ge 1}$, $\ell$ is a prime number,
  $k$ an algebraically closed field with $\car(k) \neq \ell$, % is invertible in the base ring $R$
  and $f$ a $k$-valued Siegel modular form on $\ag$.
  %Let $\infty \in X_0(N)$ be the unique cusp with type $I_1$ reduction.
  Then there is a function  $c : \sD \to k$ such that for all % but finitely many
  $A \in \detl$, the $q$-expansion of
  %the modular form
  $\psimod^*(f)$ at $\infty$ is
  \begin{equation}
    \sum_{Q \in \sD} c(Q) q^{\tr(AQ)}.\label{eq:q}
  \end{equation}
%  where the sum is over symmetric half-integral positive semidefinite matrices $Q$.
\end{proposition}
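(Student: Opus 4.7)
The plan is to take $c(Q)$ to be the $Q$th Fourier coefficient of $f$ in the $q$-expansion of $f$ at the standard cusp of a toroidal compactification of $\ag$, and then to verify that for every $A\in\detl$ the pullback $\psimod^*(f)$ has $q$-expansion at $\infty$ equal to $\sum_{Q\in\sD} c(Q)q^{\tr(AQ)}$. Since $\car(k)\neq\ell$ and $\det(A)$ is a power of $\ell$, the map $\psimod$ and its extension to the toroidal compactifications are defined over $\Z[1/\ell]$. By the theory of Siegel modular forms over an arbitrary base (\cite[\S V.1]{faltings1990degeneration}), $f$ has a canonical $q$-expansion $f = \sum_{Q\in\sD} c(Q)q^Q$ with $c(Q)\in k$, and this defines the function $c:\sD\to k$ once and for all, independent of $A$.

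Next I would compute the pullback map on formal neighborhoods of the cusps. The cusp $\infty$ of $X_0(\det A)$ (characterized by $I_1$ reduction of the universal elliptic curve) maps to the standard cusp of the toroidal compactification of $\ag$, and the induced map on completed local rings is a $k$-algebra homomorphism from the formal $q^Q$-series ring to $k[[q]]$, uniquely determined by its action on the generators $q^Q$. It therefore suffices to show that this map sends $q^Q \mapsto q^{\tr(AQ)}$. Over $\C$, Proposition~\ref{prop:psimod-over-c} identifies $\psimod$ with $\psimodt$, and substituting $\Omega=\tau A$ in the complex Fourier expansion of $f$ yields
$$f(\tau A) \;=\; \sum_{Q\in\sD} c(Q)\exp\!\bigl(2\pi i\,\tr(Q\tau A)\bigr) \;=\; \sum_{Q\in\sD} c(Q)\, q^{\tr(AQ)},$$
using $\tr(Q\tau A)=\tau\,\tr(AQ)$ and $q=e^{2\pi i\tau}$.

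The main obstacle is propagating this complex analytic identity to an identity of formal $q$-expansions over an arbitrary field $k$ with $\car(k)\neq\ell$. The cleanest route is to observe that the pullback on formal neighborhoods at the cusps is a morphism of $\Z[1/\ell]$-algebras determined by its values on the generators $q^Q$; since the identification $q^Q\mapsto q^{\tr(AQ)}$ has just been verified after base change along $\Z[1/\ell]\hookrightarrow \C$, it holds over $\Z[1/\ell]$ and hence over $k$. Applying this pullback term by term to $f=\sum_Q c(Q)q^Q$ then produces $\psimod^*(f)=\sum_Q c(Q)q^{\tr(AQ)}$, which is a well-defined formal power series in $q$ because for each $t\ge 0$ only finitely many $Q\in\sD$ satisfy $\tr(AQ)\le t$, by Lemma~\ref{lem:finite-fixed-trace} together with the positive definiteness of $A$.
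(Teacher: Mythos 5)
Your proof follows essentially the same route as the paper's: express $f$ via its Fourier expansion $\sum_Q c(Q)q^Q$, compute the pullback over $\C$ using Proposition~\ref{prop:psimod-over-c} to get $q^Q \mapsto q^{\tr(AQ)}$, descend the identity to $\Z[1/\ell]$, base-change to $k$, and invoke Lemma~\ref{lem:finite-fixed-trace} to see the resulting series in $q$ is well defined.

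The one place where you diverge from the paper --- and where your argument is thinner --- is the descent step. You assert that ``$\psimod$ and its extension to the toroidal compactifications are defined over $\Z[1/\ell]$'' and then treat the pullback on completed local rings at the cusps as a $\Z[1/\ell]$-algebra map. But this extension is precisely what needs an argument; it does not follow merely from $\det(A)$ being a power of $\ell$, and the toroidal compactification is not canonical, so extending a morphism to its boundary over $\Z[1/\ell]$ is not automatic. The paper sidesteps this by composing with the contraction to the \emph{minimal} compactification, identifying it as $\mathrm{Proj}$ of the graded algebra of Siegel modular forms (likewise for $X_0(N)$), noting both graded algebras are finitely generated over $\Z[1/\ell]$ and that $\psimod^*$ is Galois-equivariant on them, and concluding that the morphism descends to $\Z[1/\ell]$; the formula over $\Z[1/\ell]$ then follows by functoriality of $q$-expansions. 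If you replace your bare assertion about the toroidal extension by this (or an equivalent) descent argument, the two proofs become essentially identical; the rest of your reasoning is sound.
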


\begin{proof}
  We first prove the result over $\C$. Let $\overline{\ag}$ be a toroidal compactification of $\ag$. Let $A \in \detl$ and let $N=\det(A)$. The rational map $\psimod$ extends to a morphism $X_0(N) \to \overline{\ag}$. Let $\tau_{ij}$ (resp.~$\tau$) with $1 \leq i \leq j \leq g$ be the standard coordinates for $\hh_g$ (resp.~$\hh$), and let $q_{ij}$ (resp.~$q$) be $e^{2\pi i \tau_{ij}}$ (resp.~$e^{2\pi i \tau}$). By Proposition~\ref{prop:psimod-over-c}, $\psimod^*(q_{ij}) = q^{A_{ij}}$. Setting $q_{ji} = q_{ij}$, the modular form $f$ has an expansion of the form
    \[
      \sum_{Q \in \sD} c(Q) \prod_{i,j=1}^g q_{ij}^{Q_{ij}}.
    \]
    Thus at the cusp $\infty$ of $X_0(N)$, the $q$-expansion of $\psimod^*(f)$ is
%  \begin{align*}
    $$\sum_{Q \in \sD} c(Q) \prod_{i,j=1}^g q^{A_{ij}Q_{ij}} = \sum_{Q \in \sD} c(Q) q^{\sum_{i,j=1}^g A_{ij}Q_{ij}}
    = \sum_{Q \in \sD} c(Q) q^{\tr(AQ)}.$$
%  \end{align*}
 % The independence from $A$ is immediate.

    % Suppose that the genus of $X_0(N)$ is at least $2$; this holds for all but finitely many $A \in \detl$.
    Let $\agm$ be the minimal compactification of $\ag$. There is a contraction map $\overline{\ag} \to \agm$. Composing $\psimod$ with this contraction, we obtain a morphism $X_0(N) \to \agm$, which we will also call $\psimod$. By~\cite[Theorem V.2.3]{faltings1990degeneration} we have 
    $$\agm = \textrm{Proj}(\oplus_{j=0}^\infty \Gamma(\overline{\ag}, \omega^j)),$$ 
    where $\omega^j$ is the sheaf of Siegel modular forms of weight $j$. Since % the genus of $X_0(N)$ is at least $2$,
    the sheaf of weight $2$ modular forms is ample, we have 
    $$X_0(N) = \textrm{Proj}(\oplus_{j=0}^\infty \Gamma(X_0(N), \omega_0^j)),$$ 
    where $\omega_0^j$ is the sheaf of modular forms of weight $j$ on $X_0(N)$. Therefore the morphism $\psimod: X_0(N) \to \agm$ is characterized by the pullback map
    \[
      \psimod^*: \bigoplus_{j=0}^\infty \Gamma(\overline{\ag}, \omega^j) \to \bigoplus_{j=0}^\infty \Gamma(X_0(N), \omega_0^j).
    \]
    Both graded algebras are finitely generated over $\Z[1/\ell]$, and furthermore $\psimod^\ast$ is Galois equivariant. Hence $\psimod$ descends to a morphism of schemes over $\Z[1/\ell]$. By functoriality of $q$-expansions, $\psimod^*$ over $\Z[1/\ell]$ is given by formula~\eqref{eq:q}. Now base-extend to $k$. Let $f$ be a Siegel modular form on $\ag$. By~\cite[Theorem V.2.3]{faltings1990degeneration}, $\agm$ has only one zero-dimensional cusp, so $f$ has a unique $q$-expansion, say $\sum_{Q \in \sD} c(Q) \prod_{i,j=1}^g q_{ij}^{Q_{ij}}$. Since $\psimod^*$ over $k$ is obtained by base-extension from $\Z[1/\ell]$, formula~\eqref{eq:q} holds over $k$ as well.
\end{proof}

\begin{proof}[Proof of Theorem~\ref{thm:curves-dense}]
  % Let $R$ denote the base ring. 
  We claim that $\cup X_A$ is Zariski dense in $\ag$. The claim is trivial when $g = 1$, so we assume $g \geq 2$. It suffices to show that for all non-zero Siegel modular functions $f: \ag \to k$, there exists $A \in \detl$ such that the pullback $\psimod^*(f)$ is non-zero. View $f$ as a global section of the structure sheaf. Then by Proposition~\ref{prop:q-expansion}, there exist coefficients $c(Q) \in k$ for $Q \in \sD$, such that for almost all $A \in \detl$, $\psimod^*(f)$ admits a $q$-expansion $\sum_{Q \in \sD} c(Q) q^{\tr(AQ)}$.
  %where $Q$ ranges over thet set $\mathfrak{Q}$ of all symmetric half-integral positive semi-definite matrices. % \cite[Prop.~V.1.5]{faltings1990degeneration}.

  Let $S = \{ Q \in \sD :c(Q) \neq 0\}$. By Proposition~\ref{prop:unique-minimizer} applied to the set $S$, there is an open cone $C$ of $g \times g$ real, symmetric, positive definite matrices $A$ such that $\tr(AQ)$ is minimized by a unique $Q \in S$.

  By Lemma~\ref{lem:rggt-dense}, there exists an element of $C$ of the form $rGG^t$ with $r \in \R^+$ and $G \in \Sl_g(\Z[1/\ell])$. Since $C$ is a cone, we may scale this element to obtain a matrix $A \in \detl \cap C$. % If $A$ does not satisfy~\eqref{eq:q}, then by Proposition~\ref{prop:q-expansion}, we may replace $A$ with $\ell^m A$ for some positive integer $m$ until the latter formula applies. Observe that $\ell^m A \in C$ also.

  Let $n = \min \{\tr(AQ) \colon Q \in S\}$. By the definition of $C$, there is a unique $Q_0 \in S$ such that $\tr(AQ_0) = n$.
    Thus, the coefficient of $q^n$ in the $q$-expansion of $\psimod^*(f)$ is $c(Q_0)$. Since $Q_0 \in S$, we have $c(Q_0) \ne 0$. Hence $\psimod^*(f) \neq 0$, as desired.
\end{proof}

\begin{remark}
  In the case $k=\C$, we can give a simpler proof of Theorem~\ref{thm:curves-dense}, as follows. 
  By Lemma~\ref{lem:psi-A-n} and Proposition~\ref{prop:psimod-over-c}, if $A \in \detl$ then 
the map $\hh \to \hh_g$ defined by $\tau \mapsto \tau A$
induces the map
$\psimod: Y_0(\det(A)) \to \ag$.     
Let $$V = \{\tau A :  \tau\in\hh, A \in \detl \} \subseteq \hh_g.$$
%Let $V$ be the union of the images of these maps as $A$ varies. 
By Lemma~\ref{lem:rggt-dense}, the topological closure of $V$ in $\hh_g$ contains the purely imaginary locus, i.e., the set of matrices of the form $iB$ where $B$ is a real, symmetric, positive definite matrix. But any holomorphic function that vanishes on the purely imaginary locus must be the zero map. It follows that the image of $V$ in $\ag$ is Zariski dense, as desired.
\end{remark}

\section{Proof of Corollary~\ref{cor:agpol}}
\label{sec:Cor15proof}

Suppose that $g$ and $n$ are positive integers.
Let $\agn$ denote the moduli space for triples $(A,\lambda,L)$, where $A$ is a $g$-dimensional abelian variety, $\lambda$ is a principal polarization on $A$, and $L$ is a level $n$ structure, i.e., $L$ is a symplectic basis $(P_1,\ldots,P_g,Q_1,\ldots,Q_g)$ for the $n$-torsion $A[n]$.

 Define an equivalence relation $\medium$ on $\agn$ by $(A_1, \lambda_1, L_1) \medium (A_2, \lambda_2, L_2)$ if and only if $A_1 \weak A_2$ and there is an isomorphism $\phi: A_1 \to A_2$ such that $\phi(L_1) = L_2$. % If $k$ is an algebraically closed field that contains the base ring $R$, d
 Define a set $\mg(k)  \subseteq (\agn \times \agn)(k)$ by
\[
\mg(k) =  \{(x_1,x_2) \in \agn(k) \times \agn(k) \mid x_1 \medium x_2 \}. %  \subseteq (\agn \times \agn)(k)
\]
\begin{proposition}\label{prop:medium-isomorphic}
  If $g\in\Z_{\ge 2}$, % $R$ is a domain, 
  % $n$ is an integer
  % that is invertible in $R$, and
  $k$ is an algebraically closed field, and $n$ is an integer with $\car(k) \nmid n$, % that contains $R$,
  then the set $\mg(k)$
  is Zariski dense in the $k$-scheme $\agn \times \agn$.
\end{proposition}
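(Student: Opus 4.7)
The plan is to deduce Proposition~\ref{prop:medium-isomorphic} from Theorem~\ref{thm:sg-c-dense} by lifting the density statement along the natural map $\pi\times\pi: \agn\times\agn\to\ag\times\ag$. Since $\car(k)\nmid n$, this map is a finite \'etale Galois cover with group $G\times G$, where $G := \Sp_{2g}(\Z/n\Z)$. As \'etale surjective maps are open, Theorem~\ref{thm:sg-c-dense} gives that the preimage $\widetilde{\sg} := (\pi\times\pi)^{-1}(\sg(k))$ is Zariski dense in $\agn\times\agn$.

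The key step is to show that $\widetilde{\sg}$ is contained in the union of the finitely many $G\times G$-translates of $\mg(k)$, where the action is by changing level structures. Given $(x_1,x_2)\in\widetilde{\sg}$, so $A_1\approx A_2$, the aim is to produce $(h_1,h_2)\in G\times G$ together with an isomorphism $\phi: A_1\to A_2$ of unpolarized abelian varieties satisfying $\phi(h_1\cdot L_1)=h_2\cdot L_2$. Since $\{h_i\cdot L_i : h_i\in G\}$ is precisely the set of symplectic bases of $A_i[n]$ for the Weil pairing attached to $\lambda_i$, this reduces to exhibiting an isomorphism $\phi: A_1\to A_2$ such that $\phi^*\lambda_2\equiv\lambda_1\pmod n$ as homomorphisms $A_1\to A_1^\vee$.

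The hard part will be to verify this covering on a Zariski-dense subset of $\widetilde{\sg}$, since a generic unpolarized isomorphism $\phi$ need not preserve Weil pairings modulo $n$. After modifying $\phi$ by elements of $\Aut(A_1)$ and $\Aut(A_2)$, the problem becomes an orbit question for these groups acting on $\mathrm{NS}(A_1)/n\,\mathrm{NS}(A_1)$. This can be verified on the dense ``decomposable'' locus of $\sg$ built from products of (isogenous CM) elliptic curves that underlies the proof of Theorem~\ref{thm:Sg-dense}: for $A_1=A_2=E^g$ the rich automorphism group $\mathrm{GL}_g(\End(E))$ acts transitively enough on principal polarizations modulo $n$ to align the two polarizations, and the Kani-type arguments used in Proposition~\ref{prop:prod-equiv-torsor} produce explicit isomorphisms compatible with this alignment.

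Once the covering is established, the conclusion follows from irreducibility of $\agn\times\agn$: one has $\agn\times\agn=\overline{\widetilde{\sg}}\subseteq\bigcup_{(h_1,h_2)\in G\times G}(h_1,h_2)\cdot\overline{\mg(k)}$, a finite union of closed subsets, so at least one translate equals the whole scheme, and translating back yields $\overline{\mg(k)}=\agn\times\agn$, as desired.
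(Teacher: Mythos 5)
Your overall framework (show the preimage $\widetilde{\sg}:=(\pi\times\pi)^{-1}(\sg(k))$ is dense, cover it by finitely many $G\times G$-translates of $\mg(k)$, conclude by irreducibility) is sound, and the covering statement you need is equivalent to the containment $\sg(k)\subseteq(\pi\times\pi)(\mg(k))$. The paper uses that containment directly and more economically: it observes $(\pi\times\pi)(\mg(k))=\sg(k)$, and since $\pi\times\pi$ is finite the Zariski closure of $\mg(k)$ has the same dimension as that of $\sg(k)$, namely $2\dim\ag=2\dim\agn$ by Theorem~\ref{thm:sg-c-dense}; irreducibility of $\agn\times\agn$ then gives $\overline{\mg(k)}=\agn\times\agn$. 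Your route through openness of \'etale maps and $G\times G$-translates is not wrong, but it is more scaffolding than is needed, and strictly speaking the forgetful map between coarse spaces is ramified over the locus of extra automorphisms, so calling it \'etale needs a caveat (finiteness is what you actually use).

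The genuine gap is in what you yourself call the ``hard part.'' You propose to verify the covering on the decomposable locus of CM elliptic products, appealing to transitivity of $\Gl_g(\End(E))$ on principal polarizations mod $n$; this is not established and is not obviously true, and you have in fact chosen the locus where the question is \emph{hardest}, since that is exactly where automorphism groups are large and the N\'eron--Severi lattice is nontrivial. The cheap route, which is what lies behind the paper's one-line assertion, is to work instead on the dense open locus where $\Aut(A)=\{\pm1\}$: there, for any unpolarized isomorphism $\phi\colon A_1\to A_2$, the composite $\lambda_1^{-1}\circ\phi^*\lambda_2$ is an automorphism of $A_1$, hence $\pm1$, and positivity of polarizations forces it to equal $1$. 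Thus $\phi^*\lambda_2=\lambda_1$ on the nose, and any symplectic basis $L_1$ for $\lambda_1$ maps under $\phi$ to a symplectic basis $L_2=\phi(L_1)$ for $\lambda_2$, producing a point of $\mg(k)$ over the given point of $\sg(k)$. Since $\sg(k)$ stays dense after intersecting with this dense open set, that already suffices for both your argument and the paper's.
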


\begin{proof}
  Let $\pi: \agn \to \ag$ be the forgetful map $(A,\lambda,L) \mapsto (A,\lambda)$. Then $\pi$ is finite-to-one, and $(\pi\times\pi)(\mg(k)) = \sg(k)$. By Theorem~\ref{thm:sg-c-dense}, $\sg(k)$ is Zariski dense in $\ag \times \ag$, so the dimension of the Zariski closure of $\sg(k)$ is $2 \dim \ag$. Therefore the dimension of the Zariski closure of $\mg(k)$ is also $2 \dim \ag = 2 \dim \agn$. Since $\agn$ is connected, so is $\agn \times \agn$, and the desired result follows.
\end{proof}

If $D$ is a polarization type, define a set $\sgd(k)  \subseteq (\agd \times \agd)(k)$ by
\[
\sgd(k) =  \{((B_1,\mu_1), (B_2, \mu_2)) \in \agd(k) \times \agd(k) \mid B_1 \weak B_2 \}.
\]

\begin{proposition}
\label{prop:whatever}
  If $g\in\Z_{\ge 2}$, % $R$ is a domain, 
  $k$ is an algebraically closed field, $D=(d_1,\ldots,d_g)$ is a polarization type, and $\car(k) \nmid d_g$, % is invertible in $R$
  % , and $k$ is an algebraically closed field that contains $R$,
  then the set $\sgd(k)$
is Zariski dense in the $k$-scheme $\agd \times \agd$.
\end{proposition}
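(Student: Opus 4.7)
The plan is to construct a finite surjective morphism $\Psi: \agn \to \agd$ for an appropriate level $n$ such that $(\Psi \times \Psi)$ maps $\mg(k)$ into $\sgd(k)$, and then to transfer the density of $\mg(k)$ (Proposition~\ref{prop:medium-isomorphic}) to $\sgd(k)$ by a dimension count paralleling the proof of Proposition~\ref{prop:medium-isomorphic}. First I would fix an integer $n \ge 3$ with $d_g \mid n$ and $\gcd(n, \car(k)) = 1$, which is possible since $\car(k) \nmid d_g$. Given a $T$-point $(A, \lambda, L)$ of $\agn$, with symplectic level-$n$ basis $L = (P_1, \dots, P_g, Q_1, \dots, Q_g)$ for $A[n]$, I would set $K_L := \langle (n/d_1)P_1, \dots, (n/d_g)P_g \rangle$, an isotropic subgroup scheme of $A[n]$ isomorphic to $\prod_i \Z/d_i\Z$. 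Letting $\pi_L: A \to B_L := A/K_L$ be the quotient and $\hat\pi_L: B_L \to A$ its dual isogeny, I would define $\mu_L := \hat\pi_L^\vee \circ \lambda \circ \hat\pi_L$; a direct computation with kernels (using that $\car(k) \nmid d_g$) shows that $\mu_L$ is a polarization on $B_L$ of type $D$, so $\Psi(A, \lambda, L) := (B_L, \mu_L)$ defines a morphism $\Psi: \agn \to \agd$ of $k$-schemes.

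I would then verify that $\Psi$ is finite and surjective, and that $(\Psi \times \Psi)(\mg(k)) \subseteq \sgd(k)$. For surjectivity, given $(B, \mu) \in \agd(k)$, I would choose a maximal isotropic $L'' \subseteq \ker\mu$, set $A := B/L''$ with its induced principal polarization $\lambda$, and select any symplectic level-$n$ structure $L$ on $A$ whose associated subgroup $K_L$ equals the kernel of the dual isogeny $A \to B$; then $\Psi(A, \lambda, L) = (B, \mu)$. Finiteness follows because for each $(B, \mu)$ the possible $(L'', L)$ form a nonempty finite set. For the containment, if $((A_1, \lambda_1, L_1), (A_2, \lambda_2, L_2)) \in \mg(k)$, then by definition there is an isomorphism $\phi: A_1 \to A_2$ with $\phi(L_1) = L_2$; this forces $\phi(K_{L_1}) = K_{L_2}$, so $\phi$ descends to an isomorphism $B_{L_1} \to B_{L_2}$ of unpolarized abelian varieties, placing the image in $\sgd(k)$.

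Finally, arguing as in Proposition~\ref{prop:medium-isomorphic}: since $\mg(k)$ is Zariski dense in $\agn \times \agn$ and $\Psi \times \Psi$ is finite, the Zariski closure of $(\Psi \times \Psi)(\mg(k))$ has dimension $2 \dim \agn = 2 \dim \agd$; this closure is contained in $\overline{\sgd(k)}$, so since $\agd \times \agd$ is irreducible of that dimension we conclude $\overline{\sgd(k)} = \agd \times \agd$. The step I expect to take the most care is the verification that $\mu_L$ has polarization type exactly $D$ rather than merely a kernel of the correct order; this requires a careful computation with the Cartier dual of $K_L$ to show that the elementary divisors of $\ker\mu_L$ are $(d_1, d_1, \dots, d_g, d_g)$.
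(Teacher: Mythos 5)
Your overall strategy matches the paper's: both arguments push the dense set $\mg(k)$ forward along a finite morphism $\agn\to\agd$ coming from a moduli-theoretic construction that modifies the polarization, verify that the image lands in $\sgd(k)$ because weak isomorphism compatible with level structure descends to a weak isomorphism of the quotients, and finish by a dimension/irreducibility argument. The paper takes the morphism $\pi_D$ off the shelf from de~Jong's morphism of stacks (and invokes \cite[Prop.\ 1.11]{dejong} for irreducibility of $\agd$, which your dimension count also silently uses), whereas you attempt an explicit construction; your explicit version, as written, does not produce a polarization of type $D$.

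Concretely, the problem is with the degree of $\mu_L$. You set $K_L\cong\prod_i \Z/d_i\Z$, $\pi_L\colon A\to B_L:=A/K_L$, and $\mu_L:=\hat\pi_L^\vee\circ\lambda\circ\hat\pi_L$ for a ``dual isogeny'' $\hat\pi_L\colon B_L\to A$. Any isogeny $\hat\pi_L$ with $\hat\pi_L\circ\pi_L=[m]$ has degree $m^{2g}/\prod_i d_i$, so $\deg\mu_L=\bigl(m^{2g}/\prod_i d_i\bigr)^2$; this equals the required $(\prod_i d_i)^2$ only when $m^g=\prod_i d_i$. For example, with $g=2$ and $D=(1,2)$, taking $m=d_g=2$ gives $\deg\mu_L=64$ rather than $4$, so $\mu_L$ cannot be of type $D$. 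There is no natural isogeny $B_L\to A$ of degree $\prod_i d_i$ because $B_L=A/K_L$ is on the wrong side of the correspondence. The fix is to dualize: set $B:=\bigl(A/K_L\bigr)^\vee$, let $q\colon A\to A/K_L$ be the quotient, take $\phi:=\lambda^{-1}\circ q^\vee\colon B\to A$ (an isogeny of degree $\prod_i d_i$), and set $\mu:=\phi^\vee\circ\lambda\circ\phi$. Then $\deg\mu=(\prod_i d_i)^2$, and isotropy of $K_L$ gives $\ker\mu\cong\prod_i(\Z/d_i\Z)^2$, hence type $D$. This is exactly what the paper obtains from de~Jong's description ($B^\vee\cong A^\vee/H$ for a subgroup $H$ determined by the level structure). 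With that correction the rest of your argument (surjectivity, $\Psi(K_{L_1})$-compatibility under isomorphisms preserving level structure, and the dimension count) goes through, so the gap is localized but real: as stated, your $\Psi$ does not map to $\agd$.
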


\begin{proof}
Let $n = d_g$.
Let $\pi_D: \agn \to \agd$ be the morphism on moduli spaces induced by the morphism of stacks $f$ defined on the bottom of p.~492 of~\cite{dejong}.
Let $\xi$ be a geometric generic point for the Zariski closure of $(\pi_D \times \pi_D)(\mg(k))$ in $\agd \times \agd$.
Let $\xi'$ be a geometric generic point for $(\pi_D\times\pi_D)^{-1}(\overline{\xi})$, where $\overline{\xi}$ is the
Zariski closure of $\xi$.
Then $\mg(k) \subseteq (\pi_D\times\pi_D)^{-1}(\overline{\xi}) = \overline{\xi'}$.
By Proposition~\ref{prop:medium-isomorphic}, the Zariski closure of $\mg(k)$ is $\agn \times \agn$.
It follows that $\overline{\xi'} = \agn \times \agn$.
%is a geometric generic point for $\agn \times \agn$.
Since $\pi_D\times\pi_D$ is finite, we have
$$\dim(\overline{\xi}) = \dim(\overline{\xi'}) = \dim(\agn \times \agn) = \dim(\agd \times \agd).
$$
By~\cite[Prop. 1.11]{dejong}, $\agd$ is irreducible.
It follows that $\agd \times \agd = \overline{\xi}$, which is
%and hence $\xi$ must be a geometric generic point of $\agd \times \agd$.
 the Zariski closure of $(\pi_D \times \pi_D)(\mg(k))$.
 Thus, $(\pi_D \times \pi_D)(\mg(k))$ is Zariski dense in $\agd \times \agd$.

Suppose $(A_1, \lambda_1, L_1) \medium (A_2, \lambda_2, L_2)$ in $\agn$. Let $(B_i, \mu_i) = \pi_D(A_i, \lambda_i, L_i)$. From the definition of $\pi_D$, we have $B_i^\vee = A_i^\vee/H_i$ for some subgroup $H_i \subset A^\vee_i[n]$. By the definition of $\medium$, there is a weak isomorphism $A_2 \xrightarrow{\sim} A_1$ whose dual sends $H_1$ isomorphically onto $H_2$, and hence %$(B_1, \mu_1) \weak (B_2, \mu_2)$. Therefore
  $B_1^\vee \weak B_2^\vee$. It follows that $B_1 \weak B_2$, and therefore
%  \[
    $(\pi_D\times\pi_D)(\mg(k)) \subseteq \sgd(k).$
%  \]
The desired result now follows.
\end{proof}
The proof of Corollary~\ref{cor:agpol} is now the same as the proof of Theorem~\ref{thm:invariant-c-constant}, replacing the use of Theorem \ref{thm:sg-c-dense} with the use of Proposition \ref{prop:whatever}.

\bibliographystyle{halpha}
\bibliography{./references}

\end{document}